\newtheorem{theorem}{Theorem}
\theoremstyle{plain}
\newtheorem{definition}{Definition}
\newtheorem{lemma}{Lemma}
\newtheorem{proposition}{Proposition}
\numberwithin{equation}{section}
\newcommand{\naturals}{\mathbb{N}}
\newcommand{\reals}{\mathbb{R}}
\newcommand{\complexes}{\mathbb{C}}
\newcommand{\integers}{\mathbb{Z}}
\newcommand{\spec}[1]{\operatorname{\sigma}{\left(#1\right)}}
\newcommand{\proj}[1]{\mathsf{P}_{#1}}
\newcommand{\tr}[1]{\operatorname{tr}{#1}}
\newcommand{\id}[1]{\mathrm{id}_{#1}}
\newcommand{\comm}[2]{[#1,#2]}
\newcommand{\acomm}[2]{\{#1,#2\}}
\newcommand{\adj}{\prime}
\newcommand{\hadj}{*}
\newcommand{\iprod}[2]{\left\langle #1, #2 \right\rangle}
\newcommand{\hsiprod}[2]{\left\langle #1, #2 \right\rangle_{2}}
\newcommand{\tnorm}[1]{\left\| #1 \right\|_{1}}
\newcommand{\hsnorm}[1]{\left\| #1 \right\|_{2}}
\newcommand{\im}[1]{\operatorname{im}{#1}}
\newcommand{\conv}[1]{\operatorname{Conv}{\left(#1\right)}}
\newcommand{\matr}[1]{\mathbb{M}_{#1}(\complexes)}
\newcommand{\cp}[2]{\operatorname{\mathsf{CP}}(#1,#2)}
\newcommand{\cpe}[1]{\operatorname{\mathsf{CP}}(#1)}
\newcommand{\cocp}[2]{\operatorname{\mathsf{coCP}}(#1,#2)}
\newcommand{\cocpe}[1]{\operatorname{\mathsf{coCP}}(#1)}
\newcommand{\dec}[2]{\operatorname{\mathsf{D}}(#1,#2)}
\newcommand{\dece}[1]{\operatorname{\mathsf{D}}(#1)}
\newcommand{\transpose}{\mathsf{T}}
\renewcommand{\transpose}{\tau}
\begin{document}
\title[Covariant decomposable maps on C*-algebras...]{Covariant decomposable maps on C*-algebras and quantum dynamics}
\author{Krzysztof Szczygielski}
\address[K. Szczygielski]
{Institute of Theoretical Physics and Astrophysics, Faculty of Mathematics, Physics and Informatics, University of Gda\'nsk, Wita Stwosza 57, 80-308 Gda\'nsk, Poland}%
\email[K. Szczygielski]{krzysztof.szczygielski@ug.edu.pl}

\begin{abstract}
We characterize covariant positive decomposable maps between unital C*-algebras in terms of a dilation theorem, which generalizes a seminal result by H.~Scutaru from Rep.~Math.~Phys. \textbf{16} (1):79--87, 1979. As a case study, we provide a certain characterization of the operator sum representation of maps on $\matr{n}$, covariant with respect to the maximal commutative subgroup of \texorpdfstring{$\mathrm{U}(n)$}{U(n)}. A connection to quantum dynamics is established by specifying sufficient and necessary conditions for covariance of D-divisible (decomposably divisible) quantum evolution families, recently introduced in J.~Phys.~A: Math.~Theor.~\textbf{56} (2023) 485202.
\end{abstract}

\maketitle

\section{Introduction}

Covariant positive linear maps gained a lot of attention due to their significance in quantum physics, where they serve as a theoretical framework for modeling quantum channels, or quantum evolutions, of systems with high level of internal symmetry. For instance, covariance appears in quite a natural manner in the widely used Davies approach to Markovian semigroups \cite{Davies1974,Davies1976} and to their generalizations in the weak coupling limit regime (see e.g.~\cite{Szczygielski2013,Szczygielski2014}). Covariant quantum channels found numerous applications in open quantum systems theory, quantum metrology, quantum optics and notably in quantum information. For example, they were employed for characterization of entanglement in spin systems, description of spin chains, modeling the evolution in presence of processes of pure dephasing, absorption and emission, description of spin-boson model and many others; see e.g.~\cite{Siudzinska2022,Siudzinska2023} and references therein for a more comprehensive summary. The algebraic framework of covariant maps was established by H.~Scutaru \cite{Scutaru1979} who characterized completely positive covariant maps from a C*-algebra into $B(H)$, $H$ being a Hilbert space, by providing an explicit construction in the spirit of Stinespring \cite{Stinespring_1955}. An important result by A.~Holevo related covariance of completely positive unital semigroups to properties of their generators \cite{Holevo1993,Holevo1996}.

The article is structured as follows. In Section \ref{sec:Preliminaries} we provide some necessary mathematical background, which includes Frobenius Hermitian basis in $\matr{n}$, transposition map on general $B(H)$ and elementary notions of various classes of positive maps on algebras. This is then followed by the core part of the article, namely Sections \ref{sec:CovariantDecMaps} and \ref{sec:Ddiv}, which contain all main results. In particular, in Section \ref{sec:CovariantDecMaps} we extend results of Scutaru \cite{Scutaru1979} onto the more general class of \emph{decomposable} positive linear maps, in the spirit of Stinespring and St{\o}rmer; this is formulated in a form of Theorems \ref{thm:MainResult1} and \ref{thm:MainResult2}. As a kind of a case study, in Section \ref{sec:MapsOnMn} we focus on a particular case of maps covariant with respect to the so-called maximal commutative subgroup of $\mathrm{U}(n)$ and provide characterization for their operator sum representation (Proposition \ref{prop:MapHPCPcoCP}, Theorems \ref{thm:PhiUconjcovariant} and \ref{thm:MaxCovDec}). The second core part, Section \ref{sec:Ddiv}, is then devoted to quantum dynamics and it mainly focuses on recently introduced \emph{D-divisible} (decomposably divisible) quantum evolution families \cite{Szczygielski2023,Siudzinska2025}. In particular, a characterization of time-local regular generators of covariant, D-divisible evolution families is provided (Proposition \ref{prop:LcovDec}, Theorems \ref{thm:SemigroupStructureTheorem} and \ref{thm:DdivisibleStructureTheorem}) which can be seen as a generalization of result by Holevo \cite{Holevo1993} applied for C*-algebras of square matrices.

\section{Preliminaries}
\label{sec:Preliminaries}

Here, we briefly outline most basic notions necessary for formulating further results, including elementary facts and constructs of a theory of positive maps on algebras.

\subsection{Notation}

Throughout the article we will use a common notation. $B(X,Y)$ (resp.~$B(X)$) will be the Banach space of bounded linear maps between normed spaces $X$ and $Y$ (resp.~on $X$). $\matr{n}$ will be the algebra of complex square matrices of size $n$. For $A\in B(H)$, $H$ a Hilbert space, we denote with $A^\hadj$ the Hermitian conjugate of $A$ and with $\tnorm{A} = \tr{\sqrt{A^\hadj A}}$ and $\hsnorm{A} = \sqrt{\tr{A^\hadj A}}$ the \emph{trace norm} and \emph{Frobenius (Hilbert-Schmidt) norm}, respectively, with the latter induced by Frobenius (Hilbert-Schmidt) inner product $\hsiprod{A}{B} = \tr{A^\hadj B}$. Space $B(H)$ with its standard operator (spectral) norm is a C*-algebra (and so is $\matr{n}$), and becomes a Banach *-algebra when either $\tnorm{\cdot}$ or $\hsnorm{\cdot}$ is taken instead. For any linear operator $A$, symbol $\spec{A}$ will denote the spectrum of $A$. For a set $S$ in an algebra $\mathscr{A}$ we denote with $S^\prime$ the commutant of $S$.

\subsection{Algebra \texorpdfstring{$\matr{n}$}{of matrices} and Frobenius basis}
\label{sec:FrobeniusBasis}

In Sections \ref{sec:MapsOnMn} and \ref{sec:DdivisibleFamilies}, devoted to finite-dimensional C*-algebras, we will make an extensive use of Frobenius-orthonormal bases spanning $\matr{n}$. In particular, two of them will be of importance: the \emph{canonical} one, $\{E_{ij}\}_{i,j=1}^{n}$ for $E_{ij}$ containing $1$ at place $(i,j)$ and zeros elsewhere, and \emph{Hermitian} one, $\{F_i\}_{i=1}^{n^2}$, consisting of matrices $F_i$ subject to
\begin{equation}\label{eq:FbasisProp}
F_i= F_{i}^{\hadj}, \quad \tr{F_i F_j} = \delta_{ij}, \quad F_{n^2} = \frac{1}{\sqrt{n}}I, \quad \tr{F_i} = \delta_{i n^2}.
\end{equation}
We choose such basis in a following, common way: matrices $F_i$ for $1 \leqslant i \leqslant \binom{n}{2}$ are chosen \emph{symmetric}, for $1+\binom{n}{2} \leqslant i \leqslant n(n-1)$ \emph{antisymmetric}, and for $1+n(n-1) \leqslant i \leqslant n^2$ \emph{diagonal}; we will sometimes denote them with $F_{i}^{\mathrm{s}}$, $F_{i}^{\mathrm{a}}$ and $F_{i}^{\mathrm{d}}$, respectively. They span mutually orthogonal subspaces in $\matr{n}$ of all symmetric, antisymmetric and diagonal matrices which we will respectively denote $\mathrm{Sym}_{n}(\complexes)$, $\mathrm{Asym}_{n}(\complexes)$ and $\mathrm{Diag}_{n}(\complexes)$. General structure of $F_i$ is the following \cite{Hioe1981,Kimura2003}:
\begin{itemize}
	\item when $F_i$ is either \emph{symmetric} or \emph{antisymmetric}:
	\begin{equation}\label{eq:FiSymAsym}
		F_{i}^{\mathrm{s}} = \frac{1}{\sqrt{2}}(E_{\mu\nu} + E_{\nu\mu}) \quad \text{or} \quad F_{i}^{\mathrm{a}} = -\frac{i}{\sqrt{2}}(E_{\mu\nu} - E_{\nu\mu})
	\end{equation}
	for some pair $(\mu,\nu)\in \{1, ... ,n\}^2$ uniquely determined by $i$, in symmetric or antisymmetric case, respectively;
	\item when $F_i$ is chosen \emph{diagonal}: first, define matrix $K_k$ as
	\begin{equation}\label{eq:FiDiag}
		K_k = \frac{1}{\sqrt{k(k+1)}}\left( \sum_{j=1}^{k}E_{jj} - kE_{k+1,k+1} \right)
	\end{equation}
	for $k \in \{1, ... , n-1\}$ and set $F_{i}^{\mathrm{d}} = K_{i-n(n-1)}$, $F_{n^2} = \frac{1}{\sqrt{n}}I$.
\end{itemize}
Such constructed basis naturally satisfies properties \eqref{eq:FbasisProp} and will be referred to simply as the \emph{Frobenius basis}. All $F_{i}^{\mathrm{s}}$, $F_{i}^{\mathrm{a}}$ are off-diagonal and have only two non-zero matrix elements, while $F_{i}^{\mathrm{d}}$ are of zero trace, except for $F_{n^2}$. Frobenius bases so constructed may be then regarded as natural generalizations of (normalized) Pauli and Gell-Mann matrices, which respectively span $\matr{2}$ and $\matr{3}$.

\subsection{Transposition on \texorpdfstring{$B(H)$}{B(H)}}
\label{sec:Transposition}

Let a complex Hilbert space $H$ be spanned by basis ${\{e_i : i\in \mathcal{I}\}}$ for some set $\mathcal{I}$ of indices (possibly uncountable). Let $J_H : H\to H$ be an antilinear isometric involutive isomorphism given via
\begin{equation}\label{eq:Jh}
	J_H x = \sum_{i\in\mathcal{I}}\overline{\iprod{e_i}{x}}e_i ,
\end{equation}
i.e.~a \emph{complex conjugation} of $x$ in basis $\{e_i\}$; naturally, one has $J_{H}^{2} x = x$. For any $A \in B(H)$, the linear operator
\begin{equation}\label{eq:TransposeDef}
	\transpose (A) = A^\transpose : H \to H, \quad \transpose (A) = J_H  A^\hadj J_H
\end{equation}
will be called the \emph{transpose of $A$} \cite{Labuschagne2006}. Every Hilbert space $H$ is isometrically anti-isomorphic to its dual $H^*$, i.e.~there exist antilinear isometry $\eta : H \to H^{*}$ given by $\eta(y) = \iprod{y}{\cdot}$. Then, $\eta$ gives rise to the \emph{Hermitian adjoint} $A^\hadj \in B(H)$ defined by $\eta(y)\circ A = \eta(A^\hadj y)$, or simply $\iprod{x}{Ay} = \iprod{A^\hadj x}{y}$, for all $x,y \in H$. Naturally, map $\eta(y)\mapsto \eta(A^\hadj y)$ is a transformation of a functional $\eta(y)$ in $H^*$. Then, a mapping $\zeta = \eta\circ J_H$, $\zeta (y) = \iprod{J_H y}{\cdot}$ can be checked to be a \emph{linear} isometry between $H$ and $H^*$, which in turn gives rise to the \emph{transpose} of $A$: indeed, taking $A : H\to H$, any $x,y \in H$ and utilizing involutiveness of $J_H$,
\begin{align}
\zeta (y)(Ax) &= \iprod{J_H y}{Ax} = \iprod{J_H J_H A^\hadj J_H y}{x} \\
&= \zeta(J_H A^\hadj J_H y)(x) \nonumber = \zeta (A^\transpose y)(x).\nonumber
\end{align}
More generally, let $H_1$, $H_2$ be Hilbert spaces endowed respectively with complex conjugations $J_1$, $J_2$ (in some chosen bases), antilinear isometries $\eta_1 , \eta_2$, and linear ones $\zeta_1$, $\zeta_2$, all defined as above. Let $A \in B(H_1, H_2)$. Then, $A^\hadj : H_{2}\to H_{1}$ satisfies $\eta_2(y)\circ A = \eta_1 (A^\hadj y)$, $y \in H_2$, while the \emph{transpose} must obey $\zeta_2(y)\circ A = \zeta_1 (A^\transpose y)$. Then, a simple computation shows that
\begin{equation}
\zeta_2(y)(Ax) = \zeta_1(J_1 A^\hadj J_2 y)(x),
\end{equation}
and so the transpose of $A : H_1 \to H_2$ is
\begin{equation}
A^\tau : H_2 \to H_1, \quad A^\tau = J_1 A^\hadj J_2 .
\end{equation}
Both $A^\hadj$ and $A^\transpose$ can be then interpreted as (representations of) liftings of operator $A$ to maps between dual spaces. For finite-dimensional spaces, $\transpose$ will simply mean the matrix transposition.

By analogy to matrix algebras, let us define the \emph{conjugate} $\overline{A}$ of operator $A : H_1 \to H_2$ by
\begin{equation}
\overline{A} : H_1 \to H_2, \quad \overline{A} = (A^\hadj)^\tau.
\end{equation}
A simple exercise is to show validity of the above lemma:

\begin{lemma}
\label{lemma:BarTransProperties}
Let $A \in B(H_1, H_2)$, $B \in B(H_2, H_3)$ for $H_1$, $H_2$, $H_3$ Hilbert spaces. The following statements hold:
\begin{enumerate}
	\item $\overline{A} = J_2 A J_1 = (A^\hadj)^\transpose = (A^\transpose)^\hadj$,
	\item $(AB)^\transpose = B^\transpose A^\transpose$, $\overline{AB} = \overline{A} \,\overline{B}$,
	\item $(A^\transpose)^\transpose = A$, $\overline{\overline{A}} = A$,
	\item $(J_2 A J_1)^\hadj = J_1 A^\hadj J_2$, and $(J_2 A J_1)^{-1} = J_1 A^{-1} J_2$ for $A$ invertible.
\end{enumerate}
\end{lemma}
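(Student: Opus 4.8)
The plan is to verify each of the four items by unwinding the definitions $A^\transpose = J_1 A^\hadj J_2$ and $\overline{A} = (A^\hadj)^\transpose$, together with the basic facts that each $J_k$ is antilinear, isometric, involutive (so $J_k^{-1} = J_k$), and that Hermitian conjugation is an involutive antihomomorphism. The only genuinely delicate point is bookkeeping the domains and codomains: $A\in B(H_1,H_2)$ gives $A^\hadj\in B(H_2,H_1)$, hence $A^\transpose = J_1 A^\hadj J_2\in B(H_2,H_1)$, and $\overline{A} = (A^\hadj)^\transpose$ must be read as the transpose of $A^\hadj\in B(H_2,H_1)$, which by the definition of $\transpose$ is $J_2 (A^\hadj)^\hadj J_1 = J_2 A J_1\in B(H_1,H_2)$. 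This computation already establishes the first equality of item (1).

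For the remaining equalities in item (1), I would compute $(A^\transpose)^\hadj = (J_1 A^\hadj J_2)^\hadj$. Here I need the elementary fact that for an antilinear isometric involution $J$ one has $J^\hadj = J$ in the appropriate sense, or more directly that $(J_1 B J_2)^\hadj = J_2 B^\hadj J_1$ for linear $B$ — this is exactly item (4), so I would prove (4) first. Item (4) itself follows from the defining relation $\iprod{x}{(J_2 A J_1) y} = \overline{\iprod{J_2 x}{A J_1 y}} = \overline{\iprod{A^\hadj J_2 x}{J_1 y}} = \iprod{J_1 A^\hadj J_2 x}{y}$, where the first and last steps use antilinearity/isometry of the $J$'s and the middle step uses the definition of $A^\hadj$; the inverse formula is immediate from $J_k^{-1}=J_k$ and $(J_2 A J_1)(J_1 A^{-1} J_2) = J_2 A A^{-1} J_2 = J_2 J_2 = \id{H_2}$. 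With (4) in hand, $(A^\transpose)^\hadj = (J_1 A^\hadj J_2)^\hadj = J_2 (A^\hadj)^\hadj J_1 = J_2 A J_1 = \overline{A}$, closing item (1).

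Items (2) and (3) are then short. For (3): $(A^\transpose)^\transpose = J_2 (J_1 A^\hadj J_2)^\hadj J_1 = J_2 J_2 A J_1 J_1 = A$ by (4) and involutiveness, and $\overline{\overline{A}} = J_2 (J_2 A J_1) J_1 = A$ directly from the formula $\overline{A} = J_2 A J_1$ in (1). For (2): using $\overline{A} = J_2 A J_1$, $\overline{B} = J_3 B J_2$, we get $\overline{A}\,\overline{B}$... but one must be careful about composition order versus the domain arrows; with $A\in B(H_1,H_2)$ and $B\in B(H_2,H_3)$, the composable product is $BA\in B(H_1,H_3)$, so I would state (2) as $(BA)^\transpose = A^\transpose B^\transpose$ and $\overline{BA} = \overline{B}\,\overline{A}$ — or, if the paper's convention writes $AB$ for $A$ acting after $B$, interpret the displayed identities accordingly. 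In either reading the proof is the same: $\overline{BA} = J_3 (BA) J_1 = (J_3 B J_2)(J_2 A J_1) = \overline{B}\,\overline{A}$, and $(BA)^\transpose = ((BA)^\hadj)$-conjugated appropriately, or more cleanly $(BA)^\transpose = \bigl(\overline{BA}\bigr)^\hadj = (\overline{B}\,\overline{A})^\hadj = \overline{A}^\hadj\,\overline{B}^\hadj = A^\transpose B^\transpose$ using item (1). The main (mild) obstacle throughout is purely notational: keeping the three Hilbert spaces and the direction of every arrow straight so that each juxtaposition $J_i \cdot J_j$ is legitimately composable; once the domains are tracked carefully, every identity reduces to antilinearity, isometry, involutiveness of the $J$'s and the involutive-antihomomorphism property of $\hadj$.
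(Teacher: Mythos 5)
Your proof is correct and is exactly the verification the paper intends: the lemma is stated there as ``a simple exercise'' with no written proof, and your argument (reduce everything to the formula $\overline{A}=J_2 A J_1$, prove item (4) from the defining relation of the adjoint and the antilinear isometry property $\iprod{J u}{J v}=\overline{\iprod{u}{v}}$, then derive (1)--(3) by composition) is the standard one. Your remark on item (2) is also a legitimate catch: with $A\in B(H_1,H_2)$ and $B\in B(H_2,H_3)$ the composable product is $BA$, so the identities should read $(BA)^\transpose=A^\transpose B^\transpose$ and $\overline{BA}=\overline{B}\,\overline{A}$ (equivalently, the hypotheses on $A$ and $B$ should be interchanged), and your proof of that corrected statement is valid.
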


\subsection{Decomposable maps}
\label{sec:DecMaps}

Let $\mathscr{A}$, $\mathscr{B}$ be partially ordered, unital C*-algebras and denote $\mathscr{A}^+$, $\mathscr{B}^+$ the convex cones of \emph{positive elements} in $\mathscr{A}$ and $\mathscr{B}$, respectively. A bounded linear map $\phi : \mathscr{A}\to\mathscr{B}$ is called \cite{Stoermer2013}:

\begin{itemize}
	\item \emph{positive} if $\phi(\mathscr{A}^+)\subseteq \mathscr{B}^+$, i.e.~when it is order preserving,
	\item \emph{$k$-positive} if its extension $\phi_k : \operatorname{\mathbb{M}}_{k}(\mathscr{A})\to\mathbb{M}_{k}(\mathscr{B})$ defined by
\begin{equation}
	\phi_k\left([a_{ij}]_{i,j=1}^{k}\right) = \left[\phi(a_{ij})\right]_{i,j=1}^{k}, \quad a_{ij}\in\mathscr{A}
\end{equation}
is positive for given $k\in\naturals$ ($1$-positive map is simply positive), and
\item \emph{completely positive} (CP) if it is $k$-positive for all $k\in\naturals$.
\end{itemize}

Let us now take $\mathscr{B} = B(H)$ and let $\transpose$ be the transposition, $\transpose (A) = A^\transpose = J_H A^\hadj J_H$ as in Section \ref{sec:Transposition}. Then, a linear map $\phi : \mathscr{A}\to B(H)$ is additionally called:
\begin{itemize}
	\item \emph{$k$-copositive} if $\transpose\circ\phi$ is $k$-positive, and
	\item \emph{completely copositive} (coCP) if $\transpose\circ\phi$ is CP.
\end{itemize}

   We will denote sets of all CP and coCP maps from $\mathscr{A}$ to $B(H)$ by $\cp{\mathscr{A}}{H}$ and $\cocp{\mathscr{A}}{H}$, respectively. Both these are pointed convex cones in $B(\mathscr{A},B(H))$, closed with respect to the BW-topology (for a comprehensive coverage of various subclasses of positive maps on C*-algebras see e.g.~\cite{Stoermer2013}). Finally, a positive linear map $\varphi : \mathscr{A}\to B(H)$ is called \emph{decomposable} if there exist maps $\phi_1 \in \cp{\mathscr{A}}{H}$ and $\phi_2\in\cocp{\mathscr{A}}{H}$ such that
\begin{equation}\label{eq:DecomposableMap}
	\varphi = \phi_1 + \phi_2 .
\end{equation}
The set $\dec{\mathscr{A}}{H}$ of all decomposable maps is then the \emph{Minkowski sum} of cones of CP and coCP maps,
\begin{align}
	\dec{\mathscr{A}}{H} &= \cp{\mathscr{A}}{H} + \cocp{\mathscr{A}}{H} \\
	&= \conv{\cp{\mathscr{A}}{H}\cup \cocp{\mathscr{A}}{H}},\nonumber
\end{align}
where $\conv{A}$ is the \emph{convex hull of set $A$}, i.e.~the smallest convex set containing $A$. Hence, $\dec{\mathscr{A}}{H}$ is also a BW-closed pointed convex cone, and moreover it is an example of a \emph{mapping cone}, i.e.~is algebraically closed with respect to compositions with CP maps, from left and right. For maps on $\matr{n}$ we will employ a simplified, self-explanatory notation $\cpe{\matr{n}}$, $\cocpe{\matr{n}}$ and $\dece{\matr{n}}$.

Decomposable maps are characterized in terms of a dilation theorem by St{\o}rmer \cite{Stoermer2013,Stormer_1982}: for every $\varphi\in\dec{\mathscr{A}}{H}$ there exists a Hilbert space $K$, a bounded linear map $V : H\to K$ and a Jordan morphism $\pi : \mathscr{A}\to B(K)$ such that
\begin{equation}\label{eq:StoermerDilation}
	\varphi (a) = V^\hadj \pi(a) V
\end{equation}
for $a\in\mathscr{A}$. St{\o}rmer's result is a generalization of a celebrated Stinespring's dilation theorem \cite{Stinespring_1955} which characterizes, in a similar manner, all CP maps: any $\phi$ which is CP enjoys the factorized form \eqref{eq:StoermerDilation}, however with a *-homomorphism replacing Jordan morphism. Similarly, any coCP map is also in a form \eqref{eq:StoermerDilation}, but with $\pi$ being a *-antihomomorphism. Despite an enormous effort of many mathematicians the exact structure of cones of not only positive, but also decomposable maps is still far from being well-understood, even in finite-dimensional case, with one notable exception: when $\mathscr{A} = \matr{n}$, $\mathscr{B}=\matr{m}$ and $mn\leqslant 6$, then every positive map $\phi : \mathscr{A}\to\mathscr{B}$ is automatically decomposable, as was shown by Woronowicz \cite{Woronowicz1976}. In higher-dimensional algebras this is however not the case since already for $n=m=3$ there exist positive, indecomposable maps (i.e.~not in a form \eqref{eq:DecomposableMap}), as first demonstrated by Choi \cite{Choi1980} and then explored in literature.

\section{Covariant decomposable maps}
\label{sec:CovariantDecMaps}

In this section, we will focus on some aspects of covariant linear maps, which are further assumed to be decomposable in the sense of Section \ref{sec:DecMaps}. Main results of this part are presented in a form of Theorems \ref{thm:MainResult1} and \ref{thm:MainResult2} below. We divide our results into three major subsections, where the first one concerns maps on general C*-algebras and the second one is devoted to finite-dimensional case and focused on operator sum representation with respect to Frobenius Hermitian basis.

\subsection{Maps on general C*-algebras}

Let $\mathcal{G}$ be a compact Lie group and let $\phi : \mathscr{A}\to\mathscr{B}$ be a linear map between unital C*-algebras. For any $g\in\mathcal{G}$, we denote with $g(\cdot)$ the \emph{action} of $\mathcal{G}$ on $\mathscr{A}$, i.e.~we treat $g$ as a *-automorphism on $\mathscr{A}$. For $a$, $b$ in any C*-algebra we denote $\mathrm{Ad}_{a}(b) = aba^\hadj$.

\begin{definition}
Let $U : \mathcal{G}\to\mathscr{B}$ be a unitary representation of $\mathcal{G}$. If it happens that
\begin{equation}
	\phi\circ g = \mathrm{Ad}_{U(g)}\circ\phi ,
\end{equation}
or, explicitly,
\begin{equation}
	\phi (g(a)) = U(g) \phi(a) U(g)^{-1}
\end{equation}
for all $g\in \mathcal{G}$, $a\in\mathscr{A}$, then $\phi$ will be called \emph{$U$-covariant}.
\end{definition}

Let us now take $\mathscr{B} = B(H)$. For a chosen representation $U : \mathcal{G}\to B(H)$ denote by $\mathcal{C}_U$ a norm-closed linear subspace in $B(\mathscr{A},B(H))$ of all $U$-covariant maps (see Theorem \ref{thm:CuNormClosed} in the Appendix \ref{app:SideTheorems}).

\begin{theorem}\label{thm:MainResultDec}
A linear map $\varphi \in \dec{\mathscr{A}}{H}$ is $U$-covariant if and only if there exist $U$-covariant maps $\phi_1 \in \cp{\mathscr{A}}{H}$ and $\phi_2 \in \cocp{\mathscr{A}}{H}$ such that $\varphi = \phi_1 + \phi_2$.
\end{theorem}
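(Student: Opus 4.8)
The plan is to prove the nontrivial direction — that a $U$-covariant decomposable map admits a $U$-covariant decomposition — by an averaging argument over the compact group $\mathcal{G}$, using the normalized Haar measure $dg$. Start from an arbitrary decomposition $\varphi = \psi_1 + \psi_2$ with $\psi_1 \in \cp{\mathscr{A}}{H}$ and $\psi_2 \in \cocp{\mathscr{A}}{H}$, guaranteed by the definition of decomposability. For each $g \in \mathcal{G}$ define the ``twisted'' maps
\begin{equation}
\psi_i^{g}(a) = U(g)^{-1}\,\psi_i\bigl(g(a)\bigr)\,U(g), \quad a \in \mathscr{A},\ i = 1,2,
\end{equation}
and set $\phi_i = \int_{\mathcal{G}} \psi_i^{g}\, dg$, the Bochner (or weak/BW) integral. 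The converse direction is immediate: if $\varphi = \phi_1 + \phi_2$ with both summands $U$-covariant, then $\varphi$ is $U$-covariant since $\mathcal{C}_U$ is a linear space.

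The key steps, in order, are as follows. First I would check that each $\psi_i^{g}$ stays in the correct cone: since $g(\cdot)$ is a $*$-automorphism of $\mathscr{A}$ and $\mathrm{Ad}_{U(g)^{-1}}$ is a unitary conjugation on $B(H)$, both operations are CP and preserve the coCP property (because conjugation by a unitary commutes with $\transpose$ up to conjugation by another unitary — more precisely $\transpose \circ \mathrm{Ad}_{U(g)^{-1}} = \mathrm{Ad}_{\overline{U(g)}^{-1}} \circ \transpose$ using Lemma \ref{lemma:BarTransProperties}), so $\psi_1^{g} \in \cp{\mathscr{A}}{H}$ and $\psi_2^{g} \in \cocp{\mathscr{A}}{H}$ for every $g$. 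Second, I would verify that the integrals $\phi_i$ are well-defined: the map $g \mapsto \psi_i^{g}(a)$ is continuous (joint continuity of the group action, strong continuity of $U$, and boundedness) for each fixed $a$, $\mathcal{G}$ is compact, and both cones $\cp{\mathscr{A}}{H}$ and $\cocp{\mathscr{A}}{H}$ are BW-closed and convex — hence closed under averaging against a probability measure — so $\phi_1 \in \cp{\mathscr{A}}{H}$ and $\phi_2 \in \cocp{\mathscr{A}}{H}$. Third, $\phi_1 + \phi_2 = \varphi$ because $\psi_1^{g} + \psi_2^{g} = \varphi^{g}$, and $\varphi^{g} = \varphi$ for all $g$ by the assumed $U$-covariance of $\varphi$, so the integrand is the constant $\varphi$ and integrates (against a normalized measure) to $\varphi$. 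Fourth, each $\phi_i$ is $U$-covariant: for $h \in \mathcal{G}$, a change of variables $g \mapsto hg$ in the Haar integral together with the representation property $U(hg) = U(h)U(g)$ and the cocycle-like identity for the twist gives $\phi_i \circ h = \mathrm{Ad}_{U(h)} \circ \phi_i$; this is the standard invariance-of-Haar-measure computation.

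The main obstacle — though a mild one — is the careful bookkeeping in the fourth step, namely showing $U(h)\,\phi_i(a)\,U(h)^{-1} = \phi_i(h(a))$. One must expand $\phi_i(h(a)) = \int_{\mathcal{G}} U(g)^{-1} \psi_i(g(h(a))) U(g)\, dg$, recognize $g(h(a)) = (gh)(a)$ since the action is a group homomorphism into $\mathrm{Aut}(\mathscr{A})$, substitute $g' = gh$ (left-invariance of Haar measure), and then rewrite $U(g)^{-1} = U(g'h^{-1})^{-1} = U(h) U(g')^{-1}$, pulling the $U(h)$ and $U(h)^{-1}$ outside the integral. A secondary technical point worth stating explicitly is the interchange of $\transpose$ with the unitary conjugation and with the integral — the former follows from Lemma \ref{lemma:BarTransProperties} (conjugation by $U(g)$ transposes to conjugation by $\overline{U(g)}$, which is again unitary, so the coCP cone is stable), and the latter from BW-closedness of the cone $\cocp{\mathscr{A}}{H}$ rather than any delicate measure-theoretic commutation. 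No deep input beyond compactness of $\mathcal{G}$, existence of Haar measure, and the closedness/convexity of the relevant mapping cones is required.
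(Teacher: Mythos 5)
Your proof is correct and takes essentially the same route as the paper: both average an arbitrary decomposition $\varphi=\psi_1+\psi_2$ over $\mathcal{G}$ with normalized Haar measure (your twist $\mathrm{Ad}_{U(g)^{-1}}\circ\psi_i\circ g$ is the paper's $\mathrm{Ad}_{U(g)}\circ\psi_i\circ g^{-1}$ after the inversion-invariant substitution $g\mapsto g^{-1}$), and the covariance of the averages follows from the same change-of-variables computation. The only real difference is how stability of the cones under averaging is justified: the paper proves this via a direct-integral Stinespring dilation (Theorem \ref{prop:ConesInvarianceP} in the appendix), whereas you argue that each twisted map lies in the relevant cone pointwise (using $\transpose\circ\mathrm{Ad}_{U(g)^{-1}}=\mathrm{Ad}_{\overline{U(g)}^{-1}}\circ\transpose$ for the coCP case) and then invoke BW-closedness and convexity of the cone, which is a somewhat more elementary and equally valid justification.
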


\begin{proof}
Only the ``if'' part needs to be addressed. Since $\mathcal{G}$ is compact, it is amenable i.e.~it admits a unique translation invariant mean with respect to Haar measure $\mu$. Define a linear operator $\proj{U}$ on $B(\mathscr{A}, B(H))$ as a Bochner integral
\begin{equation}
	\proj{U}(\phi) = \int_{\mathcal{G}} \mathrm{Ad}_{U(g)}\circ\phi \circ g^{-1} \, d\mu(g).
\end{equation}
It is not hard to see that in fact $\proj{U}$ is a projection onto the subspace of $U$-covariant maps: indeed, for any $\phi$ and any $g\in\mathcal{G}$ we have, by translation invariance,
\begin{align}
	\proj{U}(\phi) \circ g &= \int_{\mathcal{G}} \mathrm{Ad}_{U(h)}\circ\phi \circ h^{-1}g\, d\mu(h) = \int_{\mathcal{G}} \mathrm{Ad}_{U(gh)}\circ\phi\circ (gh)^{-1}g\, d\mu(h) \\
	&= \int_{\mathcal{G}} \mathrm{Ad}_{U(g)}\circ \mathrm{Ad}_{U(h)}\circ\phi\circ h^{-1}g^{-1}g\, d\mu(h) = \mathrm{Ad}_{U(g)} \circ \proj{U}(\phi), \nonumber
\end{align}
since $\mu(\mathcal{G})=1$ so $\proj{U}(\phi)$ is $U$-covariant and $\im{\proj{U}} = \mathcal{C}_{U}$; also, by similar arguments we have
\begin{align}
	\proj{U}^{2}(\phi) &= \int_{\mathcal{G}} \int_{\mathcal{G}} \mathrm{Ad}_{U(gh)}\circ \phi \circ (gh)^{-1} \, d\mu(h) d\mu(g) \\
	&= \int_{\mathcal{G}} d\mu(g) \int_{\mathcal{G}} \mathrm{Ad}_{U(gg^{-1}h)}\circ \phi \circ (gg^{-1}h)^{-1} \, d\mu(h) \nonumber \\
	&= \int_{\mathcal{G}} d\mu(g) \int_{\mathcal{G}} \mathrm{Ad}_{U(h)} \circ \phi \circ h^{-1} d\mu(h) = \proj{U}(\phi), \nonumber
\end{align}
so it is a projection with range $\mathcal{C}_U$. Let $\varphi$ be decomposable and $U$-covariant, so that $\varphi = \phi_1 + \phi_2$ with $\phi_1$ being CP and $\phi_2$ coCP. Then,
\begin{equation}
	\varphi = \proj{U}(\varphi) = \proj{U}(\phi_1) + \proj{U}(\phi_2).
\end{equation}
However, by virtue of Theorem \ref{prop:ConesInvarianceP} (see Appendix \ref{app:SideTheorems}) we have $\proj{U}(\phi_1) \in \cp{\mathscr{A}}{H}$ and $\proj{U}(\phi_1) \in \cocp{\mathscr{A}}{H}$ so $\varphi$ indeed admits the claimed decomposition into a sum of $U$-covariant maps.
\end{proof}

It follows immediately that it suffices to find appropriate characterization of $U$-covariant coCP maps in order to determine all $U$-covariant decomposable maps. For a given unitary representation $U : \mathscr{A}\to B(H)$ we define its \emph{conjugate representation} $\overline{U}$ by setting
\begin{equation}
	\overline{U}(g) = \overline{U(g)},
\end{equation}
also unitary, with conjugation $A\mapsto \overline{A}$ in $B(H)$ given by $\overline{A} = (A^\hadj)^\transpose = J_H A J_H$ as in Section \ref{sec:Transposition}. A direct observation about covariance of coCP maps can be easily checked:

\begin{theorem}\label{thm:UcovUbarCov}
The following statements hold:
\begin{enumerate}
	\item A map $\phi$ is $U$-covariant if and only if $\transpose\circ\phi$ is $\overline{U}$-covariant;
	\item Conversely, $\phi$ is $\overline{U}$-covariant if and only if $\transpose\circ\phi$ is $U$-covariant;
	\item A map $\phi$ is coCP and $U$-covariant if and only if there exists a $\overline{U}$-covariant CP map $\psi$ such that $\phi = \transpose\circ\psi$.
\end{enumerate}
\end{theorem}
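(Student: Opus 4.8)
The plan is to prove all three parts by direct computation from the definitions, using the properties of $\transpose$ collected in Lemma~\ref{lemma:BarTransProperties} together with the fact (already noted after the definition of covariance) that $\mathrm{Ad}$ interacts well with the transposition. The key algebraic identity I would establish first is the following compatibility: for any $A, B \in B(H)$ one has $\transpose(\mathrm{Ad}_A(B)) = \mathrm{Ad}_{\overline{A}}(\transpose(B))$. Indeed, $\transpose(ABA^\hadj) = (ABA^\hadj)^\transpose = (A^\hadj)^\transpose B^\transpose A^\transpose$ by part (2) of Lemma~\ref{lemma:BarTransProperties}, and since $(A^\transpose)^\hadj = (A^\hadj)^\transpose = \overline{A}$ again by part (1), this equals $\overline{A}\, B^\transpose\, \overline{A}^{\,\hadj} = \mathrm{Ad}_{\overline{A}}(B^\transpose)$. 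This single identity does most of the work.

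For part (1), suppose $\phi$ is $U$-covariant, so $\phi \circ g = \mathrm{Ad}_{U(g)} \circ \phi$ for all $g \in \mathcal{G}$. Composing on the left with $\transpose$ and applying the compatibility identity gives $\transpose \circ \phi \circ g = \transpose \circ \mathrm{Ad}_{U(g)} \circ \phi = \mathrm{Ad}_{\overline{U(g)}} \circ \transpose \circ \phi = \mathrm{Ad}_{\overline{U}(g)} \circ (\transpose \circ \phi)$, which is exactly $\overline{U}$-covariance of $\transpose \circ \phi$. The converse direction is identical in structure: since $\transpose$ is an involution by part (3) of Lemma~\ref{lemma:BarTransProperties} ($(A^\transpose)^\transpose = A$), applying $\transpose$ again to $\overline{U}$-covariance of $\transpose\circ\phi$ recovers $U$-covariance of $\phi$; alternatively one invokes that $\overline{\overline{U}} = U$ (part (3) again) and runs the forward argument starting from $\transpose \circ \phi$. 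Part (2) is then immediate by substituting $U \mapsto \overline{U}$ in part (1) and using $\overline{\overline{U}} = U$, noting that $\overline{U}$ is a genuine unitary representation because conjugation is a multiplicative $\hadj$-preserving map (parts (1), (2), (4) of the lemma).

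For part (3), I would argue as follows. If $\phi$ is coCP and $U$-covariant, set $\psi := \transpose \circ \phi$. By the very definition of coCP (Section~\ref{sec:DecMaps}), $\psi$ is CP; by part (1) just proved, $\psi$ is $\overline{U}$-covariant; and since $\transpose$ is involutive, $\transpose \circ \psi = \transpose \circ \transpose \circ \phi = \phi$, giving the claimed factorization. Conversely, suppose $\phi = \transpose \circ \psi$ with $\psi$ CP and $\overline{U}$-covariant. Then $\transpose \circ \phi = \transpose \circ \transpose \circ \psi = \psi$ is CP, so $\phi$ is coCP by definition; and by part (2), $\overline{U}$-covariance of $\psi$ implies $U$-covariance of $\transpose \circ \psi = \phi$. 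Hence $\phi$ is coCP and $U$-covariant, completing the equivalence.

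There is no serious obstacle here; the only point requiring a little care is bookkeeping with which conjugation/transposition appears on which side, and verifying that $\overline{U}$ is indeed a unitary representation so that "$\overline{U}$-covariant" is a meaningful notion — both handled by Lemma~\ref{lemma:BarTransProperties}. The statement is essentially a formal consequence of the involutivity of $\transpose$ and the intertwining identity $\transpose \circ \mathrm{Ad}_A = \mathrm{Ad}_{\overline{A}} \circ \transpose$; the whole proof is a few lines once that identity is recorded.
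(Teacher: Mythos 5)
Your proof is correct, and since the paper states this theorem as something that "can be easily checked" without supplying a proof, your argument is precisely the intended direct verification: the intertwining identity $\transpose\circ\mathrm{Ad}_A=\mathrm{Ad}_{\overline{A}}\circ\transpose$ together with involutivity of $\transpose$ does all the work. No gaps.
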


As the above theorem suggests, it is enough to find a characterization of $\overline{U}$-covariant CP maps. The following result, formulated in terms of a dilation theorem, allows to achieve this: 

\begin{theorem}\label{thm:MainResult1}
Let $\mathscr{A}$ be a unital C*-algebra and $H$ a Hilbert space. A linear map $\phi\in\cp{\mathscr{A}}{H}$ is $\overline{U}$-covariant if and only if there exists a Hilbert space $K$, a *-homomorphism $\pi : \mathscr{A}\to B(K)$ and a bounded linear operator $V : H\to K$ such that, for all $a\in\mathscr{A}$, $g \in \mathcal{G}$,
\begin{equation}
	\phi (a) = V^\hadj \pi (a) V
\end{equation}
and
\begin{equation}\label{eq:MainResult1eq1}
	(\pi \circ g)(a) = W(g) \pi (a)W(g)^{-1}
\end{equation}
for a representation $W : \mathcal{G}\to B(K)$ satisfying
\begin{equation}\label{eq:MainResult1eq2}
	V \overline{U}(g) = W(g)V.
\end{equation}
\end{theorem}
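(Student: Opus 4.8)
The plan is to build the dilation by the standard Stinespring GNS construction and then exhibit the intertwining unitary representation $W$ directly from the $\overline{U}$-covariance of $\phi$, using uniqueness of the minimal Stinespring dilation to transfer the group action. First I would prove the easy ``if'' direction: given $\phi(a) = V^\hadj \pi(a) V$ with \eqref{eq:MainResult1eq1} and \eqref{eq:MainResult1eq2}, one computes for $g \in \mathcal{G}$, $a \in \mathscr{A}$,
\begin{equation}
\phi(g(a)) = V^\hadj \pi(g(a)) V = V^\hadj W(g)\pi(a) W(g)^{-1} V = \overline{U}(g) V^\hadj \pi(a) V \overline{U}(g)^{-1} = \overline{U}(g)\phi(a)\overline{U}(g)^{-1},
\end{equation}
where the third equality uses \eqref{eq:MainResult1eq2} in the form $V^\hadj W(g) = \overline{U}(g)V^\hadj$ (the adjoint of \eqref{eq:MainResult1eq2}, noting $\overline{U}(g)$ is unitary so $\overline{U}(g)^\hadj = \overline{U}(g)^{-1}$), hence $\phi$ is $\overline{U}$-covariant.

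For the ``only if'' direction I would take the minimal Stinespring dilation $(K,\pi,V)$ of the CP map $\phi$: let $K$ be the Hilbert space completion of $\mathscr{A}\otimes H$ modulo the null space of the sesquilinear form $\iprod{a\otimes x}{b\otimes y} = \iprod{x}{\phi(a^\hadj b)y}$, with $\pi(a)$ acting on the left factor by multiplication and $Vx = [\,1_{\mathscr{A}}\otimes x\,]$, so that $\phi(a) = V^\hadj \pi(a) V$ and $\pi(\mathscr{A})VH$ is dense in $K$. The key step is to define, for each $g \in \mathcal{G}$, an operator $W(g)$ on $K$ by
\begin{equation}
W(g)\,[\,a\otimes x\,] = [\,g(a)\otimes \overline{U}(g)x\,],
\end{equation}
and check it is well-defined and unitary: this follows because $\iprod{g(a)\otimes \overline{U}(g)x}{g(b)\otimes \overline{U}(g)y} = \iprod{\overline{U}(g)x}{\phi(g(a^\hadj b))\overline{U}(g)y} = \iprod{\overline{U}(g)x}{\overline{U}(g)\phi(a^\hadj b)\overline{U}(g)^{-1}\overline{U}(g)y} = \iprod{x}{\phi(a^\hadj b)y}$, using $\overline{U}(g)$ unitary and $\phi$ $\overline{U}$-covariant; surjectivity comes from applying the same formula for $g^{-1}$. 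One then verifies $g \mapsto W(g)$ is a representation (it is a composition of the automorphism $g$ and the representation $\overline{U}$, both respecting the form), that $W(g)\pi(a)W(g)^{-1} = \pi(g(a))$ directly from the definition of $\pi$ and $W$, and that $W(g)Vx = W(g)[\,1_{\mathscr{A}}\otimes x\,] = [\,1_{\mathscr{A}}\otimes \overline{U}(g)x\,] = V\overline{U}(g)x$, which is exactly \eqref{eq:MainResult1eq2}.

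I expect the main obstacle to be the verification that $W$ is genuinely a \emph{representation} with the required properties rather than merely a family of unitaries — in particular one should confirm that the $g$-action and $\overline{U}$ are compatible in the sense needed for $W(g)W(h) = W(gh)$ on the dense subspace $\mathscr{A}\otimes H$, which amounts to $g(h(a)) = (gh)(a)$ and the homomorphism property of $\overline{U}$, and then extend by density and boundedness to all of $K$. A secondary subtlety, if one wants $W$ to be strongly continuous (which the statement phrases simply as ``a representation $W : \mathcal{G}\to B(K)$''), is continuity: this follows from strong continuity of $\overline{U}$ together with norm-continuity of $g \mapsto g(a)$ for fixed $a$ and an $\varepsilon/3$ argument on the dense set $\pi(\mathscr{A})VH$, using that the $W(g)$ are uniformly bounded (indeed isometric). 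No appeal to compactness of $\mathcal{G}$ or amenability is needed here — that was only used in Theorem~\ref{thm:MainResultDec}; the present dilation argument works for any topological group with a continuous action and representation.
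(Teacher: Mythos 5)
Your proposal is correct and follows essentially the same route as the paper: the Stinespring/GNS construction on $\mathscr{A}\odot H$ with $W(g)$ defined on elementary tensors by $a\otimes x\mapsto g(a)\otimes\overline{U}(g)x$, well-definedness coming from the $\overline{U}$-covariance of $\phi$ via the sesquilinear form. The only (harmless) divergences are that you verify $V\overline{U}(g)=W(g)V$ directly by evaluating $W(g)$ on $[\,1_{\mathscr{A}}\otimes x\,]$, whereas the paper reaches it through the vectors $\pi(a)V\overline{U}(g)h$, and that you additionally spell out the easy ``if'' direction and the representation/continuity checks for $W$, which the paper leaves implicit.
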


\begin{proof}
The theorem is precisely the Scutaru's result \cite[Theorem 1]{Scutaru1979}, however formulated for a representation $\overline{U}$ instead of $U$. For Reader's convenience we show the construction explicitly.

Let $\mathscr{A}\odot H$ be the algebraic tensor product. For any $\xi, \eta\in\mathscr{A}\odot H$ in form $\xi = \sum_{i=1}^{n} a_i \otimes w_i$, $\eta = \sum_{i=1}^{n} b_i \otimes v_i$ where $a_i, b_i \in \mathscr{A}$, $w_i, v_i \in H$, we define a positive semidefinite sesquilinear form $\iprod{\cdot}{\cdot}_\odot$ by setting
\begin{equation}\label{eq:FormAlgTensProd}
	\iprod{\xi}{\eta}_\odot = \sum_{i,j=1}^{n} \iprod{w_i}{\phi(a_{i}^{\hadj}b_j)v_j}
\end{equation}
where $\iprod{\cdot}{\cdot}$ is an inner product in $H$. Since this form is degenerate in general, it induces a \emph{seminorm} $\xi\mapsto\sqrt{\iprod{\xi}{\xi}_\odot}$ on $\mathscr{A}\odot H$. Therefore, the quotient space $(\mathscr{A}\odot H) / \mathcal{N}$, where $\mathcal{N} = \{\xi  : \iprod{\xi}{\xi}_\odot = 0\}$, is a pre-Hilbert space endowed with the inner product $\iprod{[\xi]}{[\eta]} = \iprod{\xi}{\eta}_\odot$; here $[\xi] = \xi + \mathcal{N}$ denotes the equivalence class of $\xi$ in the quotient. Then one can define $K$ as $K = \overline{(\mathscr{A}\odot H)/\mathcal{N}}^{\|\cdot\|}$, the Hilbert space completion of the quotient with respect to the norm induced by this inner product. Operator $V : H\to K$ is then
\begin{equation}
	Vx = [1_\mathscr{A} \otimes x], \quad x\in H
\end{equation}
where $1_\mathscr{A}$ is a unit in $\mathscr{A}$, while $\pi : \mathscr{A}\to B(H)$ given by
\begin{equation}
	\pi(a)[\xi] = \left[\sum_{i=1}^{n} a a_i \otimes w_i \right], \quad a\in\mathscr{A}
\end{equation}
serves as a desired *-homomorphism. Construction of $K$, $V$ and $\pi$ is then standard as in the proof of the Stinespring's theorem \cite{Stinespring_1955}. Next we define a representation $W_0 : \mathcal{G}\to B(\mathscr{A}\odot H)$ as well as its lifting $W : \mathcal{G}\to B(K)$ as
\begin{equation}\label{eq:WgDef}
	W_0 (g)\xi = \sum_{i=1}^{n} g(a_i ) \otimes \overline{U}(g) w_i , \quad W(g)[\xi] = [W_0 (g)\xi].
\end{equation}
With direct computation one confirms that
\begin{equation}
	\iprod{W_0 (g)\xi}{W_0 (g)\eta}_\odot = \sum_{i,j=1}^{n}\iprod{\overline{U}(g)w_i}{\phi(g(a_{i}^{\hadj}b_j))\overline{U}(g)v_j}_\odot = \iprod{\xi}{\eta}_\odot
\end{equation}
which comes from unitarity of $\overline{U}$, group homomorphism properties of $g$ and $\overline{U}$-covariance of $\phi$. In result, subspace $\mathcal{N}$ is $W_0 (g)$-invariant, $W_0 (g)(\mathcal{N}) = \mathcal{N}$. This allows to check
\begin{equation}
	W(g)\pi(a) W(g^{-1})[\xi] = \left[\sum_{i=1}^{n} g(a) a_i \otimes w_i\right] = (\pi\circ g)(a)[\xi]
\end{equation}
for all $g\in\mathcal{G}$, $\xi \in \mathscr{A}\odot H$. Since this equality carries over to the completion, we obtain 
\begin{equation}
	(\pi\circ g)(a) = W(g)\pi(a)W(g)^{-1}, \quad g\in\mathcal{G}, a\in\mathscr{A}
\end{equation}
i.e.~equation \eqref{eq:MainResult1eq1}. Executing similar calculation for $\pi(a)V \overline{U}(g) h$, $h\in H$, leads to
\begin{equation}\label{eq:MainResult1eq3}
	\pi(a)V \overline{U}(g) h = [a \otimes \overline{U}(g) h].
\end{equation}
Notice that ``covariance'' property \eqref{eq:MainResult1eq1} holds for \emph{any} element $g\in\mathcal{G}$, so in particular, also for $g^{-1}$; this implies
\begin{equation}
	(\pi \circ g^{-1})(a) = W(g^{-1})\pi(a) W(g),
\end{equation}
\begin{equation}
	\pi (g^{-1}(a))Vh = W(g^{-1})\pi(a) W(g)Vh,
\end{equation}
giving
\begin{align}
	\pi(a)W(g)Vh &= W(g) \pi (g^{-1}(a)) Vh = W(g)[g^{-1}(a)\otimes h]\\
	&= \left[g(g^{-1}(a))\otimes \overline{U}(g)h\right] = [a \otimes \overline{U}(g)h].\nonumber
\end{align}
This however is the same as \eqref{eq:MainResult1eq3}, i.e.~we obtain
\begin{equation}
	\pi(a) V\overline{U}(g)h = \pi(a)W(g)Vh,
\end{equation}
satisfied for all $g\in\mathcal{G}$, $h\in H$ so $V\overline{U}(g) = W(g)V$ and \eqref{eq:MainResult1eq2} is also proved.
\end{proof}

\begin{theorem}
\label{thm:coCPUcov}
A linear map $\phi \in \cocp{\mathscr{A}}{H}$ is $U$-covariant if and only if there exists a Hilbert space $K$, a *-antihomomorphism $\pi_- : \mathscr{A}\to B(K)$ and a bounded linear operator $V : H\to K$ such that, for all $a\in\mathscr{A}$, $g \in \mathcal{G}$,
\begin{equation}
	\phi (a) = V^\hadj \pi_- (a) V
\end{equation}
and $(\pi_-\circ g)(a) = \tilde{U}(g) \pi_- (a) \tilde{U}(g)^{-1}$ for a representation $\tilde{U} : \mathcal{G}\to B(K)$ satisfying $VU(g) = \tilde{U}(g)V$.
\end{theorem}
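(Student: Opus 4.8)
The plan is to obtain this from Theorems \ref{thm:UcovUbarCov} and \ref{thm:MainResult1} by ``transposing'' the Stinespring data of an associated completely positive map. For the nontrivial (``only if'') implication, suppose $\phi\in\cocp{\mathscr{A}}{H}$ is $U$-covariant. By Theorem \ref{thm:UcovUbarCov}(3), $\phi = \transpose\circ\psi$ for some CP, $\overline U$-covariant map $\psi$ (equivalently $\psi = \transpose\circ\phi$, as $\transpose$ is an involution). Applying Theorem \ref{thm:MainResult1} to $\psi$ then produces a Hilbert space $K$, a *-homomorphism $\pi:\mathscr{A}\to B(K)$, a bounded operator $V:H\to K$, and a unitary representation $W:\mathcal{G}\to B(K)$ with $\psi(a)=V^\hadj\pi(a)V$, $(\pi\circ g)(a)=W(g)\pi(a)W(g)^{-1}$, and $V\,\overline U(g)=W(g)V$.

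The next step is to conjugate this data. I would fix an orthonormal basis of $K$ and let $J_K$ be the associated complex conjugation, so that the transposition $\transpose_K$ and the conjugation $\overline{\,\cdot\,}$ on $B(K)$ are defined as in Section \ref{sec:Transposition} (here $\transpose$ stays reserved for the transposition on $B(H)$ from the definition of coCP), and then set $\pi_-:=\transpose_K\circ\pi$, $V_-:=\overline V = J_K V J_H$, and $\tilde U(g):=\overline{W(g)}=J_K W(g)J_K$. Lemma \ref{lemma:BarTransProperties} should give immediately that $\pi_-$ is a unital *-antihomomorphism (since $\transpose_K$ reverses products and commutes with $\hadj$), that $\tilde U$ is a strongly continuous unitary representation of $\mathcal{G}$ on $K$ (since $\overline{\,\cdot\,}$ is multiplicative, $\hadj$-preserving, and sends unitaries to unitaries), and that $V_-^\hadj = J_H V^\hadj J_K$; then, inserting $J_K^{2}=\mathrm{id}_K$, the product $V_-^\hadj\pi_-(a)V_-$ collapses to $J_H V^\hadj\pi(a)^\hadj V J_H = J_H\psi(a)^\hadj J_H = \transpose(\psi(a)) = \phi(a)$, which is the required factorization.

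For the two covariance identities I would again just push the conjugation through the relations coming from Theorem \ref{thm:MainResult1}. Applying $\transpose_K$ to $(\pi\circ g)(a)=W(g)\pi(a)W(g)^{-1}$ and using that $\transpose_K$ reverses a triple product gives $\pi_-(g(a)) = (W(g)^{\transpose_K})^{-1}\pi_-(a)\,W(g)^{\transpose_K}$; unitarity of $W(g)$ identifies $W(g)^{\transpose_K} = (\overline{W(g)})^{\hadj} = \tilde U(g)^{-1}$, so $(\pi_-\circ g)(a)=\tilde U(g)\pi_-(a)\tilde U(g)^{-1}$. Applying $\overline{\,\cdot\,}$ to $V\,\overline U(g)=W(g)V$ and using $\overline{\overline{U(g)}}=U(g)$ yields $\overline V\, U(g)=\overline{W(g)}\,\overline V$, i.e.~$V_- U(g)=\tilde U(g)V_-$. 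This completes the ``only if'' direction.

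For the converse, given $\phi(a)=V^\hadj\pi_-(a)V$ with $\pi_-$ a *-antihomomorphism, $(\pi_-\circ g)(a)=\tilde U(g)\pi_-(a)\tilde U(g)^{-1}$, and $VU(g)=\tilde U(g)V$: choosing any conjugation $J_K$ on $K$ and $W':=J_K V J_H$, one gets $\transpose\circ\phi = W'^{\hadj}(\transpose_K\circ\pi_-)(\cdot)\,W'$ with $\transpose_K\circ\pi_-$ a *-homomorphism, hence $\transpose\circ\phi$ is CP by Stinespring, i.e.~$\phi$ is coCP (this is the coCP half of St{\o}rmer's characterization recalled in Section \ref{sec:DecMaps}); and $U$-covariance is immediate, $\phi(g(a)) = V^\hadj\tilde U(g)\pi_-(a)\tilde U(g)^{-1}V = U(g)V^\hadj\pi_-(a)V U(g)^{-1} = U(g)\phi(a)U(g)^{-1}$, after using $VU(g)=\tilde U(g)V$ and unitarity to slide $U(g)^{\pm1}$ past $V$ and $V^\hadj$. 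I do not expect a genuine obstacle: once Theorems \ref{thm:UcovUbarCov} and \ref{thm:MainResult1} are available the argument is bookkeeping with the conjugation/transposition identities of Lemma \ref{lemma:BarTransProperties}. The only delicate points are notational --- keeping the auxiliary transposition $\transpose_K$ on $B(K)$ (relative to the arbitrarily chosen $J_K$) distinct from the transposition $\transpose$ on $B(H)$ defining coCP, and noting that a different choice of $J_K$ merely replaces the dilation data by unitarily equivalent data.
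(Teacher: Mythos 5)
Your proposal is correct and follows essentially the same route as the paper: decompose $\phi=\transpose\circ\psi$ via Theorem \ref{thm:UcovUbarCov}, dilate $\psi$ via Theorem \ref{thm:MainResult1}, and conjugate the data $(\pi,V,W)$ by $J_K$, $J_H$ to obtain $(\pi_-,\overline{V},\overline{W})$, using Lemma \ref{lemma:BarTransProperties} for the bookkeeping. The only step you take for granted is the unitarity of $W(g)$ (needed to identify $W(g)^{\transpose_K}=\tilde U(g)^{-1}$), which the paper verifies explicitly from the Stinespring construction but which indeed holds.
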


\begin{proof}
From Theorem \ref{thm:UcovUbarCov} it is clear that it suffices to combine Theorem \ref{thm:MainResult1} with appropriate transposition map. Let $\psi : \mathscr{A}\to B(H)$ be CP and $\overline{U}$-covariant, so $\psi(a) = X^\hadj \pi_+ (a) X$ where $X : H\to K$ is linear and bounded, and $\pi_+ : \mathscr{A}\to B(K)$ is a *-homomorphism for Hilbert space $K$ properly constructed, as Theorem \ref{thm:MainResult1} dictates. $K$ comes with its own antilinear isometric isomorphism $J_K : K \to K$ of complex conjugation of vectors, $J_K x = \sum_{n\in\mathcal{I}} \overline{\iprod{\kappa_n}{x}}\kappa_n$ for $x\in K$ and $\{\kappa_n\}_{n\in\mathcal{I}}$ an orthonormal basis spanning $K$; then, $A \mapsto \tau(A) = J_K A^\hadj J_K$ is a transposition on $B(K)$ (we use the same symbol $\transpose$ regardless of the underlying space). Lemma \ref{lemma:BarTransProperties} and idempotence of $J_K$ yield
\begin{align}
	(\tau\circ\psi)(a) &= J_H (X^\hadj \pi_+(a) X)^\hadj J_H = J_H X^\hadj J_K \, J_K \pi_+(a)^\hadj J_K \, J_K X J_H \\
	&= V^\hadj (\tau\circ\pi_+)(a) V = \phi(a) \nonumber
\end{align}
for $V = J_K X J_H: H\to K$, and $\tau\circ\pi_+ = \pi_- : \mathscr{A}\to B(K)$ a *-antihomomorphism, as claimed. Employing equations \eqref{eq:FormAlgTensProd} and \eqref{eq:WgDef}, homomorphism properties of $g$ and $\overline{U}$-covariance of $\psi$, we obtain
\begin{align}
	\iprod{\xi}{W_0 (g)\eta}_\odot &= \sum_{i,j=1}^{n} \iprod{w_i}{\psi(g(g^{-1}(a_i)^\hadj)b_j) \overline{U}(g)v_j} \\
	&= \sum_{i,j=1}^{n} \iprod{g^{-1}(a_i)\otimes \overline{U}(g)^\hadj w_i}{b_j \otimes v_j},\nonumber
\end{align}
after some algebra, which yields the Hermitian conjugation of $W(g)$ to be
\begin{equation}
	W(g)^\hadj [\xi] = W(g)^{-1}[\xi] = \left[\sum_{i=1}^{n} g^{-1}(a_i) \otimes \overline{U}(g)^\hadj w_i \right],
\end{equation}
i.e.~$W(g)$ is clearly unitary. Then, transposing both sides of equality \eqref{eq:MainResult1eq1} gives
\begin{align}
	(\pi_- \circ g)(a) &= J_K (W(g)^{-1})^\hadj J_K \, J_K \pi(a)^\hadj J_K \, J_K W(g)^\hadj J_K \\
	&= \overline{W}(g) \pi_- (a) \overline{W}(g)^{-1}, \nonumber
\end{align}
from which we decipher $\tilde{U} = \overline{W}$. Finally, by transposing left hand side of \eqref{eq:MainResult1eq2} with $V$ replaced by $X$ we have
\begin{align}\label{eq:trXU}
	\transpose(X\overline{U}(g)) &= J_H\overline{U}(g)^\hadj J_H \, J_H X^\hadj J_K = \left((J_K X J_H)\overline{\overline{U}(g)}\right)^\hadj \\
	&= (V U(g))^\hadj , \nonumber
\end{align}
while transpose of the right hand side is
\begin{equation}\label{eq:trWX}
	\transpose(W(g)X) = J_H X^\hadj J_K \, J_K W(g)^\hadj J_K = V^\hadj \overline{W}(g)^\hadj.
\end{equation}
Equating \eqref{eq:trXU} with \eqref{eq:trWX} yields $VU(g) = \tilde{U}(g)V$, i.e.~$\tilde{U} = \overline{W}$ satisfies the required equation and the proof is finished.
\end{proof}

As a kind of a corollary, we present the following dilation theorem characterizing $U$-covariant decomposable maps:

\begin{theorem}\label{thm:MainResult2}
Let $\mathscr{A}$ be a unital C*-algebra and $H$ a Hilbert space. A linear map $\varphi \in \dec{\mathscr{A}}{H}$ is $U$-covariant if and only if there exists a Hilbert space $K$, a Jordan morphism $\pi : \mathscr{A}\to B(K)$ and a bounded linear operator $V : H\to K$ such that, for all $a\in\mathscr{A}$ and $g \in \mathcal{G}$,
\begin{equation}\label{eq:MainResult2Eq1}
	\varphi (a) = V^\hadj \pi (a) V
\end{equation}
and
\begin{equation}\label{eq:MainResult2Eq2}
	(\pi \circ g)(a) = Z(g) \pi (a)Z(g)^{-1}
\end{equation}
for a representation $Z : \mathcal{G}\to B(K)$ satisfying
\begin{equation}\label{eq:MainResult2Eq3}
	V U(g) = Z(g)V.
\end{equation}
\end{theorem}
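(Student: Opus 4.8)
The plan is to read Theorem~\ref{thm:MainResult2} as the ``decomposable'' counterpart of the covariant Stinespring dilations obtained above, and to prove it by feeding the $U$-covariant splitting of Theorem~\ref{thm:MainResultDec} into Theorems~\ref{thm:MainResult1} and~\ref{thm:coCPUcov}. For the ``only if'' direction, that is, from decomposability and $U$-covariance to the dilation: first I would use Theorem~\ref{thm:MainResultDec} to write $\varphi = \phi_1 + \phi_2$ with $\phi_1 \in \cp{\mathscr{A}}{H}$ and $\phi_2 \in \cocp{\mathscr{A}}{H}$ both $U$-covariant. Next I would dilate $\phi_1$ by applying Theorem~\ref{thm:MainResult1} to the \emph{conjugate} representation $\overline{U}$ in place of the representation called ``$U$'' there; since $\overline{\overline{U}} = U$ by Lemma~\ref{lemma:BarTransProperties}(3), this produces a Hilbert space $K_1$, a $*$-homomorphism $\pi_1 : \mathscr{A}\to B(K_1)$, a bounded operator $V_1 : H\to K_1$ and a unitary representation $W_1 : \mathcal{G}\to B(K_1)$ with $\phi_1(a) = V_1^{\hadj}\pi_1(a)V_1$, $(\pi_1\circ g)(a) = W_1(g)\pi_1(a)W_1(g)^{-1}$, and $V_1 U(g) = W_1(g)V_1$. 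Applying Theorem~\ref{thm:coCPUcov} to $\phi_2$ gives, in the same fashion, a Hilbert space $K_2$, a $*$-antihomomorphism $\pi_2 : \mathscr{A}\to B(K_2)$, a bounded $V_2 : H\to K_2$ and a unitary representation $W_2 : \mathcal{G}\to B(K_2)$ satisfying the three analogous relations.

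Then I would amalgamate: put $K = K_1\oplus K_2$, $\pi = \pi_1\oplus\pi_2$, $Vx = (V_1 x, V_2 x)$ for $x\in H$, and $Z = W_1\oplus W_2$. Here $\pi$ is a Jordan morphism, being a direct sum of a $*$-homomorphism and a $*$-antihomomorphism (each of which is Jordan, and a direct sum of Jordan morphisms is again Jordan), $Z$ is a unitary representation of $\mathcal{G}$ on $K$, and $V$ is bounded with $V^{\hadj}(y_1,y_2) = V_1^{\hadj}y_1 + V_2^{\hadj}y_2$. A one-line computation gives $V^{\hadj}\pi(a)V = V_1^{\hadj}\pi_1(a)V_1 + V_2^{\hadj}\pi_2(a)V_2 = \phi_1(a) + \phi_2(a) = \varphi(a)$, which is~\eqref{eq:MainResult2Eq1}; the block-diagonal structure of $\pi$ and $Z$ yields $Z(g)\pi(a)Z(g)^{-1} = \pi_1(g(a))\oplus\pi_2(g(a)) = \pi(g(a))$, which is~\eqref{eq:MainResult2Eq2}; and $Z(g)Vx = (W_1(g)V_1 x, W_2(g)V_2 x) = (V_1 U(g)x, V_2 U(g)x) = V U(g)x$, which is~\eqref{eq:MainResult2Eq3}.

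For the ``if'' direction, suppose $\varphi$ admits a factorization~\eqref{eq:MainResult2Eq1} with $\pi$ a Jordan morphism, together with~\eqref{eq:MainResult2Eq2} and~\eqref{eq:MainResult2Eq3} for a unitary representation $Z$. Decomposability of $\varphi$ is St\o rmer's theorem: a Jordan morphism into $B(K)$ splits, after a suitable orthogonal decomposition of $K$, as a direct sum of a $*$-homomorphism and a $*$-antihomomorphism, so $V^{\hadj}\pi(\cdot)V$ becomes a sum of a CP and a coCP map. For $U$-covariance, I would take adjoints in~\eqref{eq:MainResult2Eq3} and use unitarity of the representations $U$, $Z$ to obtain $U(g)V^{\hadj} = V^{\hadj}Z(g)$ for all $g$; combined with $VU(g)^{-1} = Z(g)^{-1}V$ and~\eqref{eq:MainResult2Eq2}, this gives $U(g)\varphi(a)U(g)^{-1} = V^{\hadj}Z(g)\pi(a)Z(g)^{-1}V = V^{\hadj}\pi(g(a))V = \varphi(g(a))$, so $\varphi$ is $U$-covariant.

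Almost everything here is bookkeeping, given Theorems~\ref{thm:MainResultDec}, \ref{thm:MainResult1} and~\ref{thm:coCPUcov}; the one step that I expect to require genuine care is the choice of representation handed to Theorem~\ref{thm:MainResult1}. One must dilate the completely positive summand through the conjugate representation $\overline{U}$, so that, via $\overline{\overline{U}} = U$, the intertwining relation emerges as $V_1 U(g) = W_1(g)V_1$ rather than a conjugated version of it; and one must keep in mind that the amalgamated representation $Z = W_1\oplus W_2$ is unitary, which is precisely what legitimizes the converse covariance computation (which needs $U(g)V^{\hadj} = V^{\hadj}Z(g)$). Keeping the conjugations and adjoints mutually consistent throughout is the single place where a careless write-up could slip.
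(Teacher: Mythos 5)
Your proposal is correct and follows essentially the same route as the paper: split $\varphi$ into $U$-covariant CP and coCP summands via Theorem~\ref{thm:MainResultDec}, dilate each summand covariantly (the paper cites Scutaru's Theorem~9 for the CP part where you invoke Theorem~\ref{thm:MainResult1} with $U\mapsto\overline{U}$, which is the same content), and amalgamate on $K_1\oplus K_2$ in St\o rmer's fashion. Your explicit treatment of the converse direction, including the observation that unitarity of $Z$ is what makes the covariance computation go through, is a detail the paper leaves implicit but is consistent with its construction.
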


\begin{proof}
Let $\varphi : \mathscr{A}\to B(H)$ be decomposable and $U$-covariant. Theorem \ref{thm:MainResultDec} guarantees that there exist a CP map $\phi_1$ and a coCP map $\phi_2$, both $U$-covariant, such that $\varphi = \phi_1 + \phi_2$. Scutaru's result \cite[Theorem 9]{Scutaru1979} and Theorem \ref{thm:MainResult1} infer existence of Hilbert spaces $K_1$, $K_2$, bounded linear operators $V_1 : H \to K_1$, $V_2 : H \to K_2$, a *-homomorphism $\pi_1 : \mathscr{A}\to B(K_1)$ and a *-antihomomorphism $\pi_2 : \mathscr{A}\to B(K_2)$, such that 
\begin{subequations}
	\begin{equation}
		\phi_i (a) = V_{i}^{\hadj} \pi_i (a) V_i,
	\end{equation}
	\begin{equation}
		(\pi_i\circ g)(a) = \tilde{U}(g) \pi_i (a) \tilde{U}(g)^{-1},
	\end{equation}
	\begin{equation}
		V_i U(g) = \tilde{U}(g) V_i ,
	\end{equation}
\end{subequations}
for $i=1,2$ and for all $a\in\mathscr{A}$, $g\in\mathcal{G}$. Following St{\o}rmer \cite{Stoermer2013,Stormer_1982}, let us then take $K = K_1 \oplus K_2$ and define $V$ and $\pi$ by
\begin{subequations}
	\begin{equation}
		V: H \to K_1 \oplus K_2, \quad V = V_1 + V_2, 
	\end{equation}
	\begin{equation}
		\pi : \mathscr{A}\to B(K_1 \oplus K_2), \quad \pi(a) = \proj{1} \pi_1 (a) \proj{1} + \proj{2} \pi_2 (a) \proj{2},
	\end{equation}
\end{subequations}
where $\proj{1}$, $\proj{2}$ are orthogonal projections onto $K_1$, $K_2$.  By direct check, $a\mapsto\proj{1} \pi_1 (a) \proj{1}$ and $a\mapsto\proj{2} \pi_2 (a) \proj{2}$ are a *-homomorphism and *-antihomomorphism, respectively, and so $\pi$ is a Jordan morphism. Therefore, since $\im{\proj{i}} = \im{V_i} = K_i$, we can put $V_i = \proj{i}V_i$ and
\begin{align}
	\varphi(a) &= V^\hadj \pi(a) V \\
	&= (V_{1}^{\hadj}\proj{1} + V_{2}^{\hadj}\proj{2})(\proj{1}\pi_1 (a) \proj{1} + \proj{2}\pi_2 (a) \proj{2})(\proj{1}V_1 + \proj{2}V_2) \nonumber \\
	&= V_{1}^{\hadj} \pi_1 (a) V_1 + V_{2}^{\hadj}\pi_2 (a) V_2 \nonumber\\
	&= \phi_1 (a) + \phi_2 (a) , \nonumber
\end{align}
so $\varphi$ has the claimed form \eqref{eq:MainResult2Eq1}. Let $Z : \mathcal{G}\to B(K_1\oplus K_2)$ be given by
\begin{equation}
	Z(g) = \proj{1} \tilde{U}(g) \proj{1} + \proj{2} \tilde{U}(g) \proj{2}.
\end{equation}
One easily confirms $Z(g)^{-1} = \proj{1} \tilde{U}(g)^{-1} \proj{1} + \proj{2} \tilde{U}(g)^{-1} \proj{2}$. Then, a simple algebra involving orthogonality of projections $\proj{i}$ shows
\begin{equation}
	Z(g) \pi(a) Z(g)^{-1} = (\pi \circ g)(a),
\end{equation}
i.e.~\eqref{eq:MainResult2Eq2} is satisfied. Finally,
\begin{align}
	Z(g)V &= (\proj{1} \tilde{U}(g) \proj{1} + \proj{2} \tilde{U}(g) \proj{2})(\proj{1}V_1 + \proj{2}V_2) \\
	&= \proj{1}\tilde{U}(g)V_1 + \proj{2}\tilde{U}(g)V_2 \nonumber \\
	&= \proj{1}V_1 U(g) + \proj{2}V_2 U(g)  \nonumber \\
	&= VU(g),\nonumber
\end{align}
since $\tilde{U}(g)V_i = V_i U(g)$, so \eqref{eq:MainResult2Eq3} also holds.
\end{proof}

\subsection{Maps on \texorpdfstring{$B(H)$}{B(H)}}

As a particular example, consider a von Neumann algebra $\mathscr{A} = B(H)$. Let $\varphi : B(H)\to B(H)$ be decomposable and $U$-covariant; then, Theorems \ref{thm:MainResultDec} and \ref{thm:UcovUbarCov} yield existence of maps $\phi , \psi \in \cpe{B(H)}$ such that $\varphi = \phi + \transpose\circ\psi$ with $\phi$ being $U$-covariant and $\psi$ being $\overline{U}$-covariant. Employing Kraus lemma \cite{Kraus1971}, map $\varphi$ may be further written as
\begin{equation}
	\varphi(A) = \sum_n X_n A X_{n}^{\hadj} + \sum_{n} W_n A^\transpose W_{n}^{\hadj}, \quad A\in B(H),
\end{equation}
for some countable families of bounded operators $(X_n)$, $(W_n)$ on $H$ such that $\sum_n X_n X_{n}^{\hadj}$ and $\sum_n W_n W_{n}^{\hadj}$ converge in weak-* topology in $B(H)$. For $\mathcal{G}$ a compact Lie group, let $g$ be definad via
\begin{equation}
	g(A) = V(g)AV(g)^{-1}
\end{equation}
where $V(g)\in B(H)$ is unitary, so $g$ is also a unitary representation of $\mathcal{G}$. The covariance condition then reads
\begin{equation}
	\varphi\circ\mathrm{Ad}_{V(g)} =  \mathrm{Ad}_{U(g)}\circ \varphi , \quad g\in\mathcal{G}
\end{equation}
which translates to conditions for the CP and coCP parts
\begin{align}
	\phi(A) &= U(g) \phi \left(V(g)^{-1}AV(g)\right)U(g)^{-1}, \label{eq:BHcondCP}\\ 
	\psi(A) &= \overline{U}(g) \psi \left(V(g)^{-1}AV(g)\right)\overline{U}(g)^{-1} \label{eq:BHcondcoCP}
\end{align}
being satisfied for all $A\in B(H)$, $g\in\mathcal{G}$.

\begin{theorem}
	A decomposable map $\varphi$ on $B(H)$ is $U$-covariant if and only if there exist families $(X_n)$, $(W_n)$ in $B(H)$ such that $\sum_n X_n X_{n}^{\hadj}$, $\sum_n W_n W_{n}^{\hadj}$ are weakly-* convergent in $B(H)$ and
	\begin{equation}
		U(g) X_n V(g)^{-1} = \sum_m N_{nm} X_m, \quad \overline{U}(g) W_n V(g)^{-1} = \sum_m M_{nm} W_m ,
	\end{equation}
	for some unitary matrices $[N_{nm}]$, $[M_{nm}]$.
\end{theorem}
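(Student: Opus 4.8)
The plan is to reduce the statement to the standard freedom of operator-sum (Kraus) representations of completely positive maps. By Theorem~\ref{thm:MainResultDec} together with Theorem~\ref{thm:UcovUbarCov}, a decomposable $\varphi$ on $B(H)$ is $U$-covariant if and only if $\varphi = \phi + \transpose\circ\psi$ with $\phi\in\cpe{B(H)}$ being $U$-covariant and $\psi\in\cpe{B(H)}$ being $\overline{U}$-covariant, which is exactly what conditions \eqref{eq:BHcondCP}--\eqref{eq:BHcondcoCP} encode; and by the Kraus lemma \cite{Kraus1971} we may fix representations $\phi(A)=\sum_n X_nAX_n^\hadj$ and $\psi(A)=\sum_n W_nAW_n^\hadj$ with $\sum_n X_nX_n^\hadj$ and $\sum_n W_nW_n^\hadj$ weakly-$*$ convergent, so that $\transpose\circ\psi$ supplies the copositive summand $A\mapsto\sum_n W_nA^\transpose W_n^\hadj$ appearing in the asserted form of $\varphi$. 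Hence it suffices to show, separately for the two summands, that the covariance identities \eqref{eq:BHcondCP} and \eqref{eq:BHcondcoCP} are equivalent to the claimed intertwining relations.

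For the completely positive summand I would rewrite \eqref{eq:BHcondCP} as an equality of CP maps. Since $U(g)$ and $V(g)$ are unitary one has $\bigl(U(g)X_nV(g)^{-1}\bigr)^\hadj = V(g)X_n^\hadj U(g)^{-1}$, so \eqref{eq:BHcondCP} asserts precisely that, for each fixed $g\in\mathcal{G}$,
\begin{equation*}
  \sum_n X_n A X_n^\hadj \;=\; \sum_n \bigl(U(g)X_nV(g)^{-1}\bigr)\,A\,\bigl(U(g)X_nV(g)^{-1}\bigr)^\hadj, \qquad A\in B(H).
\end{equation*}
Both sides are operator-sum representations of one and the same CP map over the same index set, so the uniqueness of such representations up to a unitary reshuffle of the Kraus operators yields, for each $g$, a unitary matrix $[N_{nm}]$ (depending on $g$) with $U(g)X_nV(g)^{-1}=\sum_m N_{nm}X_m$. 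The reverse implication is a one-line computation: inserting this relation on the right-hand side above and using $\sum_n N_{nm}\overline{N_{nk}}=\delta_{mk}$ collapses the double sum back to $\sum_m X_mAX_m^\hadj$. Repeating the argument verbatim for \eqref{eq:BHcondcoCP}, with $\psi$ in place of $\phi$ and $\overline{U}(g)$ in place of $U(g)$, produces in the same way the unitary matrices $[M_{nm}]$ with $\overline{U}(g)W_nV(g)^{-1}=\sum_m M_{nm}W_m$, together with the converse.

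Assembling the two halves then completes the argument: conversely, given families $(X_n)$, $(W_n)$ with weakly-$*$ convergent $\sum_n X_nX_n^\hadj$, $\sum_n W_nW_n^\hadj$ and the two intertwining relations, the backward computations show that $\phi(A)=\sum_n X_nAX_n^\hadj$ satisfies \eqref{eq:BHcondCP} and $\psi(A)=\sum_n W_nAW_n^\hadj$ satisfies \eqref{eq:BHcondcoCP}, so $\varphi=\phi+\transpose\circ\psi$ is decomposable and $U$-covariant by Theorems~\ref{thm:MainResultDec} and \ref{thm:UcovUbarCov}. I expect the only genuinely delicate point to be the uniqueness of Kraus representations up to unitary equivalence in the full $B(H)$ setting --- countably many Kraus operators, weak-$*$ rather than norm convergence --- where one must be a little careful that the two operator-sum families, having the same index set, are related by an honest unitary on that index set; and the careful handling of transpositions and conjugations needed to keep the copositive summand literally in the form $\sum_n W_nA^\transpose W_n^\hadj$. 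Everything else is routine linear algebra.
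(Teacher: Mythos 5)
Your proposal is correct and follows essentially the same route as the paper: decompose $\varphi$ into a $U$-covariant CP part and a conjugate-covariant part via Theorems \ref{thm:MainResultDec} and \ref{thm:UcovUbarCov}, observe that the covariance conditions \eqref{eq:BHcondCP}--\eqref{eq:BHcondcoCP} produce a second operator-sum representation of the same CP map with Kraus operators $U(g)X_nV(g)^{-1}$ and $\overline{U}(g)W_nV(g)^{-1}$, and invoke uniqueness of such representations up to a unitary matrix. The ``delicate point'' you flag is resolved in the paper exactly as you suspect --- by assuming the Stinespring representations involved are minimal, so that the two Kraus families are related by an honest unitary.
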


\begin{proof}
	It is enough to notice that covariance conditions \eqref{eq:BHcondCP} and \eqref{eq:BHcondcoCP} yield, with assumption that Stinespring representations involved are minimal,
	\begin{align}
		\phi(A) &= \sum_n U(g) X_n V(g)^{-1} A V(g) X_{n}^{\hadj} U(g)^{-1}, \\
		\psi(A) &= \sum_n \overline{U}(g) W_n V(g)^{-1} A V(g) W_{n}^{\hadj} \overline{U}(g)^{-1}
	\end{align}
	where both series converge in weak-* topology in $B(H)$ for every $A$, i.e.~they represent CP maps via new Kraus operators $\tilde{X}_n = U(g) X_n V(g)^{-1}$, $\tilde{W}_n = \overline{U}(g) W_n V(g)^{-1}$. Since minimal Stinespring representations of the same map are unique up to a unitary transformation, the claim follows.
\end{proof}

\subsection{Maps covariant w.r.t.~the maximal commutative subgroup of \texorpdfstring{$\mathrm{U}(n)$}{U(n)}}
\label{sec:MapsOnMn}

In this section we focus our attention on maps on $\matr{n}$ subject to a very particular covariance condition. Namely, let $\mathscr{A}=\matr{n}$ and let $\mathcal{G} = \mathbb{S}^1 \times ... \times \mathbb{S}^1$ for $\mathbb{S}^1$ a circle group, so that any element $g \in \mathcal{G}$ is of a form $g = (g_j)_{j=1}^{n} = (e^{ix_1},...,e^{ix_n})$ for some real vector $(x_j)\in\reals^n$. Group structure of $\mathcal{G}$ is defined element-wise, i.e. for $g, h\in\mathcal{G}$ we have $gh = (g_j h_j)$, $g^{-1} = (g_{j}^{-1}) = (e^{-ix_j})$ and $(1, ... ,1)$ is a neutral element. We define $U : \mathcal{G}\to \mathrm{U}(n)$ as
\begin{equation}\label{eq:UmaxComm}
	U(g) = \sum_{j=1}^{n} e^{i x_j} E_{jj} , \quad g = (e^{ix_j}),
\end{equation}
i.e.~$U(\mathcal{G})$ is the \emph{maximal commutative subgroup} of all unitary diagonal matrices of size $n$. For the automorphism $g : \matr{n}\to\matr{n}$, we take it in a form $g(A) = U(g)AU(g)^{-1}$. In other words, we will be interested in all $U$-covariant maps $\varphi : \matr{n}\to\matr{n}$ subject to equality
\begin{equation}\label{eq:CovarianceCondition}
	\varphi \left(U(g) A U(g)^{-1}\right) = U(g) \varphi(A) U(g)^{-1}
\end{equation}
for all $g = (e^{ix_j})$, $(x_j)\in\reals^n$ and all $A\in\matr{n}$; we will simply refer to them as \emph{covariant} throughout this section. 

\subsubsection{Conditions for covariance}

One shows $\phi$ is covariant if and only if \cite[Proposition 28]{Chruscinski2022}
\begin{equation}\label{eq:CovariantChr}
	\phi(A) = \sum_{i,j=1}^{n} a_{ij}(1-\delta_{ij}) E_{ij} A E_{ji} + \sum_{i,j=1}^{n} b_{ij} E_{ii} A E_{jj}
\end{equation}
for some $a_{ij},b_{ij}\in\complexes$. Further, $\phi$ is \emph{Hermiticity preserving}, $\phi(A)^\hadj = \phi(A^\hadj)$, if and only if $a_{ij}\in\reals$ and $[b_{ij}]$ is Hermitian. Also, $\phi\in\cpe{\matr{n}}$ if and only if $a_{ij}\geqslant 0$ and $[b_{ij}]\in\matr{n}^+$. Equation \eqref{eq:CovariantChr} is a complete characterization of the so-called \emph{operator sum representation} of covariant maps, expressed in terms of a canonical basis in $\matr{n}$. In this section however we find such characterization also with respect to the \emph{Frobenius basis} $\{F_i\}$. That said, we focus on the operator sum representation
\begin{equation}\label{eq:phiA}
	\phi(A) = \sum_{i,j=1}^{n^2} c^{\phi}_{ij} F_i A F_{j}^{\hadj}
\end{equation}
and internal structure of the matrix $\hat{c}_\phi = [c^{\phi}_{ij}] \in \matr{n^2}$ which guarantees the covariance of $\phi$; note that $[c^{\phi}_{ij}]$ uniquely determines the map $\phi$, up to the choice of a basis in $\matr{n}$ \cite{Szczygielski2023}. For this we first formulate the following sufficient and necessary condition for covariance in a slightly more general manner than we need.

Let $U,V : \mathcal{G}\to\matr{n}$ be (any) two unitary representations. We say a linear map $\phi : \matr{n} \to \matr{n}$ is \emph{$(U,V)$-covariant} if
\begin{equation}\label{eq:CovarianceConditionUV}
	\phi\circ\mathrm{Ad}_{U(g)} = \mathrm{Ad}_{V(g)}\circ\phi
\end{equation}
for all $g\in\mathcal{G}$. Also, let us define a matrix-valued function $\hat{\alpha}_{U,V} : \mathcal{G}\to\matr{n^2}$ by
\begin{equation}
	\hat{\alpha}_{U,V}(g) = [\alpha^{U,V}_{ij}(g)], \quad \alpha^{U,V}_{ij}(g) = \hsiprod{F_i}{F_{j}^{\prime}(g)},
\end{equation} 
for
\begin{equation}
	F_{i}^{\prime}(g) = V(g)^{-1}F_i U(g)
\end{equation}
constituting for a new orthonormal basis in $\matr{n}$. We have the following result:

\begin{proposition}\label{prop:CovUVcommAlpha}
A linear map $\phi : \matr{n}\to\matr{n}$ is $(U,V)$-covariant if and only if $\hat{c}_\phi$ commutes with $\hat{\alpha}_{U,V}(g)$ for all $g\in\mathcal{G}$. In result, a linear subspace $\mathcal{C}_{U,V}$ of all $(U,V)$-covariant maps on $\matr{n}$ is isomorphic to $\hat{\alpha}_{U,V}(\mathcal{G})^\prime$.
\end{proposition}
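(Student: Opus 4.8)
The plan is to translate the operator identity \eqref{eq:CovarianceConditionUV} into a coordinate statement about the matrix $\hat{c}_\phi$ by expanding everything in the Frobenius basis. First I would write the map $\phi$ in its operator sum form \eqref{eq:phiA} and compute $\mathrm{Ad}_{V(g)}\circ\phi$ and $\phi\circ\mathrm{Ad}_{U(g)}$ acting on an arbitrary $A\in\matr{n}$. Using the substitution $F_i'(g) = V(g)^{-1}F_i U(g)$ and unitarity of $U(g)$, $V(g)$, the composition $\phi\circ\mathrm{Ad}_{U(g)}$ should rearrange into $\sum_{ij} c^\phi_{ij}\, V(g) F_i' (g)\, A\, F_j'(g)^{\hadj}\, V(g)^{-1}$ up to a conjugation, so that after stripping the outer $\mathrm{Ad}_{V(g)}$ the covariance condition becomes the requirement that $\phi$ has the \emph{same} coefficient matrix whether expressed in the basis $\{F_i\}$ or in the basis $\{F_i'(g)\}$.

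Next I would invoke the transformation rule for the coefficient matrix under a change of orthonormal basis: if $F_i'(g) = \sum_k \hat\alpha_{U,V}(g)_{ki} F_k$ (which is exactly the definition of $\alpha^{U,V}_{ij}(g) = \hsiprod{F_i}{F_j'(g)}$, since $\{F_k\}$ is Frobenius-orthonormal), then the operator sum representation $\sum_{ij} c^\phi_{ij} F_i A F_j^{\hadj}$ rewritten in the primed basis has coefficient matrix $\hat\alpha_{U,V}(g)^{-1}\,\hat{c}_\phi\,\hat\alpha_{U,V}(g)$ — here one uses that $\hat\alpha_{U,V}(g)$ is unitary because it is the change-of-basis matrix between two orthonormal bases, hence its inverse equals its adjoint and the conjugation $F_i\mapsto F_i'(g)$ on coefficients is implemented by conjugation by $\hat\alpha_{U,V}(g)$. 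Since the coefficient matrix of a map in a fixed basis is unique \cite{Szczygielski2023}, the covariance condition \eqref{eq:CovarianceConditionUV} holding for a given $g$ is equivalent to $\hat\alpha_{U,V}(g)^{-1}\hat{c}_\phi \hat\alpha_{U,V}(g) = \hat{c}_\phi$, i.e.\ to $[\hat{c}_\phi, \hat\alpha_{U,V}(g)] = 0$. Demanding this for all $g\in\mathcal{G}$ gives the first assertion.

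For the second assertion, the map $\phi \mapsto \hat{c}_\phi$ is a linear isomorphism from $B(\matr{n},\matr{n})$ onto $\matr{n^2}$ once the Frobenius basis is fixed (again by the uniqueness cited in \cite{Szczygielski2023}), and by the first part it carries $\mathcal{C}_{U,V}$ bijectively onto $\{\hat{c}\in\matr{n^2} : [\hat{c},\hat\alpha_{U,V}(g)]=0 \ \forall g\in\mathcal{G}\} = \hat\alpha_{U,V}(\mathcal{G})'$, the commutant of the set $\hat\alpha_{U,V}(\mathcal{G})$ inside $\matr{n^2}$. This is manifestly a linear subspace, so $\mathcal{C}_{U,V} \cong \hat\alpha_{U,V}(\mathcal{G})'$ as claimed.

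The main obstacle I anticipate is purely bookkeeping: getting the conjugation-versus-inverse-conjugation and the index placement in the change-of-basis formula exactly right, in particular verifying that $F_i'(g) = \sum_k \alpha^{U,V}_{ki}(g) F_k$ with the indices in the order that makes the transformed coefficient matrix come out as $\hat\alpha^{-1}\hat{c}_\phi\hat\alpha$ rather than its transpose or inverse. One must also check that $\hat\alpha_{U,V}(g)$ is genuinely unitary — this follows because $\hsiprod{F_i'(g)}{F_j'(g)} = \hsiprod{F_i}{F_j} = \delta_{ij}$ by unitarity of $U(g)$ and $V(g)$ together with the trace-cyclicity defining $\hsiprod{\cdot}{\cdot}$, so the columns of $\hat\alpha_{U,V}(g)$ form an orthonormal set. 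Everything else is a routine expansion, and no analytic subtlety arises since $\matr{n}$ is finite-dimensional.
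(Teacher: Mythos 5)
Your proposal is correct and follows essentially the same route as the paper: insert the operator sum form \eqref{eq:phiA} into the covariance identity, expand $F_i'(g)=V(g)^{-1}F_iU(g)$ in the Frobenius basis to obtain $\hat{c}_\phi=\hat{\alpha}_{U,V}(g)\,\hat{c}_\phi\,\hat{\alpha}_{U,V}(g)^{\hadj}$, and use unitarity of $\hat{\alpha}_{U,V}(g)$ together with the uniqueness of the coefficient matrix to conclude the commutation and the identification of $\mathcal{C}_{U,V}$ with the commutant. The index-placement worry you flag is immaterial, since $\hat{c}_\phi=X\hat{c}_\phi X^{\hadj}$ and $\hat{c}_\phi=X^{\hadj}\hat{c}_\phi X$ are both equivalent to $\comm{\hat{c}_\phi}{X}=0$ for unitary $X$.
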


\begin{proof}
The covariance condition \eqref{eq:CovarianceConditionUV} yields
\begin{equation}\label{eq:PhiAdPhiAd}
	\phi = \mathrm{Ad}^{-1}_{V(g)} \circ \phi \circ \mathrm{Ad}_{U(g)}
\end{equation}
for all $g \in \mathcal{G}$. Putting \eqref{eq:phiA} into \eqref{eq:PhiAdPhiAd} and then expanding new matrices $F_{i}^{\prime}(g)$ in the Frobenius basis, $F_{i}^{\prime}(g) = \sum_{j}\hsiprod{F_j}{F_{i}^{\prime}(g)}F_j$, we deduce
\begin{equation}
	c^{\phi}_{ij} = \sum_{kl} \alpha^{U,V}_{ik}(g) c^{\phi}_{kl} \overline{\alpha^{U,V}_{jl}(g)}, \quad \text{or} \quad \hat{c}_\phi = \hat{\alpha}_{U,V}(g) \hat{c}_\phi {\hat{\alpha}(g)}_{U,V}^{\hadj}
\end{equation}
after easy algebra. Since $\hat{\alpha}_{U,V}(g)$ is clearly unitary, $\hat{c}_\phi$ and $\hat{\alpha}_{U,V}(g)$ commute. The claim then follows after noting that $\hat{c}_\phi$ identifies $\phi$ uniquely.
\end{proof}

For covariance w.r.t.~the diagonal unitary matrices as discussed here, $U = V$ is given by \eqref{eq:UmaxComm}. The subspace $\mathcal{C}_U$ of all covariant maps is then determined by structure of $\hat{\alpha}_{U,U}(g) \equiv \hat{\alpha}_U (g)$ and appropriate commutant can be found with a bit of effort: it is shown in the Proposition \ref{prop:AlphaStructure} (Appendix \ref{app:SideTheorems}) that each $\hat{\alpha}_U(g)$ has the following block structure
\begin{equation}\label{eq:AlphaGDef}
	\hat{\alpha}_U(g) = \left( \begin{array}{ccc} \cos{\Delta} & -\sin{\Delta} & 0 \\ \sin{\Delta} & \cos{\Delta} & 0 \\ 0 & 0 & I_n \end{array}\right),
\end{equation}
where $I_n \in \matr{n}$ is an identity matrix, and
\begin{equation}\label{eq:DeltaDef}
	\Delta = \operatorname{diag}{\{x_{\mu(k)}-x_{\nu (k)}\}}, \quad 1\leqslant k \leqslant \binom{n}{2}
\end{equation}
is determined by $g$; here notation $\mu(k)$, $\nu(k)$ denotes a bijection $k\mapsto (\mu,\nu)$ which defines an ordering of basis $F_i$ as mentioned in Section \ref{sec:FrobeniusBasis}.

Let $z\in \complexes\setminus\{0\}$. We say that the set $S\subset\complexes$ is \emph{$z$-congruence free} when no two different $z_1$, $z_2$ exist in $S$ such that $z_1 - z_2 = k z$ for any $k\in\integers$, or when $S + (\integers \setminus\{0\}) z$ has no intersection with $S$. We make the following observation:

\begin{proposition}\label{prop:Ag}
For every $g\in\mathcal{G}$ there exists an anti-Hermitian matrix $A(g)$ such that $\hat{\alpha}_U(g) = e^{A(g)}$ and $\spec{A(g)}$ is $2\pi i$-congruence free.
\end{proposition}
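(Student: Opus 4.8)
The plan is to build $A(g)$ from a spectral decomposition of $\hat{\alpha}_U(g)$, committing to a single branch of the complex logarithm, rather than taking the ``obvious'' real antisymmetric generator that one reads off the block form \eqref{eq:AlphaGDef}. First I would observe that $\hat{\alpha}_U(g)$ is unitary --- this is immediate from Proposition \ref{prop:CovUVcommAlpha}, or directly from \eqref{eq:AlphaGDef} --- hence normal, so it admits a spectral decomposition $\hat{\alpha}_U(g) = \sum_{j=1}^{r}\lambda_j Q_j$ into its \emph{distinct} eigenvalues $\lambda_1,\dots,\lambda_r$, all lying on the unit circle, together with the associated orthogonal spectral projections $Q_j\in\matr{n^2}$ satisfying $Q_j^\hadj = Q_j$, $Q_j Q_k = \delta_{jk} Q_j$ and $\sum_{j=1}^{r} Q_j = I$.

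Next I would fix, for each unit-modulus $\lambda$, its unique argument $\vartheta(\lambda)\in[0,2\pi)$ with $\lambda = e^{i\vartheta(\lambda)}$ (so $\vartheta(1) = 0$), and define $A(g) = i\sum_{j=1}^{r}\vartheta(\lambda_j)\,Q_j$. Three short verifications then finish the proof. \emph{(i)} Since the $\vartheta(\lambda_j)$ are real and the $Q_j$ self-adjoint, $A(g)^\hadj = -A(g)$, so $A(g)$ is anti-Hermitian. \emph{(ii)} Since the $Q_j$ are mutually orthogonal projections with $\sum_j Q_j = I$, the exponential acts blockwise, so $e^{A(g)} = \sum_{j} e^{i\vartheta(\lambda_j)}Q_j = \sum_j \lambda_j Q_j = \hat{\alpha}_U(g)$. \emph{(iii)} One has $\spec{A(g)} = \{i\vartheta(\lambda_j) : 1\leqslant j\leqslant r\}$, and these points are pairwise distinct because $\vartheta$ is injective on the circle and the $\lambda_j$ are distinct; hence any two of them differ by $i\delta$ with $\delta\in(-2\pi,2\pi)\setminus\{0\}$, and $\delta/(2\pi)\notin\integers$, so no two distinct elements of $\spec{A(g)}$ differ by a nonzero integer multiple of $2\pi i$. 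That is exactly $2\pi i$-congruence freeness.

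The one place where care is genuinely required --- and the reason the statement is not vacuous --- is the choice of branch in the second step. The naive anti-Hermitian (in fact real antisymmetric) logarithm suggested by \eqref{eq:AlphaGDef}, which also satisfies $e^{A(g)} = \hat{\alpha}_U(g)$, has spectrum $\{0\}\cup\{\pm i(x_{\mu(k)} - x_{\nu(k)}) : 1\leqslant k\leqslant\binom{n}{2}\}$; this set need not be $2\pi i$-congruence free --- for instance it fails whenever some difference $x_{\mu(k)} - x_{\nu(k)}$ lies in $\pi\integers$, or whenever two such differences differ by an element of $2\pi\integers$. Passing to the spectral decomposition repairs this by collapsing all coincident eigenvalues of $\hat{\alpha}_U(g)$ to single points and pinning the logarithm to the branch of $\arg$ valued in $[0,2\pi)$, which is precisely what confines $\spec{A(g)}$ to $i[0,2\pi)$ and yields congruence freeness. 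Beyond this, nothing is delicate; in particular the explicit block form \eqref{eq:AlphaGDef} is not needed at all once unitarity of $\hat{\alpha}_U(g)$ is in hand.
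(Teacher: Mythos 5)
Your proof is correct, and it takes a genuinely different route from the paper's. The paper works with the explicit real antisymmetric generator read off the block form \eqref{eq:AlphaGDef} (off-diagonal blocks $\pm\Delta$, zero elsewhere), restricts the parameters to the hypercube $[0,2\pi)^n$ so that $r(A(g))=\max_{j,k}|x_j-x_k|<2\pi$, and concludes $2\pi i$-congruence freeness from this spectral radius bound. You instead diagonalize the unitary $\hat{\alpha}_U(g)$ and define $A(g)$ through the principal branch $\vartheta\in[0,2\pi)$ of the argument, which confines $\spec{A(g)}$ to $i[0,2\pi)$ and makes congruence freeness immediate. Your construction is the more robust of the two: as you yourself observe, the naive antisymmetric logarithm is precisely the paper's choice, its spectrum is $\{0\}\cup\{\pm i(x_{\mu(k)}-x_{\nu(k)})\}$, and a bound $r(A(g))<2\pi$ on the moduli does not by itself exclude two eigenvalues differing by exactly $2\pi i$ --- e.g.~$x_1-x_2=\pi$ puts both $i\pi$ and $-i\pi$ in the spectrum. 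So your branch-of-logarithm argument proves the proposition precisely where the paper's spectral-radius step is incomplete. What the paper's approach buys in exchange is only an explicit closed form for $A(g)$ in terms of $\Delta$; your $A(g)$ is less explicit, but the subsequent commutant computation works with $\hat{\alpha}_U(g)$ directly, so nothing downstream appears to need the explicit form.
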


\begin{proof}
Restricting the mapping $\reals^n \mapsto g = (e^{ix_j})_{j=1}^{n}$ to the hypercube $[0,2\pi )^n$ makes the mapping bijective, so it is always possible to represent matrices $U(g)$ by vectors $(x_j)$ such that $0 \leqslant x_j < 2\pi$. Matrix $\hat{\alpha}_U(g)$ given in \eqref{eq:AlphaGDef} almost resembles the rotation matrix so we can deduce the form of $A(g)$,
\begin{equation}
	A(g) = \left( \begin{array}{ccc} 0 & -\Delta & 0 \\ \Delta & 0 & 0 \\ 0 & 0 & 0 \end{array}\right).
\end{equation}
Naturally, $(x_j)\in [0,2\pi )^n$ implies $r(\Delta) < 2\pi$, with $r(T) = \max_{\lambda\in\spec{T}}{|\lambda|}$ being the spectral radius of $T$. One then confirms with power series expansion that indeed $e^{A(g)} = \hat{\alpha}_U(g)$ so $A(g)$ is the matrix logarithm of $\hat{\alpha}_U(g)$, clearly anti-Hermitian. The non-zero eigenvalues $\lambda$ of $A(g)$ satisfy
\begin{equation}
	0 = \det{\left(\begin{array}{cc}-\lambda I & -\Delta \\ \Delta & -\lambda I\end{array}\right)} = \det{(\lambda^2 I + \Delta^2)},
\end{equation}
so $\lambda\in\spec{A(g)}$ if and only if $\lambda^2 \in \spec{-\Delta^2}$. This means all $\lambda$ must be of a form
\begin{equation}
	\lambda = \pm i |x_j - x_k|
\end{equation}
for some $j<k$. However
\begin{equation}
	r(A(g)) = \max_{j,k}{|x_j - x_k|} = r(|\Delta|) < 2\pi ,
\end{equation}
so there exists no pair $(\lambda_1,\lambda_2)$ of eigenvalues in $\spec{A(g)}$ such that $\lambda_1 - \lambda_2 = 2k\pi i$ for $k\in\integers\setminus\{0\}$ i.e.~$\spec{A(g)}$ is $2\pi i$-congruence free.
\end{proof}

The $2\pi i$-congruence freedom of $\spec{A(g)}$ allows to simplify calculations significantly:

\begin{proposition}
Let $A(g)$ be as in Proposition \ref{prop:Ag}. Then $\comm{\hat{c}_\phi}{\hat{\alpha}_U(g)} = 0$ if and only if $\comm{\hat{c}_\phi}{A(g)}=0$. 
\end{proposition}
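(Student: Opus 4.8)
The plan is to use the standard fact that a matrix and a function of that matrix generated by an entire power series commute with exactly the same operators, provided the function is "faithful" on the spectrum in the relevant sense. Concretely, one direction is trivial: if $\comm{\hat{c}_\phi}{A(g)} = 0$, then $\hat{c}_\phi$ commutes with every power $A(g)^k$, hence with the convergent series $e^{A(g)} = \hat{\alpha}_U(g)$. The substance is the converse, and here the $2\pi i$-congruence freedom of $\spec{A(g)}$ established in Proposition \ref{prop:Ag} is exactly what is needed.

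First I would observe that since $A(g)$ is anti-Hermitian it is diagonalizable with a spectral decomposition $A(g) = \sum_{\lambda\in\spec{A(g)}} \lambda \proj{\lambda}$, where the $\proj{\lambda}$ are the orthogonal spectral projections. Correspondingly $\hat{\alpha}_U(g) = e^{A(g)} = \sum_{\lambda} e^{\lambda}\proj{\lambda}$. The key point is that the map $\lambda\mapsto e^\lambda$ is \emph{injective} on $\spec{A(g)}$: if $\lambda_1,\lambda_2\in\spec{A(g)}$ with $e^{\lambda_1} = e^{\lambda_2}$, then $\lambda_1 - \lambda_2 \in 2\pi i\,\integers$, and by $2\pi i$-congruence freedom this forces $\lambda_1 = \lambda_2$. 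Hence $\hat{\alpha}_U(g)$ has the same eigenprojections $\proj{\lambda}$ as $A(g)$, with distinct eigenvalues $e^\lambda$. Now suppose $\comm{\hat{c}_\phi}{\hat{\alpha}_U(g)} = 0$. Because the $e^\lambda$ are pairwise distinct, each spectral projection $\proj{\lambda}$ is itself a polynomial in $\hat{\alpha}_U(g)$ (Lagrange interpolation on the eigenvalues), so $\hat{c}_\phi$ commutes with every $\proj{\lambda}$, and therefore with $A(g) = \sum_\lambda \lambda\proj{\lambda}$. This gives $\comm{\hat{c}_\phi}{A(g)} = 0$.

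Alternatively, and perhaps more cleanly for the write-up, I would invoke the fact that for any matrix $T$ the double commutant (bicommutant) $\{T\}^{\prime\prime}$ is the algebra generated by $T$ and the identity, and that $\proj{\lambda}\in\{T\}^{\prime\prime}$ for $T = \hat{\alpha}_U(g)$ since the $\proj{\lambda}$ are polynomials in $T$; then $A(g) = \sum_\lambda\lambda\proj{\lambda}\in\{\hat{\alpha}_U(g)\}^{\prime\prime}$, and anything commuting with $\hat{\alpha}_U(g)$ lies in $\{\hat{\alpha}_U(g)\}^\prime \subseteq \{A(g)\}^\prime$ (since $A(g)$ is in the bicommutant). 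Conversely $\hat{\alpha}_U(g) = e^{A(g)}\in\{A(g)\}^{\prime\prime}$ always, giving the reverse inclusion of commutants; combining, $\{\hat{\alpha}_U(g)\}^\prime = \{A(g)\}^\prime$, which contains the desired equivalence as a special case applied to $\hat{c}_\phi$.

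The main obstacle — really the only nontrivial ingredient — is establishing that $\lambda\mapsto e^\lambda$ separates the points of $\spec{A(g)}$, so that no two distinct eigenvalues of $A(g)$ are collapsed by exponentiation into a single eigenvalue of $\hat{\alpha}_U(g)$; without this the spectral projections of $\hat{\alpha}_U(g)$ would be coarser than those of $A(g)$ and a matrix commuting with $\hat{\alpha}_U(g)$ need not commute with $A(g)$. But this is precisely the content of the $2\pi i$-congruence freedom from Proposition \ref{prop:Ag}, so the proof reduces to quoting that proposition and assembling the spectral-calculus argument above.
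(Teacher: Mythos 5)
Your proof is correct, but it follows a genuinely different route from the paper's. The paper deliberately proves a general Banach-space statement (Theorem~\ref{thm:CommFunctionsOperatorsExp} in the appendix, itself a corollary of Theorem~\ref{thm:CommFunctionsOperators}): for bounded operators $A,B$ on a Banach space with $\spec{A}$ $2\pi i$-congruence free, $\comm{e^A}{B}=0$ iff $\comm{A}{B}=0$. That result is established via holomorphic functional calculus, injectivity of $z\mapsto e^z$ on a neighbourhood of the spectrum, and Runge-type rational approximation of the inverse function (following Wermuth), and the proposition is then a one-line application together with Proposition~\ref{prop:Ag}. You instead exploit the specific finite-dimensional structure: $A(g)$ is anti-Hermitian, hence normal with spectral resolution $A(g)=\sum_\lambda \lambda\proj{\lambda}$, the $2\pi i$-congruence freedom makes $\lambda\mapsto e^\lambda$ injective on $\spec{A(g)}$ so that $e^{A(g)}$ has the \emph{same} spectral projections, each $\proj{\lambda}$ is a Lagrange interpolation polynomial in $e^{A(g)}$, and therefore $\{\hat{\alpha}_U(g)\}^\prime=\{A(g)\}^\prime$. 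Both arguments correctly identify the $2\pi i$-congruence freedom as the essential hypothesis (without it the exponential could merge distinct eigenvalues and coarsen the spectral projections, exactly as you note). Your version is more elementary and entirely self-contained for $\matr{n^2}$ — no Runge's theorem, no contour integrals — but it leans on diagonalizability of $A(g)$, whereas the paper's appendix theorem applies to arbitrary bounded operators on a Banach space and to general injective holomorphic functions, not just the exponential of a normal matrix. For the present application either suffices; the paper simply chose to factor the work through the more general lemma.
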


\begin{proof}
It is proved in Theorem \ref{thm:CommFunctionsOperatorsExp} (Appendix \ref{app:SideTheorems}), that for any two bounded linear operators $A,B$ on a Banach space such that $\spec{A}$ is $2\pi i$-congruence free, we have $\comm{e^A}{B} = 0$ if and only if $\comm{A}{B}=0$. The claim then follows by Proposition \ref{prop:Ag}.
\end{proof}

With the $2\pi i$-congruence freedom of the spectrum in our disposal, finding the commutant of $\hat{\alpha}_U(\mathcal{G})$ is fairly easy:

\begin{lemma}
$M\in\matr{n^2}$ satisfies $\comm{M}{\hat{\alpha}_U(g)}=0$ for all $g\in\mathcal{G}$ if and only if
\begin{equation}\label{eq:Mform}
	M = \left(\begin{array}{ccc}A_1 & -A_2 & 0 \\ A_2 & A_1 & 0 \\ 0 & 0 & A_3\end{array}\right)
\end{equation}
for arbitrary matrices $A_1, A_2 \in \operatorname{Diag}_{n(n-1)/2}(\complexes)$ and $A_3 \in \matr{n}$.
\end{lemma}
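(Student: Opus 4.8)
The plan is to handle the ``if'' direction by a direct block computation and the ``only if'' direction by substituting cleverly chosen group elements into the commutation relation. For the ``if'' part, one takes $M$ of the form \eqref{eq:Mform} with $A_1,A_2$ diagonal of size $\binom{n}{2}$ and $A_3\in\matr{n}$ and verifies $M\hat{\alpha}_U(g)=\hat{\alpha}_U(g)M$ block by block: the upper-left $2\times2$ block of $\hat{\alpha}_U(g)$ in \eqref{eq:AlphaGDef} is built from the mutually commuting diagonal matrices $\cos\Delta,\sin\Delta$, and since diagonal matrices of the same size commute, every resulting $2\times2$-block identity is trivial, while the lower-right block contributes $A_3 I_n=I_n A_3$. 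No choice of $g$ is needed here.

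For the converse, set $m=\binom{n}{2}=n(n-1)/2$ and decompose $\complexes^{n^2}=\complexes^{m}\oplus\complexes^{m}\oplus\complexes^{n}$ in accordance with the block pattern of \eqref{eq:AlphaGDef}, writing $M=[M_{\alpha\beta}]_{\alpha,\beta=1}^{3}$ with $M_{11},M_{12},M_{21},M_{22}\in\matr{m}$, $M_{33}\in\matr{n}$, and the remaining blocks rectangular. By the preceding proposition, $\comm{M}{\hat{\alpha}_U(g)}=0$ for all $g\in\mathcal{G}$ is equivalent to $\comm{M}{A(g)}=0$ for all $g$, with $A(g)$ the simple off-diagonal matrix from the proof of Proposition \ref{prop:Ag} (alternatively one may work with $\hat{\alpha}_U(g)$ directly). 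Expanding this commutator block-wise gives, for every $g$, nine matrix equations involving $\Delta=\operatorname{diag}\{x_{\mu(k)}-x_{\nu(k)}\}$. Four of them read $\Delta M_{13}=\Delta M_{23}=M_{31}\Delta=M_{32}\Delta=0$; choosing $g$ with all numbers $x_{\mu(k)}-x_{\nu(k)}$ nonzero (e.g.\ $x_j=j$) makes $\Delta$ invertible and forces $M_{13}=M_{23}=M_{31}=M_{32}=0$, producing the zero blocks in \eqref{eq:Mform}. The four remaining equations couple $M_{11}$ with $M_{22}$ and $M_{12}$ with $M_{21}$; passing to entries, with $\lambda_k:=x_{\mu(k)}-x_{\nu(k)}$, they become the scalar relations $(M_{11})_{kl}\lambda_l=\lambda_k(M_{22})_{kl}$, $(M_{22})_{kl}\lambda_l=\lambda_k(M_{11})_{kl}$, $(M_{12})_{kl}\lambda_l=-\lambda_k(M_{21})_{kl}$, $(M_{21})_{kl}\lambda_l=-\lambda_k(M_{12})_{kl}$, required to hold for all $g$.

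The core of the argument is the choice of $g$. For $k\neq l$ I would exploit that the index pairs $\{\mu(k),\nu(k)\}$ and $\{\mu(l),\nu(l)\}$ are \emph{distinct} (by the way the Frobenius basis is ordered in Section \ref{sec:FrobeniusBasis}), so the two linear functionals $x\mapsto\lambda_k$ and $x\mapsto\lambda_l$ on $\reals^{n}$ are linearly independent; hence a suitable $g\in\mathcal{G}$ realizes any prescribed pair $(\lambda_k,\lambda_l)$. Multiplying the first relation by $\lambda_l$ and using the second yields $(M_{11})_{kl}(\lambda_l^{2}-\lambda_k^{2})=0$, so a $g$ with $\lambda_k^{2}\neq\lambda_l^{2}$ gives $(M_{11})_{kl}=0$, and then $(M_{22})_{kl}=(M_{21})_{kl}=0$ follow from the same equations with a $g$ for which $\lambda_k\neq0$; the analogous manipulation of the last two relations gives $(M_{12})_{kl}=0$. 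Thus $M_{11},M_{12},M_{21},M_{22}$ are all diagonal. Taking finally $k=l$ (so $\lambda_k=\lambda_l=:\theta$ with $\theta\neq0$ free) collapses the relations to $(M_{11})_{kk}=(M_{22})_{kk}$ and $(M_{12})_{kk}=-(M_{21})_{kk}$, which, on setting $A_1:=M_{11}=M_{22}$, $A_2:=M_{21}=-M_{12}$ and $A_3:=M_{33}$, is precisely \eqref{eq:Mform}. I expect the only genuinely delicate point to be the claim that $(\lambda_k,\lambda_l)$ can be chosen independently for $k\neq l$, i.e.\ the linear independence of the functionals attached to two distinct unordered index pairs; the rest is routine linear algebra and bookkeeping over the nine block equations.
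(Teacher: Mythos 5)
Your proposal is correct and follows essentially the same route as the paper: reduce to $\comm{M}{A(g)}=0$ via the preceding proposition, use an invertible $\Delta$ to kill the corner blocks, and then argue entrywise that the four upper-left blocks are diagonal with $M_{11}=M_{22}$, $M_{12}=-M_{21}$ (the paper phrases the entrywise step as $\comm{\Delta^2}{M_{ij}}=0$, which is your scalar identity $(M_{ij})_{kl}(\lambda_k^2-\lambda_l^2)=0$ in matrix form). The ``delicate point'' you flag — that distinct unordered index pairs give linearly independent functionals $x\mapsto x_{\mu(k)}-x_{\nu(k)}$, so $\lambda_k^2\neq\lambda_l^2$ is achievable — is exactly the fact the paper uses implicitly when it says \eqref{eq:MijklEq} must hold for all choices of $(x_j)$, and you resolve it correctly.
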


\begin{proof}
Let $M = [M_{ij}]$ where sizes of blocks $M_{ij}$ correspond with those of $\hat{\alpha}_U(g)$. Equation $\comm{M}{\hat{\alpha}_U(g)}=0$ yields a system of matrix equations
\begin{subequations}
	\begin{equation}
		M_{11}\Delta = \Delta M_{22}, \quad \Delta M_{11} = M_{22}\Delta,
	\end{equation}
	\begin{equation}
		M_{12}\Delta = -\Delta M_{21}, \quad \Delta M_{12} = -M_{21}\Delta,
	\end{equation}
	\begin{equation}
		\Delta M_{23} = \Delta M_{13} = M_{32}\Delta = M_{31}\Delta = 0,
	\end{equation}
\end{subequations}
which must hold for all possible $\Delta$, specified again by \eqref{eq:DeltaDef}. In particular, when $x_j$ are chosen all distinct, $\Delta$ is invertible and so $M_{23}$, $M_{32}$, $M_{13}$ and $M_{31}$ must all vanish. Moreover, we must have, for $\Delta$ invertible,
\begin{subequations}
\begin{equation}
	M_{11} = \Delta M_{22}\Delta^{-1} = \Delta^{-1}M_{22}\Delta,
\end{equation}
\begin{equation}
	M_{22} = \Delta M_{11}\Delta^{-1} = \Delta^{-1}M_{11}\Delta,
\end{equation}
\begin{equation}
	M_{12} = -\Delta M_{21}\Delta^{-1} = -\Delta^{-1}M_{21}\Delta,
\end{equation}
\begin{equation}
	M_{21} = -\Delta M_{12}\Delta^{-1} = -\Delta^{-1}M_{12}\Delta,
\end{equation}
\end{subequations}
which yield $\comm{\Delta^2}{M_{ij}} = 0$, or $M_{ij} = \Delta^2 M_{ij} \Delta^{-2}$, for all $i,j\in\{1,2\}$. Let us denote $M_{ij} = [m^{(i,j)}_{kl}]_{kl}$. Since $\Delta^{n}$ is diagonal for $n\in\integers$, equality $M_{ij} = \Delta^2 M_{ij} \Delta^{-2}$ yields
\begin{equation}\label{eq:MijklEq}
	m^{(i,j)}_{kl} = \left(\frac{x_{\mu(k)}-x_{\nu(k)}}{x_{\mu(l)}-x_{\nu(l)}}\right)^2 m^{(i,j)}_{kl},
\end{equation}
for all $k\neq l$. For \eqref{eq:MijklEq} to hold for all possible choices of vectors $(x_j)$ we need $m^{(i,j)}_{kl} = 0$, i.e.~all blocks $M_{ij}$ must be diagonal. This in turn yields $M_{11} = M_{22}$, $M_{12}=-M_{21}$ and $M_{33}$ can be arbitrary. This shows $M$ indeed has the claimed form.
\end{proof}

Clearly, decomposable maps require conditions for both covariance and conjugate covariance by Theorems \ref{thm:MainResultDec} and \ref{thm:UcovUbarCov}, so let us now focus on a conjugate covariant case. We will call a map $\phi : \matr{n}\to\matr{n}$ \emph{conjugate covariant} if and only if it is $(U,\overline{U})$-covariant, i.e.~when it satisfies
\begin{equation}\label{eq:PhiConjCov}
	\phi \circ \mathrm{Ad}_{U(g)}= \mathrm{Ad}_{\overline{U}(g)}\circ\phi
\end{equation}
for all $g\in\mathcal{G}$, where $\overline{U}(g) = \sum_{j=1}^{n} e^{-i x_j} E_{jj}$. The matrix $\hat{\beta}_U(g)\equiv \hat{\alpha}_{U,\overline{U}}(g)$ in this case admits a structure (see again Proposition \ref{prop:AlphaStructure} in Appendix \ref{app:SideTheorems})
\begin{equation}
	\hat{\beta}_U(g) = \left( \begin{array}{ccc} e^{i\Theta} & 0 & 0 \\ 0 & e^{i\Theta} & 0 \\ 0 & 0 & R(g) \end{array}\right),
\end{equation}
where
\begin{align}
	\Theta &= \operatorname{diag}{\{x_{\mu(j)}+x_{\nu(j)}\}}, \quad 1\leqslant j \leqslant \binom{n}{2}\\
	R (g) &= [\langle F^{\mathrm{d}}_{i},{F^{\mathrm{d}}_{j}}^{\prime}\rangle_2]_{ij}, \quad 1\leqslant i,j\leqslant n,\label{eq:ThetaRdef}
\end{align}
with $R(g)$ symmetric and unitary. And again, determining a general structure of $\hat{c}_\phi$ can be achieved with a bit of algebraic work, in a course of solving a number of linear matrix equations:

\begin{lemma}\label{lemma:MformConjCov}
$M\in\matr{n^2}$ satisfies $\comm{M}{\hat{\beta}_U(g)}=0$ for all $g\in\mathcal{G}$ if and only if
\begin{equation}\label{eq:Mform2}
	M = \left(\begin{array}{ccc}A_{11} & A_{12} & 0 \\ A_{21} & A_{22} & 0 \\ 0 & 0 & A_{33}\end{array}\right)
\end{equation}
for $A_{ij} \in \operatorname{Diag}_{n(n-1)/2}(\complexes)$ arbitrary and $A_{33} = [\kappa_{rs}]\in\matr{n}$ symmetric and specified by conditions
\begin{subequations}\label{eq:A33rs}
	\begin{align}
		\kappa_{rs} &= \frac{1}{\sqrt{rs(r+1)(s+1)}}\left( \sum_{i=1}^{r}a_i - r a_{r+1} \right) \quad \text{for } r<s<n,\\
		\kappa_{rn} &= \frac{1}{\sqrt{nr(r+1)}}\left( \sum_{i=1}^{r}a_i - r a_{r+1} \right) \quad \text{for } r<n,\\
		\kappa_{rr} &= \frac{1}{r(r+1)} \left( \sum_{i=1}^{r}a_i + r^2 a_{r+1} \right) \quad \text{for } r<n,\\
		\kappa_{nn} &= \frac{1}{n} \sum_{i=1}^{n}a_i,
	\end{align}
\end{subequations}
for arbitrary sequence $(a_i) \in \complexes^n$.
\end{lemma}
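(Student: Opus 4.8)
The plan is to compute the commutator $\comm{M}{\hat{\beta}_U(g)}$ block by block exactly as in the preceding lemma, but now keeping careful track of the more complicated structure of $\hat{\beta}_U(g)$, whose lower-right block $R(g)$ is no longer simply $I_n$. Writing $M = [M_{ij}]_{i,j=1}^3$ with block sizes matching those of $\hat{\beta}_U(g)$ (namely $\binom{n}{2}$, $\binom{n}{2}$, and $n$), the equation $\comm{M}{\hat{\beta}_U(g)} = 0$ splits into nine matrix equations. The upper-left $2\times 2$ corner gives $M_{ij} e^{i\Theta} = e^{i\Theta} M_{ij}$ for $i,j\in\{1,2\}$; since $\Theta = \operatorname{diag}\{x_{\mu(j)}+x_{\nu(j)}\}$ and the $x$'s may be chosen so that all sums $x_{\mu(j)}+x_{\nu(j)}$ are distinct, this forces each of $M_{11},M_{12},M_{21},M_{22}$ to be diagonal — giving the $A_{11},A_{12},A_{21},A_{22}$ of \eqref{eq:Mform2} with no further constraint. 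The off-diagonal blocks satisfy $M_{13}R(g) = e^{i\Theta}M_{13}$, $M_{23}R(g)=e^{i\Theta}M_{23}$, $R(g)M_{31}=M_{31}e^{i\Theta}$, $R(g)M_{32}=M_{32}e^{i\Theta}$; by comparing the behaviour of the two sides under generic choices of $g$ (the phases $e^{i(x_\mu+x_\nu)}$ versus the entries of $R(g)$, which involve differences) one concludes $M_{13}=M_{23}=M_{31}=M_{32}=0$. What remains is the single genuinely substantive equation $\comm{M_{33}}{R(g)}=0$ for all $g$, and identifying the solution space of that is the heart of the lemma.

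For the $M_{33}$ analysis I would first make $R(g)$ explicit using \eqref{eq:FiDiag}: with $F_i^{\mathrm{d}} = K_{i-n(n-1)}$ for $1\le i\le n-1$ and $F_{n^2}=\tfrac{1}{\sqrt n}I$, and with $F_i^{\prime\mathrm{d}}(g) = \overline{U}(g)^{-1}F_i^{\mathrm{d}} U(g) = U(g)^{2}F_i^{\mathrm{d}}$ (since $\overline{U}(g)^{-1} = U(g)$), the matrix $R(g)$ has entries $\langle K_r, U(g)^2 K_s\rangle_2 = \sum_{p} (K_r)_{pp}(K_s)_{pp} e^{2ix_p}$ for $r,s\le n-1$, together with the entries involving $F_{n^2}$. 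This exhibits $R(g)$ as a linear combination $\sum_p e^{2ix_p} P_p$ of fixed real symmetric matrices $P_p$ (the outer products of the diagonal-profile vectors), so $\comm{M_{33}}{R(g)}=0$ for all $g$ is equivalent to $\comm{M_{33}}{P_p}=0$ for each $p=1,\dots,n$. The matrices $P_p$ together span the same space as the projections onto the coordinate lines in the diagonal subspace, so their joint commutant is naturally parametrized by a single free vector $(a_1,\dots,a_n)\in\complexes^n$; unwinding this parametrization through the explicit change of basis between the canonical diagonal matrices $E_{pp}$ and the Gell-Mann--type basis $\{K_1,\dots,K_{n-1},\tfrac{1}{\sqrt n}I\}$ is exactly what produces the formulas \eqref{eq:A33rs}. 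Concretely, if $D = \sum_p a_p E_{pp}$ is a generic diagonal matrix in the canonical basis, then $A_{33} = [\kappa_{rs}]$ with $\kappa_{rs} = \langle F_r^{\mathrm{d}}, D\, F_s^{\mathrm{d}}\rangle_2$ (suitably indexed), and evaluating this inner product using \eqref{eq:FiDiag} yields precisely the four cases listed, the split into $r<s<n$, $r<n=s$, $r=s<n$, $r=s=n$ being just the book-keeping of which of $K_r,K_s,\tfrac{1}{\sqrt n}I$ appear. Symmetry of $A_{33}$ is automatic since $D$ and all $F_i^{\mathrm{d}}$ are real and diagonal, hence commute, so $\kappa_{rs}=\kappa_{sr}$.

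The converse direction is routine: one substitutes the claimed form \eqref{eq:Mform2}--\eqref{eq:A33rs} back into $\comm{M}{\hat{\beta}_U(g)}$ and checks it vanishes — the $2\times 2$ corner commutes with $e^{i\Theta}$ because diagonal matrices commute, the zero blocks contribute nothing, and $A_{33}$ commutes with $R(g)$ by construction since it was built as an element of the common commutant of the $P_p$. I expect the main obstacle to be purely computational stamina rather than conceptual difficulty: carrying out the diagonalization argument for the $M_{ij}$, $i,j\in\{1,2\}$ blocks (one must be slightly careful that the relevant sums $x_\mu+x_\nu$ can genuinely be made pairwise distinct as $g$ ranges over $\mathcal{G}$, which they can), ruling out the mixed blocks cleanly, and then grinding through the change-of-basis computation for $\kappa_{rs}$ with the normalization factors $\tfrac{1}{\sqrt{k(k+1)}}$ kept straight across all four cases. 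A secondary subtlety worth stating explicitly is that the free parameters $(a_i)$ are genuinely unconstrained — the map $(a_i)\mapsto [\kappa_{rs}]$ is a linear isomorphism onto the commutant, so the dimension count ($n$ free complex parameters for $A_{33}$ plus $4\binom{n}{2}$ for the diagonal blocks) matches the dimension of $\hat{\beta}_U(\mathcal{G})^\prime$, confirming no solutions are missed.
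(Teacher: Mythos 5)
Your proposal is correct and follows the same overall block-by-block strategy as the paper, but the two substantive steps are handled by genuinely different means. For the mixed blocks $M_{i3}$, $M_{3i}$ the paper vectorizes the equations $e^{i\Theta}M_{i3}R(g)^\hadj = M_{i3}$ into eigenvalue problems for $e^{i\Theta}\otimes R(g)^\hadj$ and exhibits a single $g_0$ for which $1$ is not in the spectrum; your ``generic $g$'' phase comparison is the same idea in compressed form, though your parenthetical that the entries of $R(g)$ ``involve differences'' is off --- they involve $e^{2ix_p}$, and the argument works because the linear form $x_{\mu}+x_{\nu}$ with $\mu\neq\nu$ can never coincide with $2x_p$ identically in $g$. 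The real divergence is in the $M_{33}$ step: the paper vectorizes $\mathrm{Ad}_{R(g)}$, computes its spectrum $\{1, e^{\pm 2i(x_j-x_k)}\}$, and expands the eigenvalue-$1$ eigenvector in the eigenbasis $\{r_i\}$ of $R(g)$, whereas you decompose $R(g)=\sum_p e^{2ix_p}P_p$ into $g$-independent rank-one spectral projections and use linear independence of the characters $g\mapsto e^{2ix_p}$ to reduce the problem to the joint commutant of the $P_p$. Your route is arguably cleaner --- it makes explicit the structural fact (implicit in the paper via equation \eqref{eq:rj}) that the eigenprojections of $R(g)$ do not depend on $g$, and it packages the final parametrization as $\kappa_{rs}=\hsiprod{F_r^{\mathrm{d}}}{DF_s^{\mathrm{d}}}$ for $D=\sum_p a_pE_{pp}$, which reproduces \eqref{eq:A33rs} exactly and makes the symmetry of $A_{33}$ and the injectivity of $(a_i)\mapsto[\kappa_{rs}]$ transparent; the paper's spectral route buys a self-contained verification that $1$ is always an eigenvalue of $\mathrm{Ad}_{R(g)}$ and that no $g$-dependent degeneracies enlarge the solution space. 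Both arrive at the same answer, and your dimension count is a useful sanity check the paper omits.
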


\begin{proof}
Denote again $M = [M_{ij}]$ with blocks $M_{ij}$ corresponding to those in $\hat{\beta}_U(g)$. Equation $M = \hat{\beta}_U(g) M \hat{\beta}_U(g)^\hadj$ yields a system of matrix equations
\begin{subequations}
\begin{equation}\label{eq:MijCov}
	e^{i\Theta} M_{ij} e^{-i\Theta} = M_{ij}, \quad 1\leqslant i,j \leqslant 2,
\end{equation}
\begin{equation}\label{eq:Mi3Cov}
	e^{i\Theta} M_{i3} R(g)^\hadj = M_{i3}, \quad R(g) M_{3i} e^{-i\Theta} = M_{3i}, \quad 1\leqslant i \leqslant 2,
\end{equation}
\begin{equation}\label{eq:M33Cov}
	R(g) M_{33} R(g)^\hadj = M_{33}.
\end{equation}
\end{subequations}
For case $i,j \leqslant 2$ let $M_{ij} = [m^{(i,j)}_{kl}]_{kl}$ and notice \eqref{eq:MijCov} yields
\begin{equation}
	\left(1-e^{i(x_{\mu(k)}+x_{\nu(k)}-x_{\mu(l)}-x_{\nu(l)})}\right)m^{(i,j)}_{kl} = 0
\end{equation}
which must hold for all possible choices of $(x_j)$. Since this is clearly true for $k=l$, we infer all off-diagonal terms $m^{(i,j)}_{kl}$, $k\neq l$, vanish, i.e.~all blocks $M_{ij}$ for $i,j\leqslant 2$ are diagonal. For \eqref{eq:Mi3Cov}, notice that both those matrix equations are in fact eigenequations for linear mappings 
\begin{equation}
	T_g (X) = e^{i\Theta}XR(g)^\hadj, \quad S_g (X) = R(g)Xe^{-i\Theta},
\end{equation}
for eigenvalue $1$. Recall that every such mapping $X \mapsto AXB$, where $ X,A,B\in\matr{n}$, can be isomorphically represented as a linear operator $A\otimes B^\transpose : \complexes^{n^2}\to\complexes^{n^2}$ under the transformation of \emph{vectorization} (or \emph{flattening}) of matrices \cite{Bengtsson2017}. Hence, \eqref{eq:Mi3Cov} is equivalent to the pair of mutually adjoint eigenequations
\begin{equation}
	\left(e^{i\Theta}\otimes R(g)^\hadj\right)x = x, \quad \left(R(g)\otimes e^{-i\Theta}\right)x = x.
\end{equation}
However,
\begin{align}
	\spec{T_g - \id{}} &= \spec{\left(e^{i\Theta}\otimes R(g)^\hadj - I_{n^2}\right)} \\
	&= -1+\{e^{-i(x_{\mu(k)}+x_{\nu(k)}-2x_j)}\} \nonumber
\end{align}
and one can always find such vector $(x_j)$, and such $g_0 \in\mathcal{G}$, that $\spec{T_{g_0} - \id{}}$ does not contain $0$, i.e.~the resolvent operator $(T_{g_0} - \id{})^{-1}$ exists and $T_{g_0}$ has no eigenvalue $1$. The equation $T_{g_0} (M_{i3}) = M_{i3}$ can be therefore satisfied only when $M_{i3}=0$. Since matrix $M$ is expected to satisfy \eqref{eq:Mi3Cov} for all $g\in\mathcal{G}$, we infer $M_{i3} = 0$ is the only case. The same argument then applies to $S_g$ as its spectrum is simply a complex conjugate of $\spec{T_g}$ and so $M_{3i} = 0$ as well.

For the remaining equation \eqref{eq:M33Cov}, note that \eqref{eq:ThetaRdef} implies $R(g)$ is the matrix of a linear map $\mathcal{U}_g (A)= U(g)A U(g)$ restricted to a subspace $\mathrm{Diag}_{n}(\complexes)$, with matrix elements computed in basis $\{F_{i}^{\mathrm{d}}\}$. It is straightforward to check that the spectrum of this map is
\begin{equation}
	\spec{\left.\mathcal{U}_g\right|_{\mathrm{Diag}_{n}(\complexes)}} = \spec{R(g)} = \{e^{2i x_j}, 1\leqslant j \leqslant n\},
\end{equation}
its eigenvectors are $E_{ii}$ and in result, eigenvectors $r_i$ of $R(g)$ are
\begin{equation}\label{eq:rj}
	r_i = \sum_{k=1}^{n} \hsiprod{F_{k}^{\mathrm{d}}}{E_{ii}} e_k = \sum_{k=1}^{n} (F_{k}^{\mathrm{d}})_{ii}e_k ,
\end{equation}
for $e_k$ the standard basis spanning $\complexes^n$ and $(F_{k}^{\mathrm{d}})_{ii}$ the $i$-th diagonal element of $F_{k}^{\mathrm{d}}$. The equation \eqref{eq:M33Cov} is an eigenequation for a map $\mathrm{Ad}_{R(g)}$ for eigenvalue $1$. Again, with vectorization one then confirms that
\begin{equation}
	\spec{\mathrm{Ad}_{R(g)}} = \spec{R(g)\otimes R(g)^\hadj} = \{1, e^{\pm 2i(x_j - x_k)} : 1\leqslant j < k\leqslant n\}
\end{equation}
and so $1\in\spec{\mathrm{Ad}_{R(g)}}$ for all $g\in\mathcal{G}$ and eigenequation $\mathrm{Ad}_{R(g)}(X) = R(g)XR(g)^\hadj$ always has a nontrivial solution. Since $\{r_j\}$ is an orthonormal basis spanning $\complexes^n$, we can always postulate $X$ in a general form
\begin{equation}
	X = \sum_{i,j=1}^{n} x_{ij} \iprod{r_j}{\cdot}r_i, \quad x_{ij}\in\complexes.
\end{equation}
Substituting $X$ into the eigenequation with $R(g)$, $R(g)^\hadj$ expressed by their spectral decompositions we obtain
\begin{equation}
	X = \sum_{j,k=1}^{n} e^{2i(x_j - x_k)}x_{jk} \iprod{r_k}{\cdot}r_j ,
\end{equation}
which must be satisfied for all possible choices of $(x_j)$. This however is again guaranteed only if off-diagonal elements $x_{ij} = 0$, i.e.~$X$ has a diagonal form
\begin{equation}\label{eq:XmatrixDiagonal}
	X = \sum_{i=1}^{n} a_{i} \iprod{r_i}{\cdot}r_i ,
\end{equation}
for arbitrary coefficients $a_i \in\complexes$. This can be further rewritten with \eqref{eq:rj} as
\begin{equation}
	X = \sum_{i,j,k=1}^{n} a_i (F^{\mathrm{d}}_{j})_{ii} (F^{\mathrm{d}}_{k})_{ii} E_{jk},
\end{equation}
where from \eqref{eq:FiDiag} we have
\begin{equation}
	(F^{\mathrm{d}}_{k})_{ii} = \frac{1}{\sqrt{k(k+1)}}\left( \sum_{j=1}^{k}\delta_{ij} - k\delta_{k+1,k+1} \right).
\end{equation}
Now, with a bit of algebra, one confirms all matrix elements of $X$ are indeed of a form \eqref{eq:A33rs}, so one identifies $A_{33}=X$ and the proof is concluded.
\end{proof}

As a corollary of this section we formulate the following characterization of operator sum representation of a map with respect to the Frobenius basis, for both covariance and conjugate covariance:

\begin{theorem}\label{thm:PhiUconjcovariant}
A linear map $\phi : \matr{n}\to\matr{n}$ is:
\begin{enumerate}
	\item \emph{covariant w.r.t.~the diagonal unitaries} if and only if $\hat{c}_\phi$ admits a form \eqref{eq:Mform}, i.e.
	\begin{equation}\label{eq:MatrixCgenStructure}
		\hat{c}_\phi = \left(\begin{array}{ccc}C_1 & -C_2 & 0 \\ C_2 & C_1 & 0 \\ 0 & 0 & C_3\end{array}\right),
	\end{equation}
	for arbitrary matrices $C_1 , C_2\in \operatorname{Diag}_{n(n-1)/2}{(\complexes)}$ and $C_3 \in \matr{n}$;
	\item \emph{conjugate covariant w.r.t.~the diagonal unitaries} if and only if $\hat{c}_\phi$ admits the form elaborated in Lemma \ref{lemma:MformConjCov}, i.e.
	\begin{equation}\label{eq:MatrixCgenStructure2}
		\hat{c}_\phi = \left(\begin{array}{ccc}C_{11} & C_{12} & 0 \\ C_{21} & C_{22} & 0 \\ 0 & 0 & C_{33}\end{array}\right)
	\end{equation}
	for arbitrary matrices $C_{ij}\in \operatorname{Diag}_{n(n-1)/2}{(\complexes)}$, $1\leqslant i,j\leqslant 2$, and $C_{33} = [\kappa_{ij}]\in \matr{n}$ given by equations \eqref{eq:A33rs} for arbitrary sequence $(a_i)\in\complexes^n$.
\end{enumerate}
\end{theorem}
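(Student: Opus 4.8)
The plan is to observe that Theorem \ref{thm:PhiUconjcovariant} is essentially a bookkeeping corollary that assembles the work already done in this subsection. First I would invoke Proposition \ref{prop:CovUVcommAlpha}: a linear map $\phi$ is $(U,V)$-covariant precisely when $\hat{c}_\phi$ lies in the commutant $\hat{\alpha}_{U,V}(\mathcal{G})^\prime$, and the correspondence $\phi \leftrightarrow \hat{c}_\phi$ is a linear isomorphism (up to the fixed choice of Frobenius basis), so that $\mathcal{C}_{U,V} \cong \hat{\alpha}_{U,V}(\mathcal{G})^\prime$. Thus both parts of the theorem reduce to identifying this commutant in the two relevant cases.

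For part (1), the covariance case, I would specialize $V = U$ with $U$ given by \eqref{eq:UmaxComm}. By Proposition \ref{prop:AlphaStructure} the matrices $\hat{\alpha}_U(g)$ have the block form \eqref{eq:AlphaGDef} with $\Delta$ as in \eqref{eq:DeltaDef}, and the Lemma immediately preceding the theorem (the one establishing \eqref{eq:Mform}) tells us exactly which $M$ commute with all these: those of the block form \eqref{eq:Mform} with $A_1, A_2$ arbitrary diagonal of size $\binom{n}{2}$ and $A_3$ arbitrary. Translating $M = \hat{c}_\phi$ back through the isomorphism of Proposition \ref{prop:CovUVcommAlpha} gives precisely \eqref{eq:MatrixCgenStructure}. (One could also route through the intermediate step $\comm{\hat{c}_\phi}{\hat{\alpha}_U(g)} = 0 \iff \comm{\hat{c}_\phi}{A(g)} = 0$ furnished by Proposition \ref{prop:Ag} and the following Proposition, but it is not needed once the commutant Lemma is in hand.)

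For part (2), the conjugate covariance case, I would instead take $V = \overline{U}$, so that $\hat{\alpha}_{U,\overline{U}}(g) = \hat{\beta}_U(g)$ has the block structure displayed above \eqref{eq:ThetaRdef} with $\Theta$ and $R(g)$ as in \eqref{eq:ThetaRdef}. Then Lemma \ref{lemma:MformConjCov} identifies the commutant $\hat{\beta}_U(\mathcal{G})^\prime$ as exactly the matrices of the form \eqref{eq:Mform2}, with the off-diagonal $2\times 2$ blocks free among diagonal matrices and the lower-right $n\times n$ block constrained to the symmetric matrix with entries \eqref{eq:A33rs} parametrized by $(a_i) \in \complexes^n$. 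Applying the isomorphism of Proposition \ref{prop:CovUVcommAlpha} once more yields \eqref{eq:MatrixCgenStructure2}, completing the proof.

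There is no real obstacle here; the theorem is a restatement. The only points requiring care are purely notational: making sure the block sizes in \eqref{eq:AlphaGDef} and in $\hat{\beta}_U(g)$ line up with the ordering of the Frobenius basis into symmetric, antisymmetric and diagonal parts (so that the two $\binom{n}{2}$-blocks are the $F_i^{\mathrm{s}}$ and $F_i^{\mathrm{a}}$ sectors and the $n$-block is the $F_i^{\mathrm{d}}$ sector), and confirming that the identification $M = \hat{c}_\phi$ preserves these block decompositions. Since the bijection $k \mapsto (\mu(k), \nu(k))$ used throughout is the same one, this alignment is automatic, and the proof is just "combine Proposition \ref{prop:CovUVcommAlpha} with the two commutant Lemmas."

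\begin{proof}
By Proposition \ref{prop:CovUVcommAlpha}, the assignment $\phi \mapsto \hat{c}_\phi$ is a linear isomorphism identifying the space of $(U,V)$-covariant maps with the commutant $\hat{\alpha}_{U,V}(\mathcal{G})^\prime$; hence in each case it suffices to compute this commutant.

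For (1), covariance w.r.t.~the diagonal unitaries means $V = U$ with $U$ as in \eqref{eq:UmaxComm}. By Proposition \ref{prop:AlphaStructure} each $\hat{\alpha}_U(g)$ has the block form \eqref{eq:AlphaGDef}, and by the Lemma characterizing \eqref{eq:Mform}, a matrix $M \in \matr{n^2}$ commutes with all $\hat{\alpha}_U(g)$ if and only if $M$ has the form \eqref{eq:Mform} with $A_1, A_2 \in \operatorname{Diag}_{n(n-1)/2}(\complexes)$ arbitrary and $A_3 \in \matr{n}$ arbitrary. Setting $M = \hat{c}_\phi$ and renaming the blocks gives exactly \eqref{eq:MatrixCgenStructure}.

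For (2), conjugate covariance means $V = \overline{U}$, so $\hat{\alpha}_{U,\overline{U}}(g) = \hat{\beta}_U(g)$, whose block structure is displayed above. By Lemma \ref{lemma:MformConjCov}, $M$ commutes with every $\hat{\beta}_U(g)$ if and only if $M$ has the form \eqref{eq:Mform2} with $A_{ij} \in \operatorname{Diag}_{n(n-1)/2}(\complexes)$ arbitrary for $1 \leqslant i,j \leqslant 2$ and $A_{33} = [\kappa_{rs}] \in \matr{n}$ the symmetric matrix specified by \eqref{eq:A33rs} for an arbitrary sequence $(a_i) \in \complexes^n$. Putting $M = \hat{c}_\phi$ yields \eqref{eq:MatrixCgenStructure2} with $C_{33} = A_{33}$, as claimed.
\end{proof}
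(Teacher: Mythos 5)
Your proof is correct and follows exactly the route the paper intends: the theorem is stated as a corollary of the section, obtained by combining Proposition \ref{prop:CovUVcommAlpha} with the two commutant lemmas (the one yielding \eqref{eq:Mform} for $V=U$ and Lemma \ref{lemma:MformConjCov} for $V=\overline{U}$). Your bookkeeping of block sizes and the identification $M=\hat{c}_\phi$ matches the paper's setup, so nothing is missing.
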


\subsubsection{Decomposable maps}

Recall, that a linear map $\phi : \matr{n}\to\matr{n}$ is \emph{Hermiticity preserving} if and only if $\hat{c}_\phi$ is Hermitian and \emph{completely positive} if and only if $\hat{c}_\phi$ is positive semidefinite \cite{Szczygielski2023}.

\begin{proposition}\label{prop:MapHPCPcoCP}
Let a linear map $\phi : \matr{n}\to\matr{n}$ be covariant w.r.t.~the diagonal unitaries. Then:
\begin{enumerate}
	\item $\phi$ is \emph{Hermiticity preserving} if and only if $\hat{c}_\phi$ admits the form \eqref{eq:MatrixCgenStructure} with $C_1$ real, $C_2$ imaginary and $C_3$ Hermitian;
	\item $\phi\in\cpe{\matr{n}}$ if and only if $\hat{c}_\phi$ admits the form \eqref{eq:MatrixCgenStructure} with $C_1 \geqslant |C_2|$ and $C_3 \geqslant 0$;
	\item $\phi\in\cocpe{\matr{n}}$ if and only if $\hat{c}_{\transpose\circ\phi}$ admits the form \eqref{eq:MatrixCgenStructure2} with $(a_i)\in\reals_{+}^{n}$, $C_{11}, C_{22} \geqslant 0$, $C_{12} = C_{21}^{\hadj}$ and $|C_{12}| \geqslant \sqrt{C_{11}C_{22}}$.
\end{enumerate}
\end{proposition}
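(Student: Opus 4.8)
The plan is to treat each of the three parts by combining Theorem~\ref{thm:PhiUconjcovariant} (which fixes the block structure of $\hat{c}_\phi$) with the known dictionary between algebraic properties of $\phi$ and spectral properties of $\hat{c}_\phi$: $\phi$ is Hermiticity preserving iff $\hat{c}_\phi = \hat{c}_\phi^\hadj$, $\phi\in\cpe{\matr{n}}$ iff $\hat{c}_\phi \geqslant 0$, and (for the coCP case) $\phi\in\cocpe{\matr{n}}$ iff $\transpose\circ\phi\in\cpe{\matr{n}}$ iff $\hat{c}_{\transpose\circ\phi}\geqslant 0$. So each statement reduces to asking: given the block form imposed by covariance, when is such a block matrix Hermitian (resp.~positive semidefinite)?

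For part~(1), I would simply impose $\hat{c}_\phi = \hat{c}_\phi^\hadj$ on the matrix in \eqref{eq:MatrixCgenStructure}. Since $C_1, C_2$ are diagonal, the Hermiticity of the $2\times 2$ block $\left(\begin{smallmatrix} C_1 & -C_2 \\ C_2 & C_1\end{smallmatrix}\right)$ forces $C_1 = C_1^\hadj$ (so $C_1$ real, being diagonal), $C_3 = C_3^\hadj$, and $-C_2 = C_2^\hadj = \overline{C_2}$ entrywise, i.e.~$C_2$ purely imaginary. This is a one-line computation. For part~(2), Hermiticity is necessary, so $\hat{c}_\phi$ has the form from part~(1); positivity of the block-diagonal matrix is equivalent to positivity of each diagonal block, giving $C_3\geqslant 0$ together with positivity of $R := \left(\begin{smallmatrix} C_1 & -C_2 \\ C_2 & C_1\end{smallmatrix}\right)$. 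Here I would diagonalize the $2\times 2$ structure: conjugating $R$ by the unitary $\tfrac{1}{\sqrt 2}\left(\begin{smallmatrix} I & iI \\ iI & I\end{smallmatrix}\right)$ (or noting that, since all blocks are simultaneously diagonal, $R$ decomposes entrywise into $2\times 2$ blocks $\left(\begin{smallmatrix} (C_1)_{kk} & -(C_2)_{kk} \\ (C_2)_{kk} & (C_1)_{kk}\end{smallmatrix}\right)$), and then $R\geqslant 0$ iff each such $2\times 2$ Hermitian block is positive semidefinite, i.e.~$(C_1)_{kk}\geqslant 0$ and $(C_1)_{kk}^2 \geqslant |(C_2)_{kk}|^2$, which is exactly $C_1\geqslant |C_2|$ in the sense of the stated entrywise inequality of diagonal matrices.

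For part~(3), I would first observe that $\transpose\circ\phi$ is conjugate covariant precisely when $\phi$ is covariant (Theorem~\ref{thm:UcovUbarCov}(1), with $U=V$ diagonal, so that $\transpose\circ\phi$ is $\overline U$-covariant — indeed $\overline{U}$-covariance here means exactly $(U,\overline U)$-covariance as defined before \eqref{eq:PhiConjCov}), hence $\hat{c}_{\transpose\circ\phi}$ has the form \eqref{eq:MatrixCgenStructure2} from Theorem~\ref{thm:PhiUconjcovariant}(2). Then $\phi\in\cocpe{\matr{n}}$ iff $\hat{c}_{\transpose\circ\phi}\geqslant 0$. Positivity of the block-diagonal matrix splits into positivity of $C_{33}$ and of the $2\times 2$-block part $\left(\begin{smallmatrix} C_{11} & C_{12} \\ C_{21} & C_{22}\end{smallmatrix}\right)$. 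For the latter, as in part~(2) the simultaneous diagonality reduces it entrywise to $2\times 2$ Hermitian matrices $\left(\begin{smallmatrix} (C_{11})_{kk} & (C_{12})_{kk} \\ (C_{21})_{kk} & (C_{22})_{kk}\end{smallmatrix}\right)$, whose positivity requires $C_{21}=C_{12}^\hadj$, $C_{11},C_{22}\geqslant 0$, and the determinant condition $(C_{11})_{kk}(C_{22})_{kk}\geqslant |(C_{12})_{kk}|^2$, i.e.~$\sqrt{C_{11}C_{22}}\leqslant|C_{12}|$ in the entrywise sense. It remains to analyze $C_{33}\geqslant 0$: here $C_{33}=[\kappa_{rs}]$ is not free but is the image, under the linear parametrization \eqref{eq:A33rs}, of a sequence $(a_i)\in\complexes^n$. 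The main obstacle — and the only genuinely nontrivial point — is to show that $C_{33}\geqslant 0$ is equivalent to $(a_i)\in\reals_+^n$. The cleanest route is to recognize, from the derivation in Lemma~\ref{lemma:MformConjCov}, that $C_{33}$ is (the matrix of) the operator $X = \sum_i a_i \iprod{r_i}{\cdot}r_i$ with $\{r_i\}$ the orthonormal eigenbasis \eqref{eq:rj}; thus $\{a_i\}$ is literally the spectrum of $X=C_{33}$, and $C_{33}\geqslant 0$ iff every eigenvalue $a_i$ is a nonnegative real, i.e.~$(a_i)\in\reals_+^n$. I would phrase the argument this way rather than trying to invert the messy formulas \eqref{eq:A33rs} directly. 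Assembling the three blocks then gives precisely the stated conditions, completing the proof.
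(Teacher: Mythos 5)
Your overall strategy is exactly the paper's: reduce each item to Hermiticity or positive semidefiniteness of $\hat{c}_\phi$ (resp.\ $\hat{c}_{\transpose\circ\phi}$), impose the block structure from Theorem~\ref{thm:PhiUconjcovariant}, and read off conditions block by block. Parts (1) and (2) are fine; the only cosmetic difference is that you test positivity of $\left(\begin{smallmatrix} C_1 & -C_2 \\ C_2 & C_1\end{smallmatrix}\right)$ by splitting it (after a permutation, or after conjugating by $\tfrac{1}{\sqrt 2}\left(\begin{smallmatrix} I & iI \\ iI & I\end{smallmatrix}\right)$) into $2\times 2$ Hermitian blocks and using the trace/determinant criterion, whereas the paper computes the eigenvalues $s_k \pm i r_k$ from the characteristic polynomial; the two computations are equivalent and yield the same condition $C_1 \geqslant |C_2|$. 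Your identification of $(a_i)$ with the spectrum of $C_{33}$ via the orthonormal eigenbasis $\{r_i\}$ of \eqref{eq:rj} is also the paper's route (it cites \eqref{eq:XmatrixDiagonal} for exactly this purpose).

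There is, however, a genuine problem in your part (3), localized in a single ``i.e.'': you correctly derive that positive semidefiniteness of the $2\times 2$ blocks $\left(\begin{smallmatrix} (C_{11})_{kk} & (C_{12})_{kk} \\ (C_{21})_{kk} & (C_{22})_{kk}\end{smallmatrix}\right)$ requires the determinant condition $(C_{11})_{kk}(C_{22})_{kk}\geqslant |(C_{12})_{kk}|^2$, but you then restate this as $\sqrt{C_{11}C_{22}}\leqslant|C_{12}|$, which is the \emph{reverse} inequality. The determinant condition you wrote is the correct characterization of positivity and it reads $|C_{12}|\leqslant\sqrt{C_{11}C_{22}}$; the version you pass to (and which matches the inequality printed in the proposition and in the paper's own proof, where the smaller root $\tfrac12\bigl(s_k+r_k-\sqrt{(s_k-r_k)^2+4|w_k|^2}\bigr)$ is nonnegative precisely when $|w_k|\leqslant\sqrt{s_k r_k}$, not $\geqslant$) cannot follow from it. So your argument, carried out honestly, proves the statement with the inequality in part (3) reversed; as written, the final step of your proof contradicts the step immediately before it. You should either correct the direction of the inequality in the conclusion or flag the discrepancy with the stated proposition explicitly, rather than bridging it with an ``i.e.''.
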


\begin{proof}
By Theorem \ref{thm:PhiUconjcovariant}, $\hat{c}_\phi$ is of a form \eqref{eq:MatrixCgenStructure} with diagonal $C_1$, $C_2$. Hence, $\hat{c}_\phi$ is Hermitian if and only if $C_1$, $C_3$ are Hermitian and $C_2$ is anti-Hermitian, i.e.~$C_1$ is real and $C_2$ is purely imaginary. For positive semidefiniteness, consider a characteristic polynomial
\begin{align}
	w(\lambda) &= \det{\left(\begin{array}{cc}C_1 - \lambda I & -C_2 \\ C_2 & C_1 - \lambda I\end{array}\right)} = \det{\left[(C_1 - \lambda I)^2 + C_{2}^{2}\right]} \\
	&= \det{\left(C_{1}^{2} - 2\lambda C_1 + \lambda^2 I + C_{2}^{2}\right)} = \det{\operatorname{diag}{\{s_{k}^{2} + a_{k}^{2} - 2\lambda s_k + \lambda^2\}}} \nonumber\\
	&= \prod_{k=1}^{n(n-1)/2} (s_{k}^{2} + a_{k}^{2} - 2\lambda s_k + \lambda^2),\nonumber
\end{align}
where we denoted $s_k \in \spec{C_1}$, $r_k \in \spec{C_2}$. Solving $w(\lambda) = 0$ we see
\begin{equation}
	\lambda_k = s_k \pm i r_k .
\end{equation}
where $s_k \geqslant 0$ and hence
\begin{equation}
	\spec{\hat{c}_\phi} = \spec{C_3} \cup \{s_k \pm ir_k : s_k \geqslant 0, r_k \in \spec{C_2}\}.
\end{equation}
Then we see $\lambda_k \geqslant 0$ is satisfied if and only if $r_k$ is imaginary, $r_k = i t_k$ for $t_k \in \reals$ such that $s_k \pm ir_k = s_k \pm t_k \geqslant 0$, or if $s_k \geqslant |r_k|$; hence $C_1 - |C_2|$ must be positive semidefinite, as stated. Finally, $\phi\in\cocpe{\matr{n}}$ is covariant if and only if $\phi = \transpose\circ\psi$ for $\psi\in\cpe{\matr{n}}$ conjugate covariant by Theorem \ref{thm:UcovUbarCov}. Theorem \ref{thm:PhiUconjcovariant} then shows $\hat{c}_\psi\geqslant 0$ must be positive semidefinite and of a form \eqref{eq:MatrixCgenStructure2}; this immediately yields $C_{11}, C_{22} \geqslant 0$, $C_{12} = C_{21}^{\hadj}$ and $C_{33} \geqslant 0$; via \eqref{eq:XmatrixDiagonal} then $a_i \geqslant 0$. For the remaining blocks, compute
\begin{align}
	w(\lambda) &= \det{\left(\begin{array}{cc}C_{11} - \lambda I & C_{21}^{\hadj} \\ C_{21} & C_{22} - \lambda I\end{array}\right)} \\
	&= \det{\left[(C_{11} - \lambda I)(C_{22}-\lambda I) - C_{21}^{\hadj}C_{21}\right]} \nonumber \\
	&= \det{\operatorname{diag}{\{(s_k-\lambda)(r_k-\lambda)-|w_k|^2\}}} \nonumber\\
	&= \prod_{k=1}^{n(n-1)/2}\left( s_k r_k - |w_k|^2 - (s_k + r_k)\lambda + \lambda^2 \right) \nonumber
\end{align}
for $s_k \in \spec{C_{11}}$, $r_k \in \spec{C_{22}}$ and $w_k \in \spec{C_{21}}$. Solving $w(\lambda) = 0$ we obtain
\begin{equation}
	\lambda_k = \frac{1}{2}\left( s_k + r_k \pm \sqrt{(s_k -r_k)^2 + 4|w_k|^2} \right).
\end{equation}
Since $r_k, s_k \geqslant 0$ we have $\lambda_k \geqslant 0$ if and only if $|w_k| \geqslant \sqrt{s_k r_k}$, or $|C_{12}|^2 \geqslant C_{11} C_{22}$.
\end{proof}

We conclude this section with the following corollary on decomposable covariant maps, which is an immediate consequence of Theorems \ref{thm:MainResultDec} and \ref{thm:UcovUbarCov}:

\begin{theorem}\label{thm:MaxCovDec}
A linear map $\varphi \in \dece{\matr{n}}$ is covariant w.r.t.~the maximal commutative subgroup of $\mathrm{U}(n)$ if and only if
\begin{equation}
	\varphi = \phi + \transpose\circ\psi
\end{equation}
for some $\phi\in\cpe{\matr{n}}$ covariant and $\psi\in\cocpe{\matr{n}}$ conjugate covariant, both characterized in Proposition \ref{prop:MapHPCPcoCP}.
\end{theorem}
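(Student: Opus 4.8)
The plan is to combine the abstract decomposition theory of Section \ref{sec:CovariantDecMaps} with the finite-dimensional characterizations just established, so that essentially no new computation is needed. First I would invoke Theorem \ref{thm:MainResultDec}: a decomposable map $\varphi \in \dece{\matr{n}}$ which is $U$-covariant (with $U$ the maximal commutative subgroup of $\mathrm{U}(n)$, i.e.\ the diagonal unitaries \eqref{eq:UmaxComm}) decomposes as $\varphi = \phi_1 + \phi_2$ with $\phi_1 \in \cpe{\matr{n}}$ and $\phi_2 \in \cocpe{\matr{n}}$, \emph{both} $U$-covariant. This is the crucial point that upgrades an arbitrary decomposition into a covariance-respecting one, and it is exactly what Theorem \ref{thm:MainResultDec} delivers.

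Next I would apply Theorem \ref{thm:UcovUbarCov}(3) to the coCP summand: since $\phi_2$ is coCP and $U$-covariant, there is a CP map $\psi$ with $\phi_2 = \transpose \circ \psi$ and $\psi$ necessarily $\overline{U}$-covariant. In the present setting $\overline{U}(g) = \sum_j e^{-ix_j}E_{jj}$, and being $\overline{U}$-covariant in the sense of $\psi\circ\mathrm{Ad}_{U(g)} = \mathrm{Ad}_{\overline{U}(g)}\circ\psi$ is precisely the notion of \emph{conjugate covariance} introduced in \eqref{eq:PhiConjCov}. Setting $\phi := \phi_1$, this produces the asserted form $\varphi = \phi + \transpose\circ\psi$ with $\phi\in\cpe{\matr{n}}$ covariant and $\psi\in\cocpe{\matr{n}}$... wait, here I should be careful: $\psi$ is a CP map that is conjugate covariant, while the statement phrases the second summand as "$\psi\in\cocpe{\matr{n}}$ conjugate covariant." I would reconcile this by noting that the passage $\psi \mapsto \transpose\circ\psi$ is an involution interchanging $\cpe{\matr{n}}$ and $\cocpe{\matr{n}}$, so one may equally well absorb the transposition and speak of the coCP map $\transpose\circ\psi$ directly; I would phrase the final identity in whichever of the two equivalent ways matches the statement, and cite Theorem \ref{thm:UcovUbarCov} for the equivalence. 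The converse direction is immediate: if $\varphi = \phi + \transpose\circ\psi$ with $\phi$ CP covariant and $\psi$ (co)CP with the stated covariance, then $\phi$ is $U$-covariant by hypothesis, $\transpose\circ\psi$ is $U$-covariant again by Theorem \ref{thm:UcovUbarCov}, and a sum of $U$-covariant maps is $U$-covariant, so $\varphi$ is a $U$-covariant decomposable map.

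Finally, for the clause "both characterized in Proposition \ref{prop:MapHPCPcoCP}" I would simply observe that Proposition \ref{prop:MapHPCPcoCP}(2) characterizes the covariant CP maps $\phi$ via the block structure \eqref{eq:MatrixCgenStructure} with $C_1 \geqslant |C_2|$, $C_3 \geqslant 0$, and Proposition \ref{prop:MapHPCPcoCP}(3) characterizes the covariant coCP maps via $\hat{c}_{\transpose\circ\psi}$ having the form \eqref{eq:MatrixCgenStructure2} with the positivity constraints listed there; hence every summand in the decomposition is fully described by the operator-sum data of Theorem \ref{thm:PhiUconjcovariant} together with those positivity conditions. I do not anticipate a genuine obstacle here — the content is entirely a matter of assembling Theorems \ref{thm:MainResultDec}, \ref{thm:UcovUbarCov} and Proposition \ref{prop:MapHPCPcoCP} — the only point requiring a moment's care is the bookkeeping of which map carries the transposition (the CP map $\psi$ versus the coCP map $\transpose\circ\psi$) so that the final statement reads consistently with the definition of conjugate covariance.
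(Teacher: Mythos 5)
Your proposal is correct and follows essentially the same route as the paper, which presents Theorem \ref{thm:MaxCovDec} as an immediate consequence of Theorem \ref{thm:MainResultDec} (covariant decomposition into a $U$-covariant CP part and a $U$-covariant coCP part) and Theorem \ref{thm:UcovUbarCov} (rewriting the coCP part as $\transpose\circ\psi$ with $\psi$ conjugate covariant), combined with the finite-dimensional characterization of Proposition \ref{prop:MapHPCPcoCP}. The bookkeeping point you flag — that the $\psi$ appearing in $\transpose\circ\psi$ is naturally a \emph{CP} conjugate-covariant map rather than a coCP one — is a genuine (minor) notational slip in the statement, and your resolution of it is the right one.
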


\section{Decomposable quantum dynamics}
\label{sec:Ddiv}

In this section we highlight some applications of general theory of decomposable covariant maps, as elaborated earlier, in the description of quantum dynamics.

Let $B_1 (H)$ be the Banach space of trace class operators on Hilbert space $H$. A positive semidefinite operator $\rho\in B_1 (H)$ of trace $1$ will be called a \emph{density operator} (or \emph{density matrix} when $H\simeq \complexes^n$). Density operators are traditionally used to express (mixed) states of physical systems in quantum mechanics. Their time dependence is then characterized in terms of a \emph{quantum evolution family}, or \emph{quantum dynamical map}, i.e.~a time-parametrized family $(\Lambda_t)_{t\geqslant 0}$ of positive, trace preserving linear maps on $B_1 (H)$ such that $\rho_t = \Lambda_t (\rho_0)$ for some initial $\rho_0$. The most common approach further requires $\Lambda_t$ to be not only positive, but rather CP for all $t\geqslant 0$ \cite{Alicki2006,Breuer2002,Rivas2012}.

An evolution is called \emph{divisible} if and only if there exists a map $V_{t,s}$ on $B_1 (H)$ such that
\begin{equation}
	\Lambda_t = V_{t,s}\circ\Lambda_s \quad \text{for all } t\geqslant 0,\,s\in [0,t];
\end{equation}
$V_{t,s}$ is then called a \emph{propagator}. Further, evolution is \emph{P-divisible} when $V_{t,s}$ is positive and \emph{CP-divisible}, or \emph{Markovian}, when $V_{t,s}$ is CP, for all $t$, $s$. Recently, a new class of the so-called \emph{D-divisible} evolution families on $\matr{n}$, generalizing Markovian ones, was proposed in \cite{Szczygielski2023}. The defining property is that $(\Lambda_t)_{t\geqslant 0}$ is D-divisible (or decomposably divisible) if it is divisible and its propagator $V_{t,s}$ is \emph{decomposable} for all $t,s$. In particular it was shown that D-visibility of trace preserving families satisfying a time-local Master Equation is totally characterized in terms of an associated \emph{generator}. In subsection \ref{sec:DdivisibleFamilies} below we elaborate more on a D-divisible case and provide necessary and sufficient conditions for covariance of such families.

\subsection{Some general remarks}

Recall that $B(H)$ is (isometrically isomorphic to) the \emph{dual} of $B_1 (H)$, with the duality pairing given by the trace, i.e.~for any bounded linear functional $f \in B_{1}(H)^*$ there exists a unique $A \in B(H)$ such that $f(T) = \tr{AT}$ for each $T\in B_1 (H)$. Since $\mathscr{A}\simeq B(H_0)$ for some Hilbert space $H_0$, the predual $\mathscr{A}_*$ is isomorphic to $B_1 (H_0)$. Therefore, for any linear map $\Phi : B_1 (H) \to \mathscr{A}_* \simeq B_1 (H_0)$, its \emph{dual} map $\Phi^\adj : \mathscr{A}\simeq B(H_0) \to B(H)$ can be, with a slight abuse of terminology, defined by property $\tr{A \Phi(T)} = \tr{\Phi^\adj (A)T}$ for $T\in B_1 (H)$ and $A\in B(H_0)\simeq\mathscr{A}$ being (a representation of) a linear functional on $\mathscr{A}_*$.

Now, let $\phi : \mathscr{A}\to B(H)$ be $U$-covariant, so that
\begin{equation}
	\phi \circ g = \mathrm{Ad}_{U(g)} \circ \phi
\end{equation}
for $U : \mathcal{G}\to B(H)$ and all $g\in\mathcal{G}$, as in previous sections. Assume that a map $\phi_* : B_1 (H) \to \mathscr{A}_*$ is the predual of $\phi$. One easily shows

\begin{proposition}\label{prop:CovMapDual}
A linear map $\phi : \mathscr{A}\to B(H)$ is $U$-covariant if and only if its predual $\phi_*$ is covariant in the sense that
\begin{equation}
	\phi_*\circ\mathrm{Ad}_{U(g)}^{-1} = g_* \circ \phi_*
\end{equation}
for all $g\in\mathcal{G}$, where $g_* : \mathscr{A}_* \to \mathscr{A}_*$ is the predual of $g$.
\end{proposition}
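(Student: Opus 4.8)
The plan is to phrase everything through the trace duality pairing $\langle A, T\rangle = \tr{AT}$ that identifies $B(H)$ with $B_1(H)^*$ and $\mathscr{A}\simeq B(H_0)$ with $(\mathscr{A}_*)^*$, and then to exploit its non-degeneracy. Two elementary observations are needed. First, since $g$ is a $*$-automorphism of the von Neumann algebra $\mathscr{A}$ it is normal, so the predual $g_* : \mathscr{A}_* \to \mathscr{A}_*$ exists and is determined by $\tr{g(A)\sigma} = \tr{A\,g_*(\sigma)}$ for all $A\in\mathscr{A}$, $\sigma\in\mathscr{A}_*$; likewise $\phi_*$ is, by hypothesis, the predual of $\phi$, hence determined by $\tr{\phi(A)T} = \tr{A\,\phi_*(T)}$ for $A\in\mathscr{A}$, $T\in B_1(H)$. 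Second, since $U(g)$ is unitary, cyclicity of the trace gives $\tr{\mathrm{Ad}_{U(g)}(X)T} = \tr{U(g)XU(g)^{-1}T} = \tr{X\,U(g)^{-1}TU(g)}$ for all $X\in B(H)$, $T\in B_1(H)$, so the predual of $\mathrm{Ad}_{U(g)}$ on $B_1(H)$ is precisely $\mathrm{Ad}_{U(g)}^{-1}$; this is exactly the map appearing in the asserted identity, which is why it reads as it does.

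With these in hand I would evaluate both sides of the candidate equality against an arbitrary $A\in\mathscr{A}\simeq B(H_0)$ and $T\in B_1(H)$. Using the defining relations above,
\[
\tr{A\,(g_*\circ\phi_*)(T)} = \tr{g(A)\,\phi_*(T)} = \tr{\phi(g(A))\,T},
\]
while
\[
\tr{A\,(\phi_*\circ\mathrm{Ad}_{U(g)}^{-1})(T)} = \tr{\phi(A)\,\mathrm{Ad}_{U(g)}^{-1}(T)} = \tr{\mathrm{Ad}_{U(g)}(\phi(A))\,T} = \tr{U(g)\phi(A)U(g)^{-1}\,T}.
\]
Hence $g_*\circ\phi_* = \phi_*\circ\mathrm{Ad}_{U(g)}^{-1}$ holds if and only if $\tr{\bigl(\phi(g(A)) - U(g)\phi(A)U(g)^{-1}\bigr)T} = 0$ for every $T\in B_1(H)$, which by non-degeneracy of the trace pairing is equivalent to $\phi(g(A)) = U(g)\phi(A)U(g)^{-1}$. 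Letting $A$ range over $\mathscr{A}$ and $g$ over $\mathcal{G}$, this is exactly the statement that $\phi$ is $U$-covariant; since every step in the chain is an equivalence, both implications follow at once.

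I expect no genuine obstacle: the argument is a bookkeeping exercise with adjoints. The only two points that deserve care are (i) that $\mathrm{Ad}_{U(g)}$ preduals to $\mathrm{Ad}_{U(g)}^{-1}$ rather than to itself — this is precisely where unitarity of $U(g)$ and cyclicity of the trace are used — and (ii) the invocation of non-degeneracy of the trace pairing (equivalently, that $\phi$ and $g$ are the Banach-space duals of $\phi_*$ and $g_*$, which is legitimate because $\phi$ is assumed to admit a predual and $g$ is a normal $*$-automorphism) in order to pass from equality of all pairings back to equality of the maps themselves. As an alternative presentation one may simply apply the contravariant predual functor to the covariance identity $\phi\circ g = \mathrm{Ad}_{U(g)}\circ\phi$, use $(S\circ T)_* = T_*\circ S_*$ together with $(\mathrm{Ad}_{U(g)})_* = \mathrm{Ad}_{U(g)}^{-1}$, and read off $g_*\circ\phi_* = \phi_*\circ\mathrm{Ad}_{U(g)}^{-1}$ directly; the converse is the same computation run with the dual functor.
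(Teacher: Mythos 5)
Your argument is correct and is precisely the standard duality computation the paper has in mind (the paper omits the proof entirely, prefacing the proposition with ``One easily shows''): identifying the predual of $\mathrm{Ad}_{U(g)}$ with $\mathrm{Ad}_{U(g)}^{-1}$ via unitarity and cyclicity of the trace, pairing both sides against arbitrary $A\in\mathscr{A}$ and $T\in B_1(H)$, and invoking non-degeneracy of the pairing. Both directions follow at once since every step is an equivalence, so nothing further is needed.
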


Let $(\Lambda_t)_{t\geqslant 0}$ be norm-continuous quantum evolution family as introduced at the beginning of this section and assume it is subject to the Cauchy problem
\begin{equation}\label{eq:LambdaODE}
	\frac{d}{dt}\Lambda_t = L_t \circ \Lambda_t, \quad \Lambda_0 = \id{},
\end{equation}
i.e.~the \emph{Master Equation}, for a bounded generator $L_t$ such that $\tr{L_t (\rho)} = 0$ for all $\rho\in B_1 (H)$; the derivative is to be understood in norm topology sense. Its dual $\Phi_t : B(H) \to B(H)$ can be then quickly checked to satisfy a dual Cauchy problem
\begin{equation}\label{eq:PhiODE}
	\frac{d}{dt}\Phi_t = \Phi_t \circ \mathcal{L}_t, \quad \Phi_0 = \id{},
\end{equation}
for $\mathcal{L}_t$ a map dual to $L_t$. Family $(\Phi_t)_{t\geqslant 0}$ is sometimes referred to as the \emph{Heisenberg picture} of the evolution, while $(\Lambda_t)_{t\geqslant 0}$ is the \emph{Schr\"odinger picture} \cite{Alicki2006}. Since the general theory earlier was established for maps on C*-algebras, we will stay with the dual picture and consider rather map $\Phi_t$ than $\Lambda_t$.

Let us take $\mathscr{A}=B(H)$. The requirement for $\Phi_t$ to satisfy \eqref{eq:PhiODE} has an immediate consequence on a particular form of covariance:

\begin{proposition}
Let $(\Phi_t)_{t\geqslant 0}$, $\Phi_t : B(H)\to B(H)$, be subject to the ODE \eqref{eq:PhiODE}. If $\Phi_t$ is $U$-covariant then necessarily $g(A) = \mathrm{Ad}_{U(g)}(A)$ for all $A\in B(H)$.
\end{proposition}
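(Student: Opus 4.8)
The plan is to differentiate the covariance relation and exploit the Master Equation at $t=0$. Since $\Phi_t$ is $U$-covariant for every $t\geqslant 0$, we have $\Phi_t\circ g = \mathrm{Ad}_{U(g)}\circ\Phi_t$ for all $g\in\mathcal{G}$, where $g$ is the (fixed, $t$-independent) $*$-automorphism of $B(H)$ implementing the group action. The idea is to regard both sides as norm-differentiable curves in $B(B(H))$ and differentiate at $t=0$.

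First I would note that $\Phi_0 = \id{}$, so evaluating the covariance relation at $t=0$ gives the tautology $g = g$. Next, differentiating $\Phi_t\circ g = \mathrm{Ad}_{U(g)}\circ\Phi_t$ with respect to $t$ at $t=0$ and using the ODE \eqref{eq:PhiODE}, namely $\frac{d}{dt}\Phi_t\big|_{t=0} = \Phi_0\circ\mathcal{L}_0 = \mathcal{L}_0$, yields
\begin{equation}
	\mathcal{L}_0 \circ g = \mathrm{Ad}_{U(g)}\circ \mathcal{L}_0,
\end{equation}
i.e.~$\mathcal{L}_0$ is itself $U$-covariant. This alone is not yet the claim, so the plan is to iterate: one shows more generally that each $\Phi_t$ is $U$-covariant forces, via the same differentiation trick applied at arbitrary $t$ together with the cocycle/semigroup-type structure of the solution, that the generator family $\mathcal{L}_t$ is $U$-covariant for all $t$, and in particular that $g$ commutes with the flow in the appropriate sense. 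The key point is that the map $g$ appears on the right of $\Phi_t$ and is conjugation-type on the left, so consistency across all $t$ pins down $g$.

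The decisive step is to observe that $\Phi_t$ must be injective (or at least has dense range / is an order-isomorphism onto its image for small $t$, being a perturbation of the identity): for $t$ sufficiently small, $\Phi_t = \id{} + t\mathcal{L}_0 + o(t)$ is invertible in $B(B(H))$. From $\Phi_t\circ g = \mathrm{Ad}_{U(g)}\circ\Phi_t$ we then get $g = \Phi_t^{-1}\circ\mathrm{Ad}_{U(g)}\circ\Phi_t$, exhibiting $g$ as conjugate to $\mathrm{Ad}_{U(g)}$. Since $g$ is a $*$-automorphism of $B(H)$, it is itself inner, $g = \mathrm{Ad}_{V(g)}$ for some unitary $V(g)$; substituting and letting $t\to 0$, the conjugating automorphism $\Phi_t^{-1}\circ(\cdot)\circ\Phi_t$ tends to the identity on $B(B(H))$, so $g = \lim_{t\to 0}\Phi_t^{-1}\circ\mathrm{Ad}_{U(g)}\circ\Phi_t = \mathrm{Ad}_{U(g)}$. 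Hence $g(A) = U(g)AU(g)^{-1} = \mathrm{Ad}_{U(g)}(A)$ for all $A\in B(H)$, which is the assertion.

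I expect the main obstacle to be making the limiting argument rigorous: one must justify that $\Phi_t^{-1}$ exists for small $t$ and depends norm-continuously on $t$ with $\Phi_0^{-1} = \id{}$, and that conjugation by $\Phi_t$ converges to the identity transformation in the relevant (e.g.~pointwise-norm, or BW) topology on $B(B(H))$, uniformly enough in $g$ if one wants the conclusion for all $g$ simultaneously. An alternative, cleaner route that avoids invertibility is purely infinitesimal: differentiate $n$ times at $t=0$ to obtain that $g$ commutes with every power $\mathcal{L}_0^n$ in the sense $\mathcal{L}_0^n\circ g = \mathrm{Ad}_{U(g)}\circ\mathcal{L}_0^n$ is false in general, so instead one differentiates the relation $\Phi_t\circ g = \mathrm{Ad}_{U(g)}\circ\Phi_t$ and the relation $\frac{d}{dt}\Phi_t = \Phi_t\circ\mathcal{L}_t$ jointly; comparing the two expressions for $\frac{d}{dt}(\Phi_t\circ g)$ gives $\Phi_t\circ\mathcal{L}_t\circ g = \mathrm{Ad}_{U(g)}\circ\Phi_t\circ\mathcal{L}_t = \Phi_t\circ g\circ\mathcal{L}_t$ (the last step using covariance of $\Phi_t$ backwards), whence, cancelling the injective $\Phi_t$, $\mathcal{L}_t\circ g = g\circ\mathcal{L}_t$; together with $\mathcal{L}_0\circ g = \mathrm{Ad}_{U(g)}\circ\mathcal{L}_0$ and $\mathcal{L}_0 = \mathcal{L}_0\circ g\circ g^{-1}$ one isolates $\mathrm{Ad}_{U(g)} = g$ on the (dense) range of $\mathcal{L}_0$, then extends by continuity. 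Either way, cancelling $\Phi_t$ — i.e.~its injectivity — is the crux.
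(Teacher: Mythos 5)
Your central argument --- that $\Phi_t$ is invertible for $t$ small (being a norm-small perturbation of $\id{}$, since the generator is bounded), so that $g=\Phi_t^{-1}\circ\mathrm{Ad}_{U(g)}\circ\Phi_t\to\mathrm{Ad}_{U(g)}$ in norm as $t\to 0^+$ --- is correct, and it is genuinely different from the paper's route. The paper never inverts $\Phi_t$: it first shows that $\Phi_t\circ\mathcal{L}_t$ inherits $U$-covariance by differentiating the covariance relation, rewrites the ODE as the Volterra equation $\Phi_t=\id{}+\int_0^t\Phi_s\circ L_s^\adj\,ds$, composes with $g$ in two ways (once pulling $\mathrm{Ad}_{U(g)}$ through the integral, once using covariance of $\Phi_t$ directly), and cancels the common integral term to leave $g=\mathrm{Ad}_{U(g)}$. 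The paper's cancellation is purely algebraic and needs no invertibility or limiting argument; yours buys the same conclusion at the cost of justifying $\Phi_t^{-1}\to\id{}$, which is harmless here but is an extra analytic hypothesis in disguise.

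Two points deserve correction. First, your claim that evaluating the covariance relation at $t=0$ ``gives the tautology $g=g$'' is wrong: with $\Phi_0=\id{}$ the relation $\Phi_0\circ g=\mathrm{Ad}_{U(g)}\circ\Phi_0$ reads $g=\mathrm{Ad}_{U(g)}$, which is already the assertion. Your limit $t\to 0^+$ is just a continuity-based reconstruction of this evaluation, so the detour through invertibility is unnecessary once this is noticed. Second, the ``alternative infinitesimal route'' at the end has a genuine gap: from $\mathcal{L}_0\circ g=g\circ\mathcal{L}_0$ and $\mathcal{L}_0\circ g=\mathrm{Ad}_{U(g)}\circ\mathcal{L}_0$ you only conclude $g=\mathrm{Ad}_{U(g)}$ on $\im{\mathcal{L}_0}$, and this range is in general neither dense nor even nonzero (the generator may vanish at $t=0$; moreover $\mathcal{L}_0(I)=0$ always). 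So that variant does not close, and the conjugation-plus-limit argument, or better the direct evaluation at $t=0$, should be regarded as the actual proof.
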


\begin{proof}
Note that, by $U$-covariance property and boundedness of $\mathrm{Ad}_{U(g)}$,
\begin{align}\label{eq:PhiLCov}
	\Phi_t\circ \mathcal{L}_t\circ g &= \lim_{\epsilon \to 0^+} \frac{1}{\epsilon}\left( \Phi_{t+\epsilon}\circ g - \Phi_t \circ g\right) \\
	&= \mathrm{Ad}_{U(g)}\circ\lim_{\epsilon\to 0^+} \frac{1}{\epsilon} \left( \Phi_{t+\epsilon} - \Phi_t \right) \nonumber \\
	&= \mathrm{Ad}_{U(g)}\circ\Phi_t\circ \mathcal{L}_t,\nonumber
\end{align}
so $\Phi_t\circ \mathcal{L}_t$ is also $U$-covariant. Integrating both sides of ODE \eqref{eq:PhiODE} we obtain a linear Volterra equation
\begin{equation}
	\Phi_t = \id{} + \int_{0}^{t} \Phi_s \circ L_{s}^{\adj} \, ds .
\end{equation}
After composing it with $g$ and applying \eqref{eq:PhiLCov}
\begin{equation}\label{eq:PhiG1}
	\Phi_t \circ g = g + \mathrm{Ad}_{U(g)}\circ\int_{0}^{t} \Phi_s \circ L_{s}^{\adj} \, ds .
\end{equation}
On the other hand, by covariance we have
\begin{equation}\label{eq:PhiG2}
	\Phi_t \circ g = \mathrm{Ad}_{U(g)}\circ \Phi_t = \mathrm{Ad}_{U(g)}\circ \left( \id{} + \int_{0}^{t} \Phi_s \circ L_{s}^{\adj}\, ds \right).
\end{equation}
Equating \eqref{eq:PhiG1} and \eqref{eq:PhiG2} we see it must hold $g = \mathrm{Ad}_{U(g)}$.
\end{proof}

In consequence the only well-posed $U$-covariance condition for family $(\Phi_t)_{t\geqslant 0}$ must be of a form
\begin{equation}\label{eq:TheUcovariance}
	\Phi_t \circ \mathrm{Ad}_{U(g)} = \mathrm{Ad}_{U(g)}\circ\Phi_t ,
\end{equation}
satisfied for all $t\geqslant 0$, $g\in\mathcal{G}$; we will be only considering such covariance from now on until the end of this section. Since $g_* = \mathrm{Ad}^{-1}_{U(g)}$ by direct check, Proposition \ref{prop:CovMapDual} also shows that $\Phi_t$ is $U$-covariant if and only if its predual $\Lambda_t$ is $U^{-1}$-covariant, so all of the results can be easily translated to the dynamical maps itself by switching the representation $U$ to its inverse.

Recall that $(\Lambda_t)_{t\geqslant 0}$ is divisible if and only if $\Lambda_t = V_{t,s}\circ\Lambda_s$ for trace preserving propagator $V_{t,s} : B_1 (H) \to B_1 (H)$. The dual family $(\Phi_t)_{t\geqslant 0}$ is then also \emph{divisible}, i.e.
\begin{equation}\label{eq:DualDivisible}
	\Phi_t = \Phi_s \circ \mathcal{V}_{t,s}, \quad 0\leqslant s \leqslant t
\end{equation}
for $\mathcal{V}_{t,s}$ dual to $V_{t,s}$. Likewise, all forms of X-divisibility mentioned so far (for X = P, CP or D) apply to $(\Phi_t)_{t\geqslant 0}$ as well, i.e.~the dual family is, say, D-divisible if and only if $\mathcal{V}_{t,s}$ is decomposable, which happens if and only if $(\Lambda_t)_{t\geqslant 0}$ is itself D-divisible and so on.

\begin{proposition}\label{prop:Vcov}
Let $(\Phi_t)_{t\geqslant 0}$ be divisible. Then the following statements hold:
\begin{enumerate}
	\item $(\Phi_{t})_{t\geqslant 0}$ is $U$-covariant if and only if $\im{\comm{\mathcal{V}_{t,s}}{\mathrm{Ad}_{U(g)}}} \subseteq \ker{\Phi_s}$ for all $t\geqslant 0$, $s\leqslant t$;
	\item If $(\Phi_t)_{t\geqslant 0}$ is invertible then it is $U$-covariant if and only if $(\mathcal{V}_{t,s})_{s\leqslant t}$ is $U$-covariant.
\end{enumerate}
\end{proposition}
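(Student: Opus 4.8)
The plan is to reduce both claims to the single divisibility identity $\Phi_s \circ \mathcal{V}_{t,s} = \Phi_t$ together with the initial condition $\Phi_0 = \id{}$ supplied by the Cauchy problem \eqref{eq:PhiODE}, which in particular forces $\mathcal{V}_{t,0} = \Phi_t$. Throughout I abbreviate $\mathcal{U}_g := \mathrm{Ad}_{U(g)}$ and use that, by \eqref{eq:TheUcovariance}, $U$-covariance of $(\Phi_t)_{t\geqslant 0}$ means $\comm{\Phi_t}{\mathcal{U}_g} = 0$ for all $t\geqslant 0$ and $g\in\mathcal{G}$, and likewise that $(\mathcal{V}_{t,s})_{s\leqslant t}$ is $U$-covariant exactly when $\comm{\mathcal{V}_{t,s}}{\mathcal{U}_g} = 0$ for all admissible $t,s$ and all $g$.

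For part (1) I would start from the identity $\Phi_s\circ\comm{\mathcal{V}_{t,s}}{\mathcal{U}_g} = \Phi_s\mathcal{V}_{t,s}\mathcal{U}_g - \Phi_s\mathcal{U}_g\mathcal{V}_{t,s} = \Phi_t\mathcal{U}_g - \Phi_s\mathcal{U}_g\mathcal{V}_{t,s}$, the last step being divisibility. If $(\Phi_t)$ is $U$-covariant, substitute $\Phi_t\mathcal{U}_g = \mathcal{U}_g\Phi_t = \mathcal{U}_g\Phi_s\mathcal{V}_{t,s}$ and $\Phi_s\mathcal{U}_g = \mathcal{U}_g\Phi_s$ to see the right-hand side vanishes, i.e.~$\im{\comm{\mathcal{V}_{t,s}}{\mathcal{U}_g}}\subseteq\ker\Phi_s$. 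Conversely, given that inclusion for all $s\leqslant t$, specialize to $s=0$: since $\ker\Phi_0 = \{0\}$ and $\mathcal{V}_{t,0}=\Phi_t$, it collapses to $\comm{\Phi_t}{\mathcal{U}_g}=0$, which, as $t$ and $g$ are arbitrary, is $U$-covariance of $(\Phi_t)$.

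Part (2) is then a corollary. If each $\Phi_t$ is invertible then $\ker\Phi_s=\{0\}$ for every $s$, so the inclusion in part (1) is equivalent to $\comm{\mathcal{V}_{t,s}}{\mathcal{U}_g}=0$ for all $t\geqslant 0$, $s\leqslant t$ and $g\in\mathcal{G}$, that is, to $U$-covariance of $(\mathcal{V}_{t,s})_{s\leqslant t}$; together with part (1) this yields the equivalence. I would also note that the direction ``$(\mathcal{V}_{t,s})$ $U$-covariant implies $(\Phi_t)$ $U$-covariant'' requires no invertibility: then $\comm{\mathcal{V}_{t,s}}{\mathcal{U}_g}=0$, whose image lies trivially in $\ker\Phi_s$, so part (1) applies. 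I expect no genuine obstacle, since the argument is purely formal commutator bookkeeping; the only subtle point is that one must invoke $\Phi_0=\id{}$ to identify $\mathcal{V}_{t,0}$ with $\Phi_t$, without which the $s=0$ slice of the image condition would not recover covariance in the ``if'' direction of (1).
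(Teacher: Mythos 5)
Your argument is correct and, for the ``only if'' direction of (1) and for (2), it coincides with the paper's: both rest on the identity $\Phi_s\circ\comm{\mathcal{V}_{t,s}}{\mathrm{Ad}_{U(g)}}=\Phi_t\circ\mathrm{Ad}_{U(g)}-\Phi_s\circ\mathrm{Ad}_{U(g)}\circ\mathcal{V}_{t,s}$ and on the observation that invertibility trivializes the kernels. Where you genuinely add something is the converse of (1): the paper's proof stops after establishing the image inclusion from covariance and does not spell out how the inclusion gives covariance back (the natural reversal of its chain of equalities would need covariance of $\Phi_s$, which is circular). Your specialization to $s=0$, using $\Phi_0=\id{}$ to force $\mathcal{V}_{t,0}=\Phi_t$ and $\ker\Phi_0=\{0\}$, closes that direction cleanly, and you correctly flag it as the one nontrivial step. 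Two small caveats: the proposition as stated only assumes divisibility, so $\Phi_0=\id{}$ is imported from the surrounding context of quantum evolution families (and of the Cauchy problem \eqref{eq:PhiODE}) rather than from the hypotheses — worth saying explicitly, since without it the ``if'' direction of (1) would indeed not follow; and your closing remark that covariance of the propagator implies covariance of the family without invertibility is a nice bonus, again contingent on that same normalization at $t=0$.
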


\begin{proof}
Divisibility condition \eqref{eq:DualDivisible} combined with \eqref{eq:TheUcovariance} implies
\begin{equation}
	\Phi_s \circ \mathcal{V}_{t,s} \circ \mathrm{Ad}_{U(g)} = \mathrm{Ad}_{U(g)} \circ \Phi_s \circ \mathcal{V}_{t,s} = \Phi_s \circ \mathrm{Ad}_{U(g)} \circ \mathcal{V}_{t,s},
\end{equation}
so for any $A\in B(H)$ we have
\begin{equation}\label{eq:VtsCommAd}
	\Phi_s \circ \left( \mathcal{V}_{t,s} \circ \mathrm{Ad}_{U(g)} - \mathrm{Ad}_{U(g)} \circ \mathcal{V}_{t,s} \right)(A) = 0,
\end{equation}
i.e.~$\comm{\mathcal{V}_{t,s}}{\mathrm{Ad}_{U(g)}}(A) \in \ker{\Phi_s}$ for all $s\leqslant t$, proving the first statement. For the second one, when $\Phi_t$ is invertible it has a trivial kernel so \eqref{eq:VtsCommAd} is satisfied if and only if $\ker{\comm{\mathcal{V}_{t,s}}{\mathrm{Ad}_{U(g)}}}$ is the whole $B(H)$, i.e.~when $\mathcal{V}_{t,s}$ and $\mathrm{Ad}_{U(g)}$ commute; this however is the $U$-covariance condition for $\mathcal{V}_{t,s}$.
\end{proof}

An analogous result regarding the generator itself can also be formulated (we omit the proof as it goes along the same lines):

\begin{proposition}\label{prop:PhiCovInv}
Let $(\Phi_t)_{t\geqslant 0}$ satisfy ODE \eqref{eq:PhiODE}. The following statements hold:
\begin{enumerate}
	\item If $\Phi_t$ is $U$-covariant then $\im{\comm{\mathcal{L}_t}{\mathrm{Ad}_{U(g)}}}\subseteq\ker{\Phi_t}$;
	\item If $\Phi_t$ is invertible then it is $U$-covariant if and only if $\mathcal{L}_t$ is $U$-covariant.
\end{enumerate}
\end{proposition}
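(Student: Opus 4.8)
The plan is to mirror the proof of Proposition \ref{prop:Vcov}, with the divisibility relation \eqref{eq:DualDivisible} replaced by the Master Equation \eqref{eq:PhiODE} and its Volterra integral form $\Phi_t = \id{} + \int_{0}^{t}\Phi_s\circ L_{s}^{\adj}\,ds$.

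For the first statement I would start from the assumed covariance $\Phi_t\circ\mathrm{Ad}_{U(g)} = \mathrm{Ad}_{U(g)}\circ\Phi_t$, valid for every $t\geqslant 0$, and differentiate in $t$ in norm; since $\mathrm{Ad}_{U(g)}$ is a fixed bounded operator it commutes with the derivative, so by \eqref{eq:PhiODE} the left-hand side becomes $\Phi_t\circ\mathcal{L}_t\circ\mathrm{Ad}_{U(g)}$ while the right-hand side becomes $\mathrm{Ad}_{U(g)}\circ\Phi_t\circ\mathcal{L}_t = \Phi_t\circ\mathrm{Ad}_{U(g)}\circ\mathcal{L}_t$, using covariance of $\Phi_t$ once more. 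Subtracting yields $\Phi_t\circ\comm{\mathcal{L}_t}{\mathrm{Ad}_{U(g)}} = 0$, i.e.~$\im{\comm{\mathcal{L}_t}{\mathrm{Ad}_{U(g)}}}\subseteq\ker{\Phi_t}$. Equivalently, one repeats the argument of \eqref{eq:PhiLCov}--\eqref{eq:PhiG2} with the Volterra equation, avoiding differentiation altogether; this is the ``same lines'' alluded to in the text.

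For the second statement, if $\Phi_t$ is invertible then $\ker{\Phi_t}=\{0\}$, so part (1) forces $\comm{\mathcal{L}_t}{\mathrm{Ad}_{U(g)}}=0$, which is precisely the $U$-covariance of $\mathcal{L}_t$; this settles the ``only if'' direction. For the converse (which in fact does not even require invertibility) I would assume $\mathcal{L}_t$ is $U$-covariant for all $t$ and set $\Psi_t = \mathrm{Ad}_{U(g)}^{-1}\circ\Phi_t\circ\mathrm{Ad}_{U(g)}$; a direct computation using \eqref{eq:PhiODE} and the covariance of $\mathcal{L}_t$ gives $\frac{d}{dt}\Psi_t = \Psi_t\circ\mathcal{L}_t$ and $\Psi_0 = \id{}$, so $\Psi_t$ solves the same linear Cauchy problem as $\Phi_t$; uniqueness of solutions of the Volterra equation (Picard iteration / Gr\"onwall) then yields $\Psi_t = \Phi_t$, i.e.~$\Phi_t$ is $U$-covariant.

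The substantive content is just the short algebraic identity isolating $\Phi_t\circ\comm{\mathcal{L}_t}{\mathrm{Ad}_{U(g)}}$ as the obstruction to covariance; the only points needing a word of care are the regularity justifications — that $t\mapsto\Phi_t$ is norm-differentiable with derivative $\Phi_t\circ\mathcal{L}_t$ (immediate since the generator is bounded and the family norm-continuous), that $\mathrm{Ad}_{U(g)}$ may be pulled through the derivative and the integral (immediate from boundedness), and that \eqref{eq:PhiODE} has a unique solution. None of these is a genuine obstacle, which is exactly why the paper defers the proof.
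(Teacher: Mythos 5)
Your proof is correct and follows essentially the route the paper intends: part (1) is the computation of \eqref{eq:PhiLCov} combined once more with covariance of $\Phi_t$ to isolate $\Phi_t\circ\comm{\mathcal{L}_t}{\mathrm{Ad}_{U(g)}}=0$, and part (2) uses triviality of $\ker\Phi_t$ for the ``only if'' and uniqueness of the Cauchy problem for the ``if'', exactly the ``same lines'' as Proposition \ref{prop:Vcov} that the paper invokes when omitting the proof.
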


Naively speaking, covariance of the generator seems as the most natural and intuitive measure of covariance of the dynamics, despite being not a necessary condition but only a sufficient one. However, a very frequently appearing scenario in applications is that the generator which defines equation of motion for density operators (or equivalently observables in Heisenberg picture) is \emph{regular}, i.e.~$t\mapsto\| L_t \|$ is uniformly bounded over (sufficiently large interval in) $\reals_+$. In such case the dynamical maps are always invertible and so the $U$-covariance of the generator and dynamics itself become equivalent.

\begin{proposition}\label{prop:ODEregularUcovL}
Let the quantum evolution family $(\Lambda_t)_{t\geqslant 0}$ satisfy ODE \eqref{eq:LambdaODE} with $L_t$ regular. Then, $(\Lambda_t)_{t\geqslant 0}$ is U-covariant if and only if $L_t$ is U-covariant for all $t\geqslant 0$.
\end{proposition}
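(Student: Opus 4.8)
The statement is a soft corollary of Proposition~\ref{prop:PhiCovInv}(2), whose sole hypothesis --- invertibility of the dynamical map --- is forced by regularity of the generator. Accordingly, the plan is: (i) show that regularity of $L_t$ makes $\Lambda_t$, and equivalently its dual $\Phi_t$, invertible for every $t\geqslant 0$; (ii) apply the invertible case of Proposition~\ref{prop:PhiCovInv} in the Heisenberg picture; and (iii) transport the conclusion back to the Schr\"odinger picture via the duality of Proposition~\ref{prop:CovMapDual}.

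For step (i) I would pass to the dual family $(\Phi_t)_{t\geqslant 0}$ solving \eqref{eq:PhiODE}; since the duality $L_t\mapsto\mathcal{L}_t$ is isometric, $\mathcal{L}_t$ is regular exactly when $L_t$ is, say $M:=\sup_{t\in[0,T]}\|\mathcal{L}_t\|<\infty$ on every compact interval $[0,T]$. The linear Cauchy problem \eqref{eq:PhiODE} is then solved by the time-ordered exponential (Dyson) series, which converges absolutely in operator norm with bound $e^{MT}$; the same construction produces a two-parameter propagator $\mathcal{V}_{t,s}$ with $\mathcal{V}_{s,s}=\id{}$ obeying the cocycle identity $\mathcal{V}_{t,r}\circ\mathcal{V}_{r,s}=\mathcal{V}_{t,s}$, whence $\mathcal{V}_{t,s}\circ\mathcal{V}_{s,t}=\id{}$, and in particular $\Phi_t=\mathcal{V}_{t,0}$ is invertible with bounded inverse $\mathcal{V}_{0,t}$. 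When $H\simeq\complexes^n$ --- the situation of interest in Section~\ref{sec:Ddiv} --- one may instead shortcut this by the Abel--Jacobi--Liouville identity $\det\Phi_t=\exp\int_0^t\tr{(\mathcal{L}_s)}\,ds\neq 0$. Dualizing, $\Lambda_t$ is invertible for every $t\geqslant 0$ as well.

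For steps (ii)--(iii): invertibility of $\Phi_t$ places us under the hypothesis of Proposition~\ref{prop:PhiCovInv}(2), so $\Phi_t$ is $U$-covariant if and only if $\mathcal{L}_t$ is. Proposition~\ref{prop:CovMapDual} and the remark following it then relate these to the predual maps --- $\Lambda_t$ is $U$-covariant iff $\Phi_t$ is, and $L_t$ is $U$-covariant iff $\mathcal{L}_t$ is --- here using that, in the present dynamical context, the only admissible covariance condition, cf.~\eqref{eq:TheUcovariance}, is commutation with $\mathrm{Ad}_{U(g)}$, a condition insensitive to replacing $U$ by $U^{-1}$. Chaining the equivalences yields $\Lambda_t$ $U$-covariant $\Leftrightarrow$ $L_t$ $U$-covariant for every $t\geqslant 0$, which is the assertion. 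There is no genuine obstacle: apart from the textbook fact that a linear ODE with locally bounded generator has an invertible fundamental solution, everything needed is already supplied by the preceding propositions, and the only point deserving mild care is to keep the Schr\"odinger and Heisenberg pictures (and the attendant representation $U$ versus $U^{-1}$) straight when invoking Proposition~\ref{prop:PhiCovInv}.
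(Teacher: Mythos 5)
Your proposal is correct and follows essentially the same route as the paper: pass to the dual family $(\Phi_t)$, observe that regularity of $L_t$ forces invertibility of $\Phi_t$, invoke Proposition~\ref{prop:PhiCovInv}(2), and translate back via Proposition~\ref{prop:CovMapDual} (the paper phrases the last step as $U^{-1}$-covariance of the dual rather than your remark that $\mathrm{Ad}_{U(g)}$-commutation is insensitive to $U\mapsto U^{-1}$, but these amount to the same thing). Your explicit justification of invertibility via the Dyson/time-ordered-exponential propagator merely fills in a step the paper states without proof.
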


\begin{proof}
By Proposition \ref{prop:CovMapDual} the $U$-covariance of $\Lambda_t$ is equivalent to $U^{-1}$-covariance of $\Phi_t$. Regularity of $L_t$ then implies regularity of $\mathcal{L}_t$ and invertibility of $\Phi_t$ in consequence, so Proposition \ref{prop:PhiCovInv} invoked for representation $U^{-1}$ shows $\mathcal{L}_t$ must be $U^{-1}$-covariant, i.e.~$L_t$ is $U$-covariant.
\end{proof}

\subsection{D-divisible families on \texorpdfstring{$\matr{n}$}{matrix algebras}}
\label{sec:DdivisibleFamilies}

In \cite{Szczygielski2023} it was shown that the quantum evolution family $(\Lambda_t)_{t\geqslant 0}$ on $\matr{n}$ which satisfies an ODE \eqref{eq:LambdaODE} with $L_t$ regular is D-divisible, i.e.~its propagator $V_{t,s}$ is a \emph{decomposable} map, if and only if the generator admits a structure
\begin{equation}\label{eq:LtDdivisible}
	L_t = -i\comm{H_t}{\cdot} + \varphi_t - \frac{1}{2}\acomm{\varphi_{t}^{\adj}(I)}{\cdot}
\end{equation}
for Hermitian $H_t$ and decomposable map $\varphi_t$. In time-independent case equation \eqref{eq:LtDdivisible} defines the most general form of the generator of trace preserving (or unital, in dual version) decomposable semigroup on $\matr{n}$, which extends seminal result by V.~Gorini, A.~Kossakowski, E.~C.~G.~Sudarshan \cite{Gorini1976} and G.~Lindblad \cite{Lindblad1976} for completely positive semigroups. We remark here that characterization \eqref{eq:LtDdivisible} applies only to finite-dimensional C*-algebras and it is currently unknown if it admits a direct generalization to the case of any $B(H)$. Such characterization, if true, would provide a substantial extension of \cite{Lindblad1976} and would contribute to more general theory of dynamical semigroups beyond complete positivity.

For start, let us take the generator constant, $L_t = L$, so the evolution family is simply a semigroup, $(e^{tL})_{t\geqslant 0}$. The following theorem can be considered a generalization of result by Holevo \cite{Holevo1993} towards decomposable case, applied for finite-dimensional C*-algebras:

\begin{proposition}\label{prop:LcovDec}
Generator $L$ of decomposable and trace preserving semigroup on $\matr{n}$ is $U$-covariant if and only if it admits a form \eqref{eq:LtDdivisible} (with time dependence suppressed) for Hermitian matrix $H\in U(\mathcal{G})^\prime$ and a $U$-covariant map $\varphi\in\dece{\matr{n}}$.
\end{proposition}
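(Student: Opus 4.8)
The plan is to exploit the equivalence between $U$-covariance of the semigroup $(e^{tL})_{t\geqslant 0}$ and $U$-covariance of its generator $L$, and then to push that covariance through the structural decomposition \eqref{eq:LtDdivisible}. First I would observe that a trace-preserving decomposable semigroup is automatically regular (its generator is a fixed bounded operator, so $t\mapsto\|L\|$ is trivially bounded), hence Proposition \ref{prop:ODEregularUcovL} applies: the semigroup is $U$-covariant if and only if $L$ is $U$-covariant, i.e. $L\circ\mathrm{Ad}_{U(g)} = \mathrm{Ad}_{U(g)}\circ L$ for all $g\in\mathcal{G}$ (in the dual/Heisenberg picture, which is the one in which \eqref{eq:LtDdivisible} is written; the predual translation only swaps $U$ for $U^{-1}$ and is harmless). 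So the whole problem reduces to characterizing when a map of the form \eqref{eq:LtDdivisible} commutes with every $\mathrm{Ad}_{U(g)}$.

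Next I would write out $L = -i[H,\cdot] + \varphi - \tfrac12\{\varphi^{\adj}(I),\cdot\}$ and impose $\mathrm{Ad}_{U(g)}\circ L = L\circ\mathrm{Ad}_{U(g)}$. The key is that $\mathrm{Ad}_{U(g)}$ is a $*$-automorphism, so it interacts cleanly with commutators and anticommutators: $\mathrm{Ad}_{U(g)}([H,A]) = [U(g)HU(g)^{-1}, \mathrm{Ad}_{U(g)}(A)]$ and similarly for the anticommutator term. The hard part will be separating the three summands: a priori the commutator part, the $\varphi$ part, and the anticommutator part could conspire to cancel each other's non-covariance. I expect the clean way around this is the standard trick from the GKLS/Holevo theory — project onto the trace-preserving, trace-less components, or equivalently use the uniqueness (up to the obvious ambiguity) of the decomposition \eqref{eq:LtDdivisible}. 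Concretely, one observes that $\varphi$ is recovered from $L$ as (essentially) the ``purely dissipative, off-identity'' part, so if $L$ is $U$-covariant then applying $\mathrm{Ad}_{U(g)}$ to $L$ and matching terms in the canonical decomposition forces both $\varphi$ to be $U$-covariant and the Hamiltonian part $-i[H,\cdot]$ to be $U$-covariant; the latter is equivalent (again by uniqueness of the Hamiltonian up to a real scalar, and since $H$ can be fixed traceless) to $U(g)HU(g)^{-1} - H$ being a scalar, hence $0$ for a compact group where one can average, i.e. $H\in U(\mathcal{G})^{\prime}$. Conversely, if $H\in U(\mathcal{G})^{\prime}$ then $\mathrm{Ad}_{U(g)}$ commutes with $-i[H,\cdot]$; if moreover $\varphi$ is $U$-covariant then $\varphi^{\adj}$ intertwines the adjoint actions, so $\varphi^{\adj}(I)$ is fixed by $\mathrm{Ad}_{U(g)}$ (since $U(g)$ is unitary, $\mathrm{Ad}_{U(g)}^{\adj}=\mathrm{Ad}_{U(g)^{-1}}$ fixes $I$), hence $\varphi^{\adj}(I)\in U(\mathcal{G})^{\prime}$ and the anticommutator term is $U$-covariant too; summing, $L$ is $U$-covariant.

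The main obstacle, then, is making the ``separation of terms'' rigorous — i.e. arguing that $U$-covariance of the sum forces $U$-covariance of each structural piece. I would handle this either (i) by invoking the uniqueness statement for the representation \eqref{eq:LtDdivisible} established in \cite{Szczygielski2023} (if $L$ is written in that form with $H$ traceless and $\varphi$ suitably normalized, the data $(H,\varphi)$ are determined by $L$), so that applying the $*$-automorphism $\mathrm{Ad}_{U(g)}$ to both sides and re-reading the canonical form yields $(U(g)HU(g)^{-1},\mathrm{Ad}_{U(g)}\circ\varphi\circ\mathrm{Ad}_{U(g)}^{-1}) = (H,\varphi)$ directly; or (ii) by the averaging argument — define $\proj{U}$ as in the proof of Theorem \ref{thm:MainResultDec} and note that $\proj{U}(L)=L$, that $\proj{U}$ preserves the class of decomposable generators of the form \eqref{eq:LtDdivisible} (using Theorem \ref{thm:MainResultDec} for the $\varphi$ part and the fact that averaging a Hamiltonian term produces a Hamiltonian term with $\int U(g)HU(g)^{-1}d\mu(g) \in U(\mathcal{G})^{\prime}$), and that on the level of the canonical data averaging does exactly what is claimed. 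Route (i) is cleaner and is what I would write up; route (ii) is the fallback if the uniqueness statement needs the extra hypothesis of fixing $H$ traceless, which one may always assume by absorbing $\mathrm{tr}\,H$ into a phase that does not affect $[H,\cdot]$.
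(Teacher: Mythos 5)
Your fallback route (ii) is precisely the paper's proof: one averages over $\mathcal{G}$ with the projection $\proj{U}$ from the proof of Theorem \ref{thm:MainResultDec}, uses $\proj{U}(L)=L$, computes that $\proj{U}(L) = -i\comm{\hat{H}}{\cdot} + \psi - \tfrac12\acomm{\psi^{\adj}(I)}{\cdot}$ with $\hat{H} = \int_{\mathcal{G}} U(g)HU(g)^{\hadj}\,d\mu(g)$ and $\psi = \proj{U}(\varphi)$, and then checks that $\hat{H}\in U(\mathcal{G})^{\prime}$ by translation invariance of the Haar integral while $\psi$ is $U$-covariant and still decomposable by Theorem \ref{thm:MainResultDec}; your converse direction also matches. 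Your preferred route (i), however, is shakier than you suggest: the paper never establishes a uniqueness statement for the data $(H,\varphi)$ in \eqref{eq:LtDdivisible}, and the usual GKLS gauge freedom means that making $(H,\varphi)$ canonical requires shifting $\varphi$ by a map of the form $a\mapsto Ka+aK^{\hadj}$ (absorbing the components along $F_{n^2}$), which is neither CP nor coCP — so you would still have to argue that the normalized $\varphi$ remains decomposable and that the normalization commutes with $\mathrm{Ad}_{U(g)}$. The averaging argument sidesteps all of this because $\proj{U}$ manifestly preserves the cone $\dece{\matr{n}}$ and produces the required covariant $H$ and $\varphi$ constructively; I would write up route (ii). A minor point: your opening reduction from covariance of the semigroup to covariance of $L$ is the content of Theorem \ref{thm:SemigroupStructureTheorem}, not of this Proposition, whose hypothesis is already a statement about $L$ itself — it is harmless but not needed here.
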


\begin{proof}
Only the ``$\Rightarrow$'' direction needs to be shown. Let $L$ be of a form \eqref{eq:LtDdivisible} for Hermitian $H$ and decomposable $\phi$ and assume it is $U$-covariant. Define yet again the projection $\proj{U} : B(\matr{n})\to\mathcal{C}_U$ onto subspace $\mathcal{C}_U$ of $U$-covariant maps,
\begin{equation}
	\proj{U}(\phi) = \int_\mathcal{G} \mathrm{Ad}_{U(g)} \circ \phi \circ \mathrm{Ad}_{U(g)}^{-1} \, d\mu(g),
\end{equation}
i.e.~as in the proof of Theorem \ref{thm:MainResultDec}. Naturally, we have $\proj{U}(L) = L$ and, after some easy algebra,
\begin{equation}
	\proj{U}(L)(a) = -i \comm{\hat{H}}{a} + \psi(a) - \frac{1}{2}\acomm{\psi^{\prime}(I)}{a}
\end{equation}
for
\begin{equation}
	\hat{H} = \int_\mathcal{G} U(g) H U(g)^\hadj \, d\mu(g), \quad \psi = \proj{U}(\phi).
\end{equation}
Map $\psi$ is then $U$-covariant and still decomposable by Theorem \ref{thm:MainResultDec}. Moreover, for any $g\in\mathcal{G}$ we have
\begin{align}
	U(g) \hat{H} &= \int_\mathcal{G} U(g)U(h) H U(h)^\hadj \, d\mu(h) = \int_\mathcal{G} U(g) U(g^{-1}h) H U(g^{-1}h)^{\hadj} \, d\mu(h)\\
	&= \int_\mathcal{G} U(h) H U(h) U(g)\, d\mu(h) = \hat{H} U(g),\nonumber
\end{align}
by translation invariance of the integral, i.e.~$U(g)$ commutes with $\hat{H}$ for all $g\in\mathcal{G}$ and the proof is finished.
\end{proof}

We conclude this section with the following

\begin{theorem}\label{thm:SemigroupStructureTheorem}
Let $U : \mathcal{G}\to\matr{n}$ be a unitary representation. A semigroup $(e^{tL})_{t\geqslant 0}$ of decomposable trace preserving maps on $\matr{n}$ is $U$-covariant if and only if there exists a Hermitian matrix $H\in U(\mathcal{G})^\prime$ and a $U$-covariant map $\varphi\in\dece{\matr{n}}$ such that $L$ is of a form \eqref{eq:LtDdivisible} (with time suppressed).
\end{theorem}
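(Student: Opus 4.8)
The plan is to obtain Theorem \ref{thm:SemigroupStructureTheorem} by splicing together two facts that are already available: the characterization \eqref{eq:LtDdivisible} of generators of decomposable, trace preserving semigroups on $\matr{n}$ established in \cite{Szczygielski2023}, and Proposition \ref{prop:LcovDec}, which says exactly when such a generator is $U$-covariant. The only thing left to supply is the bridge between covariance of the semigroup $(e^{tL})_{t\geqslant 0}$ and covariance of its generator $L$. Since $L$ is bounded and constant it is in particular \emph{regular}, so $(e^{tL})_{t\geqslant 0}$ is a quantum evolution family solving the Master Equation \eqref{eq:LambdaODE} with $L_t\equiv L$, and Proposition \ref{prop:ODEregularUcovL} immediately yields: $(e^{tL})_{t\geqslant 0}$ is $U$-covariant if and only if $L$ is $U$-covariant. (Equivalently, one may differentiate $e^{tL}\circ\mathrm{Ad}_{U(g)}=\mathrm{Ad}_{U(g)}\circ e^{tL}$ at $t=0$ for one implication and expand the exponential in a power series for the other.)

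For the ``only if'' direction I would argue as follows. Assume $(e^{tL})_{t\geqslant 0}$ is a $U$-covariant semigroup of decomposable, trace preserving maps on $\matr{n}$; norm-continuity and trace preservation force $L$ to be a bounded generator with $\tr{L(\rho)}=0$, and its propagator $V_{t,s}=e^{(t-s)L}$ is decomposable, so the family is D-divisible. The time-independent case of the structure theorem of \cite{Szczygielski2023} then yields a representation $L=-i\comm{H}{\cdot}+\varphi-\tfrac{1}{2}\acomm{\varphi^{\adj}(I)}{\cdot}$ with $H$ Hermitian and $\varphi\in\dece{\matr{n}}$. By the first paragraph $L$ is $U$-covariant, so Proposition \ref{prop:LcovDec} refines this to a representation in which additionally $H\in U(\mathcal{G})^\prime$ and $\varphi$ is $U$-covariant, which is exactly the claimed form.

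For the ``if'' direction, start from $L=-i\comm{H}{\cdot}+\varphi-\tfrac{1}{2}\acomm{\varphi^{\adj}(I)}{\cdot}$ with $H\in U(\mathcal{G})^\prime$ Hermitian and $\varphi\in\dece{\matr{n}}$ $U$-covariant. The time-independent structure theorem of \cite{Szczygielski2023} gives that $(e^{tL})_{t\geqslant 0}$ is a semigroup of decomposable, trace preserving maps, so only $U$-covariance of the dynamics remains, and this is the easy implication of Proposition \ref{prop:LcovDec}: $-i\comm{H}{\cdot}$ commutes with $\mathrm{Ad}_{U(g)}$ because $H\in U(\mathcal{G})^\prime$; dualizing the covariance of $\varphi$ and using $\mathrm{Ad}_{U(g)}^{\adj}=\mathrm{Ad}_{U(g)}^{-1}$ shows $\varphi^{\adj}$ is $U$-covariant, whence $\varphi^{\adj}(I)=\mathrm{Ad}_{U(g)}(\varphi^{\adj}(I))\in U(\mathcal{G})^\prime$ and $-\tfrac{1}{2}\acomm{\varphi^{\adj}(I)}{\cdot}$ commutes with $\mathrm{Ad}_{U(g)}$ as well; and $\varphi$ commutes with $\mathrm{Ad}_{U(g)}$ by hypothesis. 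Adding the three $\mathrm{Ad}_{U(g)}$-equivariant pieces makes $L$ $U$-covariant, hence, by the first paragraph, $(e^{tL})_{t\geqslant 0}$ is $U$-covariant.

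There is no genuine obstacle in this argument: the two substantial inputs, the semigroup structure theorem of \cite{Szczygielski2023} and the averaging-based proof of Proposition \ref{prop:LcovDec} via the projection $\proj{U}$, are both imported, and the new content is only the elementary remark that a constant generator is regular, which is what licenses Proposition \ref{prop:ODEregularUcovL} and thus converts covariance of the semigroup into covariance of its generator. The most delicate bookkeeping is verifying that each of the three summands of \eqref{eq:LtDdivisible} is individually $\mathrm{Ad}_{U(g)}$-equivariant under the stated hypotheses, in particular that $\varphi^{\adj}(I)$ lies in $U(\mathcal{G})^\prime$.
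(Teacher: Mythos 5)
Your proposal is correct and follows essentially the same route as the paper, whose entire proof is the observation that a constant bounded generator is regular (so all $e^{tL}$ are invertible) and that Propositions \ref{prop:ODEregularUcovL} and \ref{prop:LcovDec} then combine with the structure theorem of \cite{Szczygielski2023} to give the claim. Your additional verification that each summand of \eqref{eq:LtDdivisible} is individually $\mathrm{Ad}_{U(g)}$-equivariant (including $\varphi^{\adj}(I)\in U(\mathcal{G})^\prime$) is sound and merely spells out the converse direction of Proposition \ref{prop:LcovDec} that the paper treats as immediate.
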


\begin{proof}
Since all maps of a form $e^{tL}$ are invertible, trivial application of Propositions \ref{prop:ODEregularUcovL} and \ref{prop:LcovDec} yields the claim. 
\end{proof}

As a final result of the article, we present a generalization of Theorem \ref{thm:SemigroupStructureTheorem} which covers the case of regular time-dependent generators.

\begin{theorem}\label{thm:DdivisibleStructureTheorem}
Let $(\Lambda_t)_{t\geqslant 0}$ be a D-divisible family of trace preserving maps on $\matr{n}$ satisfying ODE \eqref{eq:LambdaODE} with $L_t$ regular and let $U : \mathcal{G}\to\matr{n}$ be a unitary representation. Then, $(\Lambda_t)_{t\geqslant 0}$ is $U$-covariant if and only if there exist a Hermitian matrix $H_t\in U(\mathcal{G})^\prime$ and a $U$-covariant map $\varphi_t \in \dece{\matr{n}}$ such that $L_t$ admits a form \eqref{eq:LtDdivisible}.
\end{theorem}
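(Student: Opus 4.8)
The plan is to reduce the dynamical statement to a pointwise-in-time statement about the generator by invoking regularity, and then to rerun the averaging argument behind Proposition \ref{prop:LcovDec} at each frozen time. First I would treat \emph{necessity}: assuming $(\Lambda_t)_{t\geqslant 0}$ is $U$-covariant, Proposition \ref{prop:ODEregularUcovL} (applicable because $L_t$ is regular) gives that $L_t$ is $U$-covariant for every $t\geqslant 0$. Since the family is D-divisible, the characterization \eqref{eq:LtDdivisible} of \cite{Szczygielski2023} supplies, for each $t$, \emph{some} Hermitian $H_t$ and \emph{some} decomposable $\varphi_t\in\dece{\matr{n}}$ realizing \eqref{eq:LtDdivisible}, but with no control on whether $H_t\in U(\mathcal{G})^\prime$ or $\varphi_t$ is covariant. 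To fix this, I would fix $t$ and apply the projection $\proj{U}$ onto $\mathcal{C}_U$ exactly as in the proof of Proposition \ref{prop:LcovDec}; that computation is purely algebraic and uses only that $L_t$ is a single linear map of the form \eqref{eq:LtDdivisible}, not any semigroup property. Because $L_t$ is $U$-covariant one has $\proj{U}(L_t)=L_t$, while evaluating $\proj{U}(L_t)$ shows it is again of the form \eqref{eq:LtDdivisible} with $H_t$ replaced by $\hat H_t=\int_{\mathcal{G}} U(g)H_t U(g)^{\hadj}\,d\mu(g)$ — Hermitian, and in $U(\mathcal{G})^\prime$ by translation invariance of $\mu$ — and $\varphi_t$ replaced by $\psi_t=\proj{U}(\varphi_t)$, which is $U$-covariant by construction and still decomposable by Theorem \ref{thm:MainResultDec}.

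For \emph{sufficiency} I would go the other way: given \eqref{eq:LtDdivisible} with $H_t\in U(\mathcal{G})^\prime$ Hermitian and $\varphi_t\in\dece{\matr{n}}$ $U$-covariant, I would verify $U$-covariance of $L_t$ term by term. The Hamiltonian part $-i\comm{H_t}{\cdot}$ commutes with $\mathrm{Ad}_{U(g)}$ because $H_t$ commutes with each $U(g)$; the map $\varphi_t$ is $U$-covariant by hypothesis; and for the dissipative term it suffices to note that $U$-covariance of $\varphi_t$ transfers to its trace-dual $\varphi_{t}^{\adj}$ (take adjoints in $\varphi_t\circ\mathrm{Ad}_{U(g)}=\mathrm{Ad}_{U(g)}\circ\varphi_t$ and use $\mathrm{Ad}_{U(g)}^{\adj}=\mathrm{Ad}_{U(g)^{-1}}$), so that $\varphi_{t}^{\adj}(I)=\mathrm{Ad}_{U(g)}\!\left(\varphi_{t}^{\adj}(I)\right)\in U(\mathcal{G})^\prime$ and hence $-\tfrac{1}{2}\acomm{\varphi_{t}^{\adj}(I)}{\cdot}$ also commutes with $\mathrm{Ad}_{U(g)}$. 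Thus $L_t$ is $U$-covariant for every $t$, and another application of Proposition \ref{prop:ODEregularUcovL} delivers $U$-covariance of $(\Lambda_t)_{t\geqslant 0}$.

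I do not expect a genuinely new analytic obstacle here: the content over the semigroup case of Theorem \ref{thm:SemigroupStructureTheorem} is organisational, namely realizing that regularity turns the dynamical statement into the generator statement and that the averaging in Proposition \ref{prop:LcovDec} is valid verbatim at each frozen $t$. The only point I would flag for care is that the data $(\hat H_t,\psi_t)$ produced by $\proj{U}$ need not inherit continuity in $t$ from the original decomposition; but since the theorem only claims existence of such data for each $t$, and $\proj{U}$ is a bounded idempotent with $\proj{U}(L_t)=L_t$ so that regularity of $t\mapsto L_t$ is untouched, this is harmless.
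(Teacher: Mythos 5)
Your proposal is correct and follows essentially the same route as the paper: reduce to the generator via Proposition \ref{prop:ODEregularUcovL} (using regularity), invoke the D-divisibility characterization \eqref{eq:LtDdivisible}, and obtain $H_t\in U(\mathcal{G})^\prime$ and covariant $\varphi_t$ by the averaging argument of Proposition \ref{prop:LcovDec} applied at each frozen $t$. Your extra remarks — that the averaging in Proposition \ref{prop:LcovDec} uses no semigroup property, the term-by-term verification of sufficiency, and the note that continuity in $t$ of the projected data is not needed — merely make explicit details the paper delegates to its cited propositions.
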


\begin{proof}
When the family is D-divisible, \cite{Szczygielski2023} implies $L_t$ must be of a form \eqref{eq:LtDdivisible} for Hermitian $H_t$ and decomposable $\varphi_t$. Regularity of $L_t$ implies, via Proposition \ref{prop:ODEregularUcovL}, that $(\Lambda_t)_{t\geqslant 0}$ is $U$-covariant if and only if $L_t$ is $U$-covariant for all $t\geqslant 0$. From Proposition \ref{prop:LcovDec} it then follows that $H_t \in U(\mathcal{G})^\prime$ and $\varphi_t$ is $U$-covariant. Conversely, when $L_t$ admits the claimed form then it is automatically $U$-covariant, so Proposition \ref{prop:ODEregularUcovL} yields the family $(\Lambda_t)_{t\geqslant 0}$ must be $U$-covariant, and D-divisible by \cite{Szczygielski2023}.
\end{proof}

\section{Summary}
\label{sec:Summary}

We presented some extensions of a well-established theory of covariant completely positive maps on C*-algebras onto the broader class of decomposable maps. Analogously to the initial seminal work of \cite{Scutaru1979}, a characterization of such was given in a form of a dilation theorem. A possible quantum physics application was highlighted by providing necessary and sufficient conditions for covariance of decomposable quantum dynamical maps admitting regular, time-dependent generators. It would be possibly of interest to verify applicability of Theorems \ref{thm:SemigroupStructureTheorem} and \ref{thm:DdivisibleStructureTheorem} to maps on a general C*-algebra $B(H)$. This however is presently unclear due to the lack of characterization of decomposable, norm-continuous contraction semigroups in infinite dimension in the spirit of Lindblad \cite{Lindblad1976}, as we remarked earlier.

\appendix
\section{Mathematical supplement}
\label{app:SideTheorems}

\begin{theorem}\label{thm:CuNormClosed}
Let $\mathcal{C}_U$ denote the set of all $U$-covariant bounded linear maps from $\mathscr{A}$ to $B(H)$. Then, $\mathcal{C}_U$ is a closed linear subspace in $B(\mathscr{A},B(H))$.
\end{theorem}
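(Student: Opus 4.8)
The plan is to verify the two closure properties separately: linearity of $\mathcal{C}_U$ as a subspace, and its topological closedness in the operator norm topology. Both follow from straightforward manipulations, so the ``proof'' is really a routine check; there is no genuine obstacle here.

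First I would check that $\mathcal{C}_U$ is a linear subspace. Fix $g \in \mathcal{G}$ and recall that $\phi \in \mathcal{C}_U$ means $\phi \circ g = \mathrm{Ad}_{U(g)} \circ \phi$. Given $\phi, \psi \in \mathcal{C}_U$ and scalars $\lambda, \mu \in \complexes$, the map $a \mapsto g(a)$ is linear (it is a $*$-automorphism) and $\mathrm{Ad}_{U(g)}$ is linear, so $(\lambda\phi + \mu\psi) \circ g = \lambda(\phi\circ g) + \mu(\psi\circ g) = \lambda\,\mathrm{Ad}_{U(g)}\circ\phi + \mu\,\mathrm{Ad}_{U(g)}\circ\psi = \mathrm{Ad}_{U(g)}\circ(\lambda\phi+\mu\psi)$, valid for every $g$; hence $\lambda\phi + \mu\psi \in \mathcal{C}_U$, and $0 \in \mathcal{C}_U$ trivially.

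Next I would check norm-closedness. Suppose $(\phi_k)$ is a sequence in $\mathcal{C}_U$ with $\phi_k \to \phi$ in $B(\mathscr{A}, B(H))$, i.e.~$\supnorm{\phi_k - \phi} \to 0$. Fix $g \in \mathcal{G}$ and $a \in \mathscr{A}$. The key point is that both $b \mapsto b \circ g$ (precomposition) and $\mathrm{Ad}_{U(g)}$ (postcomposition by a bounded operator, here an isometry since $U(g)$ is unitary) are bounded linear operators on $B(\mathscr{A}, B(H))$, hence continuous. Applying them to the identity $\phi_k \circ g = \mathrm{Ad}_{U(g)} \circ \phi_k$ and passing to the limit gives $\phi \circ g = \mathrm{Ad}_{U(g)} \circ \phi$. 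Concretely, for each $a$,
\begin{equation}
\supnorm{\phi(g(a)) - U(g)\phi(a)U(g)^{-1}} \leqslant \supnorm{(\phi - \phi_k)(g(a))} + \supnorm{U(g)(\phi_k - \phi)(a)U(g)^{-1}} \leqslant 2\supnorm{\phi_k - \phi}\,\supnorm{a},
\end{equation}
using unitarity of $U(g)$ and the fact that $g$ is a $*$-automorphism (so $\supnorm{g(a)} = \supnorm{a}$). Letting $k \to \infty$ forces the left side to vanish, so $\phi(g(a)) = U(g)\phi(a)U(g)^{-1}$ for all $g, a$, i.e.~$\phi \in \mathcal{C}_U$. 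Combined with the subspace property, this shows $\mathcal{C}_U$ is a closed linear subspace of $B(\mathscr{A}, B(H))$.
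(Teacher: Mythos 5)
Your proposal is correct and follows essentially the same route as the paper: both arguments rest on the facts that precomposition with the isometric automorphism $g$ and postcomposition with the isometry $\mathrm{Ad}_{U(g)}$ are norm-continuous, so the covariance identity passes to the norm limit (the paper phrases this as uniqueness of the limit of $\phi_n\circ g$, while you give the equivalent single triangle-inequality estimate). No gaps.
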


\begin{proof}
$\mathcal{C}_U$ is clearly a subspace since for any two $U$-covariant maps $\phi_1$, $\phi_2$ their linear combination is also $U$-covariant. Let then $\phi\in B(\mathscr{A},B(H))$ be a limit point of $\mathcal{C}_U$, i.e.~let there exist a sequence $(\phi_n)$, $\phi_n \neq \phi$, of $U$-covariant maps such that $\phi_n \to \phi$ in norm. Mapping $\phi\mapsto\mathrm{Ad}_U \circ\phi$ is an isometry in $B(\mathscr{A},B(H))$ for any unitary $U$, so we have
\begin{equation}
	\| \phi_n - \phi \| = \| \mathrm{Ad}_{U(g)}\circ (\phi_n - \phi) \| = \| \phi_n \circ g - \mathrm{Ad}_{U(g)}\circ\phi \|
\end{equation}
for all $g\in\mathcal{G}$, and $(\phi_n \circ g)$ converges to $\mathrm{Ad}_{U(g)}\circ\phi$ in norm. On the other hand,
\begin{equation}
	\| \phi_n \circ g - \phi\circ g\| \leqslant \| \phi_n - \phi\| \cdot \|g(\cdot)\| \to 0
\end{equation}
hence $(\phi_n \circ g)$ converges to $\phi\circ g$. This implies $\phi\circ g = \mathrm{Ad}_{U(g)}\circ\phi$ by uniqueness of a limit, i.e.~$\phi\in\mathcal{C}_U$ and subspace $\mathcal{C}_U$ is closed in norm topology.
\end{proof}

\begin{theorem}\label{prop:ConesInvarianceP}
Let $C$ denote either $\cp{\mathscr{A}}{H}$ or $\cocp{\mathscr{A}}{H}$. Then we have $\proj{U}(C) \subset C$.
\end{theorem}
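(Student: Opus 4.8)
The plan is to prove the inclusion by working inside the integral defining $\proj{U}$: I will show that for every fixed $g\in\mathcal{G}$ the integrand $\mathrm{Ad}_{U(g)}\circ\phi\circ g^{-1}$ already lies in $C$ whenever $\phi$ does, and then exploit that $C$ is a closed convex cone and $\mu$ is a probability measure to conclude that the Bochner integral $\proj{U}(\phi)=\int_{\mathcal{G}}\mathrm{Ad}_{U(g)}\circ\phi\circ g^{-1}\,d\mu(g)$ stays in $C$. Thus the argument splits cleanly into a pointwise-in-$g$ algebraic part and a final limiting part.

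For $C=\cp{\mathscr{A}}{H}$ the pointwise part is immediate: each automorphism $g$ (hence $g^{-1}$) of $\mathscr{A}$ is a $*$-homomorphism and therefore completely positive, and $\mathrm{Ad}_{U(g)}$ is a $*$-automorphism of $B(H)$, again completely positive; since compositions of completely positive maps are completely positive, $\mathrm{Ad}_{U(g)}\circ\phi\circ g^{-1}\in\cp{\mathscr{A}}{H}$ for all $g$. For $C=\cocp{\mathscr{A}}{H}$ the key is the transposition identity $\transpose\circ\mathrm{Ad}_{U(g)}=\mathrm{Ad}_{\overline{U}(g)}\circ\transpose$, which I would verify directly from idempotence of $J_H$ and Lemma~\ref{lemma:BarTransProperties}: for $A\in B(H)$, $\transpose(U(g)AU(g)^\hadj)=J_H U(g)A^\hadj U(g)^\hadj J_H=(J_H U(g)J_H)\,(J_H A^\hadj J_H)\,(J_H U(g)^\hadj J_H)=\overline{U}(g)\,\transpose(A)\,\overline{U}(g)^\hadj$, using $\overline{U}(g)=J_H U(g)J_H$ and $(J_H U(g)J_H)^\hadj=J_H U(g)^\hadj J_H$. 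Hence $\transpose\circ(\mathrm{Ad}_{U(g)}\circ\phi\circ g^{-1})=\mathrm{Ad}_{\overline{U}(g)}\circ(\transpose\circ\phi)\circ g^{-1}$, and since $\transpose\circ\phi$ is completely positive by hypothesis (as $\phi$ is coCP), the right-hand side is a composition of completely positive maps, hence completely positive; equivalently $\mathrm{Ad}_{U(g)}\circ\phi\circ g^{-1}\in\cocp{\mathscr{A}}{H}$.

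For the limiting part, recall that both $\cp{\mathscr{A}}{H}$ and $\cocp{\mathscr{A}}{H}$ are BW-closed convex cones, and that the defining inequalities of (co)complete positivity are stable under pointwise — in particular norm — convergence, so $C$ is norm-closed. Since $\mu(\mathcal{G})=1$, the Bochner integral $\proj{U}(\phi)$ is a norm limit of Riemann-type sums $\sum_i \mu(E_i)\bigl(\mathrm{Ad}_{U(g_i)}\circ\phi\circ g_i^{-1}\bigr)$, each of which is a (finite) convex combination of elements of $C$ and therefore lies in $C$; closedness of $C$ then gives $\proj{U}(\phi)\in C$. The only regularity point to check is that $g\mapsto\mathrm{Ad}_{U(g)}\circ\phi\circ g^{-1}$ is genuinely Bochner integrable, which follows from strong continuity of $g\mapsto U(g)$ and continuity of $g\mapsto g(\cdot)$ into $B(\mathscr{A})$ on the compact group $\mathcal{G}$ — the same tacit regularity already used in the proof of Theorem~\ref{thm:MainResultDec}. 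I expect no genuine obstacle here; the one step carrying real content is the transposition identity above, which is what makes the coCP cone (not just the CP cone) invariant under $\proj{U}$.
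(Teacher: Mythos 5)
Your proof is correct, but it takes a genuinely different route from the paper's. You split the argument into (i) pointwise membership of the integrand in $C$ — trivial for CP, and for coCP via the intertwining identity $\transpose\circ\mathrm{Ad}_{U(g)}=\mathrm{Ad}_{\overline{U}(g)}\circ\transpose$, which you verify correctly from Lemma~\ref{lemma:BarTransProperties} — and (ii) the abstract fact that a Bochner integral against a probability measure of a function valued in a norm-closed convex set stays in that set. The paper instead builds an explicit Stinespring dilation of the whole integral: it dilates each $\psi_g=\mathrm{Ad}_{U(g)}\circ\phi\circ g^{-1}$ separately, assembles the direct integral $K=\int_{\mathcal{G}}^{\oplus}K_g\,d\mu(g)$ with the corresponding $V$ and $\pi$, and checks $V^\hadj\pi(a)V=\int_{\mathcal{G}}\psi_g(a)\,d\mu(g)$, so the averaged map is manifestly CP (resp.\ coCP with $*$-antihomomorphisms). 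Your approach is more elementary: it sidesteps the direct-integral machinery and the measurability of the field $g\mapsto K_g$, which the paper leaves tacit, and it makes the coCP case explicit rather than ``along the same lines.'' What the paper's construction buys is an explicit dilation of the averaged map, in keeping with the dilation-theoretic spirit of the rest of the article, though that extra structure is not used downstream. The only place I would tighten your write-up is the limiting step: rather than ``Riemann-type sums'' (the simple functions approximating a Bochner-integrable function need not take values in $C$), invoke the mean-value property $\int_{\mathcal{G}}f\,d\mu\in\overline{\conv{f(\mathcal{G})}}$ for a probability measure, or a Hahn--Banach separation argument against the closed convex cone $C$; either closes the gap cleanly.
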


\begin{proof}Let $\{\psi_g\}_{g\in\mathcal{G}}\subset\cp{\mathscr{A}}{H}$ be a family of CP maps s.t.~$g\mapsto \psi_g$ is measurable and uniformly bounded, i.e.~$\sup_{g\in\mathcal{G}}\|\psi_g\| < \infty$. Then, by Stinespring's dilation theorem \cite{Stinespring_1955} there exists a family $\{K_g\}_{g\in\mathcal{G}}$ of Hilbert spaces, family $\{V_g\}_{g\in\mathcal{G}}$ of bounded linear operators $V_g : H\to K_g$ and a family $\{\pi_g\}_{g\in\mathcal{G}}$ of *-homomorphisms $\pi_g : \mathscr{A}\to B(K_g)$ such that $\psi_g (a) = V_{g}^{\hadj} \pi_g (a) V_{g}$ for every $a\in\mathscr{A}$, $g\in\mathcal{G}$. Define a Hilbert space
\begin{equation}
	K = \int_{\mathcal{G}}^{\oplus} K_g \, d\mu(g),
\end{equation}
i.e.~a direct integral of family $\{K_g\}_{g\in\mathcal{G}}$, as well as linear operator $V : H\to K$ by
\begin{equation}
	Vx = \int_{\mathcal{G}}^{\oplus} V_g x \, d\mu(g), \quad x\in H,
\end{equation}
and a *-automorphism $\pi : \mathscr{A}\to B(K)$ by
\begin{equation}
	\pi (a) = \int_{\mathcal{G}}^{\oplus} \pi_g (a) \, d\mu(g), \quad a\in\mathscr{A}.
\end{equation}
Let us denote inner products in spaces $H$, $K_g$ and $K$ by $\iprod{\cdot}{\cdot}_H$, $\iprod{\cdot}{\cdot}_g$ and $\iprod{\cdot}{\cdot}_K$, respectively. Then, for any $x,y\in H$, $a\in\mathscr{A}$ we have
\begin{align}
	\iprod{x}{V^\hadj \pi(a) Vy}_H &= \iprod{Vx}{\pi(a)Vy}_K = \int_{\mathcal{G}} \iprod{V_g x}{\pi_{g}(a)V_g y}_g \, d\mu(g) \\
	&= \int_\mathcal{G} \iprod{x}{V_{g}^{\hadj}\pi_{g}(a)V_g y}_H \, d\mu(g) = \iprod{x}{\int_\mathcal{G} V_{g}^{\hadj}\pi_{g}(a)V_g \, d\mu(g) \, y}_H \nonumber \\
	&= \iprod{x}{\int_\mathcal{G} \psi_g (a) \, d\mu(g) \, y}_H \nonumber
\end{align}
which yields equality of operators
\begin{equation}
	V^\hadj \pi(a) V = \int_\mathcal{G} \psi_g (a) \, d\mu(g)
\end{equation}
satisfied for all $a\in\mathscr{A}$. This means the integral $\int_\mathcal{G} \psi_g \, d\mu(g)$ of a completely positive family $\{\psi_g\}_{g\in\mathcal{G}}$ admits a Stinespring form, i.e.~is also CP. Now, putting
\begin{equation}
	\psi_g (a) = \mathrm{Ad}_{U(g)}\circ\phi\circ g^{-1}
\end{equation}
where $\phi$ is CP yields $\proj{U}(\phi) \in \cp{\mathscr{A}}{H}$. The remaining coCP case then follows along the same lines after replacing a *-homomorphism with a *-antihomomorphism in Stinespring's form.
\end{proof}

\begin{theorem}\label{thm:CommFunctionsOperators}
Let $A,B\in B(X)$ for a Banach space $X$ and let $f,g : \complexes\to\complexes$. If there exist open sets $U_A$, $U_B$ containing spectra of $A$ and $B$, respectively, such that $f : U_A \to \complexes$ and $g : U_B \to \complexes$ are holomorphic and injective, then $\comm{f(A)}{g(B)} = 0$ if and only if $\comm{A}{B}=0$.
\end{theorem}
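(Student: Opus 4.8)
The plan is to rely on the holomorphic (Riesz--Dunford) functional calculus together with the observation that, under the injectivity hypothesis, $A$ can be recovered from $f(A)$ and $B$ from $g(B)$. The easy implication comes first: if $\comm{A}{B}=0$, then for every $\lambda$ in the resolvent set of $A$ one has $(\lambda-A)^{-1}B=B(\lambda-A)^{-1}$, so representing $f(A)=\frac{1}{2\pi i}\oint_{\Gamma}f(\lambda)(\lambda-A)^{-1}\,d\lambda$ by a Cauchy integral along a contour $\Gamma\subset U_A$ encircling $\spec{A}$ shows $\comm{f(A)}{B}=0$; running the same argument once more, now with $f(A)$ in the role played above by $B$, gives $\comm{f(A)}{g(B)}=0$. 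In short, holomorphic functions of commuting operators commute, and this is the only nontrivial content of the ``$\Leftarrow$'' direction.

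For ``$\Rightarrow$'' the point is that an injective holomorphic function of one complex variable is a biholomorphism onto its image. Since $f$ is holomorphic and injective on the open set $U_A\supseteq\spec{A}$, the image $f(U_A)$ is open and the inverse $f^{-1}:f(U_A)\to U_A$ is holomorphic (holomorphic inverse function / open mapping theorem). By the spectral mapping theorem for the holomorphic functional calculus, $\spec{f(A)}=f(\spec{A})\subseteq f(U_A)$, so $f^{-1}$ is holomorphic on a neighbourhood of $\spec{f(A)}$, the composition rule of the functional calculus applies, and it yields $f^{-1}(f(A))=(f^{-1}\circ f)(A)=A$ because $f^{-1}\circ f$ restricts to the identity on $U_A\supseteq\spec{A}$. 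Symmetrically $g^{-1}(g(B))=B$. Assuming now $\comm{f(A)}{g(B)}=0$ and invoking the ``$\Leftarrow$'' direction already established --- applied to the commuting pair $f(A),g(B)$ and to the holomorphic functions $f^{-1},g^{-1}$ --- we conclude $\comm{A}{B}=\comm{f^{-1}(f(A))}{g^{-1}(g(B))}=0$.

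I expect no deep difficulty; the subtlest points are bookkeeping ones. One must check that the domains line up so that the composition rule is legitimate ($f$ holomorphic on $U_A\supseteq\spec{A}$, and $f^{-1}$ holomorphic on the open set $f(U_A)\supseteq\spec{f(A)}$), and one must recall that injectivity of a holomorphic map already forces its derivative to be nowhere zero, so that the local inverse is genuinely holomorphic rather than merely continuous. No completeness argument or finer spectral-theoretic input is needed, and the statement specialises, with $f=g=\exp$ on a suitable strip, to the exponential case used elsewhere in the paper.
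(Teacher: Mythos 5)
Your proof is correct, and it follows the same governing idea as the paper's --- recover $A$ from $f(A)$ by observing that an injective holomorphic $f$ on $U_A$ is a biholomorphism onto the open set $f(U_A)\supseteq f(\spec{A})=\spec{f(A)}$, so that $f^{-1}$ is holomorphic near $\spec{f(A)}$ and $A=(f^{-1}\circ f)(A)=f^{-1}(f(A))$ by the composition rule --- but you finish more economically. The paper, after writing $A$ as the Cauchy integral $\frac{1}{2\pi i}\oint_{\gamma_A}f^{-1}(z)(z-f(A))^{-1}\,dz$, constructs a finite cover $E_{A,\epsilon}$ of $f(\spec{A})$ with Jordan-curve boundary, invokes Runge's theorem to approximate $f^{-1}$ uniformly by rational functions $r_n$ with poles off $\overline{E_{A,\epsilon}}$, and passes to the limit in $\comm{r_n(f(A))}{s_n(g(B))}=0$. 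You bypass all of that by noting that the statement ``holomorphic functions of commuting operators commute,'' which is exactly the content of your ``$\Leftarrow$'' direction (commutation with the resolvents, hence with the Riesz--Dunford integrals), can simply be applied a second time to the commuting pair $f(A)$, $g(B)$ and the holomorphic functions $f^{-1}$, $g^{-1}$. This is legitimate and removes the rational-approximation layer entirely; what the paper's route buys in exchange is only that the commutation at the approximating stage is verified for rational functions, where it is algebraically obvious, rather than via the integral representation. The two bookkeeping points you flag --- that the derivative of an injective holomorphic map is nowhere zero so the inverse is genuinely holomorphic, and that the spectral mapping theorem places $\spec{f(A)}$ inside the domain of $f^{-1}$ --- are precisely the hypotheses the composition rule needs, so the argument is complete.
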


\begin{proof}
The proof method is inspired by the one employed by Wermuth in \cite{Wermuth1997} where a special case of $f$, $g$ being operator exponentials was considered. We note that a virtually identical result, albeit achieved with different methods, was quite recently shown by Paliogiannis \cite{Paliogiannis2023}.

Note that since $f$ and $g$ are assumed holomorphic, power series expansions of $f(A)$ and $g(B)$ commute term by term so only ``$\Rightarrow$'' direction needs to be addressed. Let then $\spec{A} \subset U_A$, $\spec{B} \subset U_B$ with $U_A$, $U_B$ open and bounded (we may assume so since both $\spec{A}$, $\spec{B}$ are compact). By open mapping theorem, both images $f(U_A)$, $g(U_B)$ are open subsets of $\complexes$, while continuity implies that $f(\spec{A})$ and $f(\spec{B})$ are also compact. By injectivity assumption, both $f$ and $g$ are then invertible, i.e.~there exist functions
\begin{equation}
f^{-1} : f(U_A) \to U_A, \quad g^{-1} : g(U_B) \to U_B,
\end{equation}
which are also necessarily holomorphic on $f(U_A)$ and $f(U_B)$ \cite[Theorem 10.33]{Rudin2013}. Since $f(\spec{A})$ is closed and embedded in an open set $f(U_A)$, it may be covered by another open set $D_{A,\epsilon}$ defined as a union of all open balls, each one of radius $\epsilon$, covering all possible points of $f(\spec{A})$,
\begin{equation}
D_{A,\epsilon} = \bigcup_{z\in f(\spec{A})} K_{z,\epsilon}
\end{equation}
where such $\epsilon > 0$ is taken that $f(\spec{A}) \subset D_{A,\epsilon} \subset f(U_A)$. Then, compactness of $f(\spec{A})$ infers existence of a finite subcover $E_{A,\epsilon}$ of $f(\spec{A})$,
\begin{equation}
E_{A,\epsilon} = \bigcup_{j=1}^{n} K_{z_j, \epsilon}
\end{equation}
where $\{z_j : 1 \leqslant j \leqslant n\}\subset f(\spec{A})$, such that $f(\spec{A}) \subset E_{A,\epsilon} \subset D_{A.\epsilon}$. Then, $\overline{E_{A,\epsilon}}$ is compact and its boundary $\gamma_A$ is a Jordan curve, i.e.~closed, continuous and with no self-intersections. By holomorphic functional calculus, operator $A = (f^{-1}\circ f)(A)$ can be expressed as an integral
\begin{equation}
	A = \frac{1}{2\pi i}\oint_{\gamma_A}f^{-1}(z)\left(z-f(A)\right)^{-1}dz.
\end{equation}
By Runge's theorem \cite[Theorem 13.9]{Rudin2013}, a holomorphic function $f^{-1}$ may be approximated by a sequence $(r_n)$ of rational functions, $r_n (z) = p_n (z)/q_n(z)$ for $p_n$, $q_n$ polynomials, converging to $f^{-1}$ uniformly on $\overline{E_{A,\epsilon}}$ and it is always possible to choose polynomials $q_n$ that have no zeros in $\overline{E_{A,\epsilon}}$. Functions $r_n$ are therefore holomorphic in this set and in particular, this implies that the operator
\begin{equation}
	q_n (f(A))^{-1} = \frac{1}{2\pi i}\oint_{\gamma_A}\frac{1}{q_n (z)}\left(z-f(A)\right)^{-1}dz
\end{equation}
exists and is bounded, i.e.~$r_n (f(A)) = p_n (f(A))q_{n}(f(A))^{-1}$ is well-defined and $\| r_n (f(A)) - A\|\to 0$ as $n\to\infty$. The same reasoning may be then carried out for $B$ and function $g$, resulting in existence of a sequence $(s_n)$ of rational functions, with poles outside a compact set containing $g(\spec{B})$, and converging to $g^{-1}$ uniformly on this set; in result, a sequence $(s_n (g(B)))$ exists in $B(X)$ and converges to $B$ in norm. From this it is easy to see that
\begin{equation}
	\Delta_n = r_n(f(A))s_n(g(B)) - s_n(g(B))r_n(f(A)) \to \comm{A}{B}
\end{equation}
in norm when $n\to\infty$. However, when it is assumed that $f(A)$ and $g(B)$ commute then also all rational functions of these operators commute, i.e.~$\Delta_n = 0$ and the result follows. 
\end{proof}

\begin{theorem}\label{thm:CommFunctionsOperatorsExp}
Let $A,B \in B(X)$ for a Banach space $X$. If spectrum of $A$ is $2\pi i$--congruence free, i.e.~if no $\lambda_1$, $\lambda_2$ exist in $\spec{A}$ such that $\lambda_1 -\lambda_2 = 2k\pi i$ for some $k\in\integers\setminus\{0\}$, then $\comm{e^A}{B} = 0$ if and only if $\comm{A}{B}=0$.
\end{theorem}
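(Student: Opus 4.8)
The plan is to obtain this as a corollary of Theorem~\ref{thm:CommFunctionsOperators} with the choice $f = \exp$ and $g = \operatorname{id}$, so that $f(A) = e^{A}$ and $g(B) = B$. The ``if'' direction is immediate: the power series defining $e^{A}$ commutes term by term with $B$ as soon as $A$ does. For the ``only if'' direction, the identity map is entire and injective, so the sole hypothesis of Theorem~\ref{thm:CommFunctionsOperators} that requires genuine work is producing an open set $U_A \supseteq \spec{A}$ on which $\exp$ is injective. Since $e^{z_1} = e^{z_2}$ exactly when $z_1 - z_2 \in 2\pi i\,\integers$, injectivity of $\exp$ on $U_A$ is precisely the assertion that $U_A$ is $2\pi i$-congruence free.

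First I would verify that a sufficiently thin open $\epsilon$-neighbourhood $U_\epsilon = \{z \in \complexes : \operatorname{dist}(z,\spec{A}) < \epsilon\}$ of the spectrum inherits $2\pi i$-congruence freedom from $\spec{A}$. The hypothesis says the compact sets $\spec{A}$ and $\spec{A} + 2\pi i k$ are disjoint for every $k \in \integers\setminus\{0\}$, hence $\operatorname{dist}(\spec{A},\spec{A}+2\pi i k) > 0$ for each such $k$; and since $\spec{A}$ is bounded, a translate by $2\pi i k$ with $|k|$ large moves it away by a distance growing without bound, so only finitely many values of $k$ matter and the quantity $2\delta := \inf_{k\neq 0}\operatorname{dist}(\spec{A},\spec{A}+2\pi i k)$ is strictly positive. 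A one-line triangle-inequality estimate then shows that if $z_1,z_2\in U_\epsilon$ satisfy $z_1 - z_2 = 2\pi i k$ with $k\neq 0$, then $\operatorname{dist}(\spec{A},\spec{A}+2\pi i k) < 2\epsilon$; choosing $\epsilon < \delta$ therefore forbids this, so $U_\epsilon$ is $2\pi i$-congruence free and $\exp$ is injective on it.

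With $U_A = U_\epsilon$ and $U_B$ any bounded open neighbourhood of $\spec{B}$, both $f = \exp|_{U_A}$ and $g = \operatorname{id}|_{U_B}$ are holomorphic and injective, so Theorem~\ref{thm:CommFunctionsOperators} applies verbatim and yields $\comm{e^{A}}{B}=0$ if and only if $\comm{A}{B}=0$. I expect the only delicate point to be the uniform positive lower bound $\delta$ on $\operatorname{dist}(\spec{A},\spec{A}+2\pi i k)$ across \emph{all} nonzero $k$: this is exactly where the congruence-free hypothesis does more than guarantee injectivity of $\exp$ on the spectrum alone, and it is what permits fattening $\spec{A}$ into an open set without losing injectivity. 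It rests on both the compactness and the boundedness of $\spec{A}$, features automatic for bounded operators.
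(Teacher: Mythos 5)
Your proposal is correct and follows essentially the same route as the paper's primary proof, which likewise deduces the statement from Theorem~\ref{thm:CommFunctionsOperators} by taking $f=\exp$ and noting that the $2\pi i$-congruence freedom of $\spec{A}$ makes $\exp$ injective on a sufficiently thin open neighbourhood of the spectrum (a fact the paper cites to Wermuth, and which your $\delta$-argument proves in full). The only difference is cosmetic: the paper additionally records a second, alternative proof via a rescaling lemma applied to $\spec{B}$, but your argument matches the first one.
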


\begin{proof}
When $\sigma(A)$ is $2\pi i$--congruence free, the function $f(z) = e^z$ is injective on an open cover
\begin{equation}
	D_\epsilon = \bigcup_{z\in\spec{A}}K_{z,\epsilon}
\end{equation}
of $\spec{A}$, for $\epsilon > 0$ small enough \cite{Wermuth1997}. Hence, the claim follows from Theorem \ref{thm:CommFunctionsOperators}.
\end{proof}

Below we supply the Reader with an alternative proof which employs Wermuth's result in a more direct manner by invoking a simple observation of ``shrinking'', or ``rescaling'' property of bounded subsets of complex plane, stated as a Lemma \ref{lemma:z0congruenceScaling}:

\begin{lemma}\label{lemma:z0congruenceScaling}
For every bounded nonempty set $U\subset\mathbb{C}$ and every $z \in\mathbb{C} \setminus \{0\}$ there exists such $\tau > 0$ small enough, that set $t U = \{t w : w\in U \}$ is $z$--congruence free for every $t \in (0,\tau)$.
\end{lemma}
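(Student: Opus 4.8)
The plan is to reduce the statement to a one-line diameter estimate. First I would introduce $d = \operatorname{diam}(U) = \sup\{|w_1 - w_2| : w_1, w_2 \in U\}$, which is finite because $U$ is bounded. If $d = 0$ then $U$ is a singleton and $tU$ is $z$-congruence free for \emph{every} $t > 0$, since the defining condition is vacuous when there are no two distinct points to compare; so I would set that degenerate case aside and assume $d > 0$. The natural candidate threshold is then $\tau = |z|/d$, which is strictly positive because $z \neq 0$.

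Next I would fix an arbitrary $t \in (0,\tau)$ and argue by contradiction. Suppose $tU$ is not $z$-congruence free, so there exist distinct $z_1, z_2 \in tU$, say $z_j = t w_j$ with $w_j \in U$, and an integer $k$ with $z_1 - z_2 = kz$. Since $z_1 \neq z_2$ we must have $k \neq 0$, hence $|kz| = |k|\,|z| \geq |z|$. On the other hand $|z_1 - z_2| = t\,|w_1 - w_2| \leq t\,d < \tau d = |z|$, which is a contradiction. Therefore $tU$ is $z$-congruence free for every $t \in (0,\tau)$, and the lemma follows.

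I expect no genuine obstacle: the whole content is the elementary observation that scaling by a small factor shrinks all differences of points of $U$ below $|z|$, while every nonzero integer multiple of $z$ has modulus at least $|z|$. The only point requiring a word of care is the degenerate singleton case (equivalently, the case $M := \sup_{w \in U}|w| = 0$), where the conclusion is vacuously true; with $M > 0$ one may alternatively take $\tau = |z|/(2M)$, since then $|w_1 - w_2| \leq 2M$ for all $w_1, w_2 \in U$.
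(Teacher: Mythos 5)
Your proof is correct and follows essentially the same route as the paper's: both set $\tau = |z|/\operatorname{diam}(U)$ when the diameter is positive, observe that scaling by $t<\tau$ forces all differences in $tU$ below $|z|$ while any nonzero integer multiple of $z$ has modulus at least $|z|$, and dispose of the singleton case as vacuous. No gaps.
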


\begin{proof}
Let $U \subset \mathbb{C}$ be nonempty and bounded and let
\begin{equation}
	\Delta = \sup_{z_1, z_2 \in U}{|z_1 - z_2|}
\end{equation}
be its \emph{diameter}. If $\Delta > 0$, define $\tau = \frac{|z|}{\Delta}$. Then, for every $t \in (0, \tau)$ and every pair $z_1, z_2 \in U$ we have $|z_1-z_2| \leqslant \Delta$ which yields
\begin{equation}\label{eq:tZdistance}
	t |z_1-z_2| < \tau \Delta = |z|.
\end{equation}
Let $tU = \{tz : z \in U\}$ and take any $w_1, w_2 \in tU$. As $w_1 = tz_1$ and $w_2 = tz_2$ for some $z_1, z_2 \in U$, equation \eqref{eq:tZdistance} implies 
\begin{equation}
	|w_{1} - w_{2}| = t | z_1 - z_2| < |z|,
\end{equation}
which automatically results in
\begin{equation}
	w_{1} - w_{2} \neq k z, \qquad k \in \mathbb{Z}\setminus \{0 \}
\end{equation}
for every $w_{1}, w_{2} \in t U$, i.e. $w_1 \neq w_2  \, (\mathrm{mod} \, z)$ and $t U$ is $z$--congruence free for any $t \in (0,\tau)$. On the other hand, if $\Delta = 0$, i.e. $U=\{ z_0 \}$ is a singleton, then set $tU$ is automatically $z$--congruence free by definition; then, one can take any $\tau \in (0, \infty )$.
\end{proof}

\begin{proof}[Proof of Theorem~\ref{thm:CommFunctionsOperatorsExp} (alternative)]
Assume $\comm{e^A}{B} = 0$. Then, $e^A$ commutes also with every analytic function of $B$, so in particular $\comm{e^A}{e^{tB}}=0$ for all $t\in\mathbb{R}$. As $\spec{B}$ is a nonempty bounded subset of $\mathbb{C}$, lemma \ref{lemma:z0congruenceScaling} invoked for $U = \spec{B}$ guarantees existence of such $\tau > 0$ that $t \spec{B}$ is $2\pi i$--congruence free for any $t\in(0,\tau )$. Therefore, $tB$ is of $2\pi i$--congruence free spectrum and by virtue of Wermuth's result, $\comm{e^A}{e^{tB}} = 0$ if and only if $\comm{A}{tB} = 0$ for every $t\in (0,\tau)$, so $A$ and $B$ must commute.
\end{proof}

\begin{proposition}\label{prop:AlphaStructure}
For each $g\in\mathcal{G}$,
\begin{enumerate}
	\item matrix $\hat{\alpha}_U(g)$ is unitary and admits a block structure
	\begin{equation}\label{eq:AlphaStructure}
		\hat{\alpha}_U(g) = \left( \begin{array}{ccc} K_{1}(g) & -K_{2}(g) & 0 \\ K_{2}(g) & K_{1}(g) & 0 \\ 0 & 0 & I_n \end{array}\right),
	\end{equation}
	where $I_n \in \matr{n}$ is an identity matrix and $K_1 (g), K_2(g) \in \mathrm{Diag}_{n(n-1)/2}(\complexes)$;
	\item matrix $\hat{\beta}_U(g)$ is unitary and admits a block diagonal structure
	\begin{equation}\label{eq:BetaStructure}
		\hat{\beta}_U(g) = \left( \begin{array}{ccc} R_{1}(g) & 0 & 0 \\ 0 & R_{1}(g) & 0 \\ 0 & 0 & R_2(g) \end{array}\right),
	\end{equation}
	where $R_1(g) \in \mathrm{Diag}_{n(n-1)}(\complexes)$ and $R_2 (g) \in \matr{n}$ is unitary and symmetric.
\end{enumerate}

\end{proposition}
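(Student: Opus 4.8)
The plan is to compute $\hat{\alpha}_{U,V}(g)$ directly from its definition in the two cases that appear here: $V=U$, which produces $\hat{\alpha}_U$, and $V=\overline{U}$, which produces $\hat{\beta}_U$ (recall that for the diagonal unitaries \eqref{eq:UmaxComm} one has $\overline{U}(g)=U(g)^{\hadj}=U(g)^{-1}$), and then to read off the block structure from how the Frobenius basis elements behave under the corresponding conjugation. Unitarity of $\hat{\alpha}_{U,V}(g)$ holds in both cases for the same reason: since $U(g)$ and $V(g)$ are unitary, the linear map $X\mapsto V(g)^{-1}XU(g)$ is an isometry of $(\matr{n},\hsiprod{\cdot}{\cdot})$, so $\{F_i^{\prime}(g)\}$ is again a Frobenius-orthonormal basis and $\hat{\alpha}_{U,V}(g)=[\hsiprod{F_i}{F_j^{\prime}(g)}]$ is the corresponding (unitary) change-of-basis matrix.

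For $\hat{\alpha}_U$ I would start from $U(g)^{\hadj}E_{\mu\nu}U(g)=e^{i(x_\nu-x_\mu)}E_{\mu\nu}$ and substitute into \eqref{eq:FiSymAsym}. Writing $\delta_k=x_{\mu(k)}-x_{\nu(k)}$ and using $e^{\pm i\delta}=\cos\delta\pm i\sin\delta$, the symmetric and antisymmetric elements $F_k^{\mathrm{s}}$, $F_k^{\mathrm{a}}$ attached to the common pair $(\mu(k),\nu(k))$ transform by a planar rotation,
\begin{align}
U(g)^{\hadj}F_k^{\mathrm{s}}U(g) &= \cos\delta_k\,F_k^{\mathrm{s}}+\sin\delta_k\,F_k^{\mathrm{a}}, \\
U(g)^{\hadj}F_k^{\mathrm{a}}U(g) &= -\sin\delta_k\,F_k^{\mathrm{s}}+\cos\delta_k\,F_k^{\mathrm{a}},
\end{align}
while every diagonal $F_i^{\mathrm{d}}$ commutes with $U(g)$ and is therefore fixed. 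Collecting these identities columnwise in the ordering of Section \ref{sec:FrobeniusBasis} yields precisely \eqref{eq:AlphaStructure} with $K_1(g)=\cos\Delta$ and $K_2(g)=\sin\Delta$ both diagonal, $\Delta=\operatorname{diag}\{\delta_k\}$.

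For $\hat{\beta}_U$ the relevant conjugation is $X\mapsto\overline{U}(g)^{-1}XU(g)=U(g)XU(g)$, and now $U(g)E_{\mu\nu}U(g)=e^{i(x_\mu+x_\nu)}E_{\mu\nu}$; the crucial observation is that $E_{\mu\nu}$ and $E_{\nu\mu}$ acquire the \emph{same} phase, so $F_k^{\mathrm{s}}$ and $F_k^{\mathrm{a}}$ are each merely rescaled by $e^{i\theta_k}$ with $\theta_k=x_{\mu(k)}+x_{\nu(k)}$, which produces the two equal diagonal blocks $R_1(g)=e^{i\Theta}$, $\Theta=\operatorname{diag}\{\theta_k\}$. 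On $\mathrm{Diag}_n(\complexes)$ the map $X\mapsto U(g)XU(g)$ restricts to an operator whose matrix in the basis $\{F_i^{\mathrm{d}}\}$ is $R_2(g)$; it is unitary because $X\mapsto U(g)XU(g)$ is an $\hsiprod{\cdot}{\cdot}$-isometry, and it is complex-symmetric because $F_i^{\mathrm{d}}$, $F_j^{\mathrm{d}}$ and $U(g)$ are diagonal, hence mutually commuting, and $F_i^{\mathrm{d}}$ is Hermitian, so cyclicity of the trace gives
\begin{align}
\hsiprod{F_i^{\mathrm{d}}}{U(g)F_j^{\mathrm{d}}U(g)} &= \tr{F_i^{\mathrm{d}}U(g)F_j^{\mathrm{d}}U(g)} = \tr{F_j^{\mathrm{d}}U(g)F_i^{\mathrm{d}}U(g)} \\
&= \hsiprod{F_j^{\mathrm{d}}}{U(g)F_i^{\mathrm{d}}U(g)}.
\end{align}
This establishes \eqref{eq:BetaStructure}. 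None of the steps is genuinely hard; the only two points that demand care are the compatibility of the pair-enumeration for the symmetric and antisymmetric sub-bases — so that $K_1$ and $K_2$ come out literally diagonal rather than diagonal up to a common permutation — and the complex-symmetry of $R_2(g)$, which is exactly the trace identity displayed above.
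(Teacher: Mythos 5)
Your proposal is correct and follows essentially the same route as the paper's proof: explicit computation of how the symmetric, antisymmetric and diagonal Frobenius basis elements transform under $X\mapsto U(g)^{\hadj}XU(g)$ and $X\mapsto U(g)XU(g)$, with the block structure read off from the orthogonality of the three subspaces and the symmetry of $R_2(g)$ obtained from the same trace identity. Your uniform unitarity argument (the conjugation is a Hilbert--Schmidt isometry, so $\{F_i^{\prime}(g)\}$ is again orthonormal and the overlap matrix is unitary) is slightly cleaner than the paper's direct verification of $M_{33}^{\hadj}M_{33}=I$, but this is a cosmetic difference.
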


\begin{proof}
Due to mutual orthogonality of subspaces $\mathrm{Sym}_{n}(\complexes)$, $\mathrm{Asym}_{n}(\complexes)$ and $\mathrm{Diag}_{n}(\complexes)$, matrix $\hat{\alpha}_U(g)$ will admit a natural block structure, $\hat{\alpha}_U(g) = [M_{ij}]_{i,j=1}^{3}$, where every block $M_{ij}$ corresponds to a certain pair of subspaces: for example, $M_{11} = [\hsiprod{F_{i}^{\mathrm{s}}}{{F_{j}^{\mathrm{s}}}^{\prime}}]$, $M_{12} = [\hsiprod{F_{i}^{\mathrm{s}}}{{F_{i}^{\mathrm{a}}}^{\prime}}]$ and so on. Matrices $F_{i}^{\mathrm{s}}$ and $F_{i}^{\mathrm{a}}$ come in pairs labeled by the same index $i$ running from $1$ up to $\frac{1}{2}n(n-1)$, which is uniquely determined by the same pair $(\mu,\nu)$ of indices appearing in \eqref{eq:FiSymAsym}. One easily checks that map $\mathrm{Ad}_{U(g)}^{-1}$ leaves $F_{i}^{\mathrm{d}}$ unchanged, and so blocks $M_{31}$, $M_{32}$, $M_{13}$ and $M_{23}$ vanish by orthogonality of subspaces and $M_{33} = I_n$. The remaining non-zero blocks $M_{ij}$ are determined by simply calculating appropriate inner products. With simple algebra involving property $E_{ij}E_{kl} = \delta_{jk} E_{il}$ one verifies
\begin{subequations}
	\begin{equation}
		\mathrm{Ad}_{U(g)}^{-1}(F_{i}^{\mathrm{s}}) = \cos{(x_\mu - x_\nu)} F_{i}^{\mathrm{s}} + \sin{(x_\mu - x_\nu)} F_{i}^{\mathrm{a}},
	\end{equation}
	\begin{equation}
		\mathrm{Ad}_{U(g)}^{-1}(F_{i}^{\mathrm{a}}) = -\sin{(x_\mu - x_\nu)} F_{i}^{\mathrm{s}} + \cos{(x_\mu - x_\nu)} F_{i}^{\mathrm{a}},
	\end{equation}
\end{subequations}
i.e.~$\mathrm{Ad}_{U(g)}^{-1}$ is effectively the rotation by angle $x_\mu - x_\nu$. This naturally leads to
\begin{subequations}
\begin{equation}
		\hsiprod{F_{i}^{\mathrm{s}}}{{F_{j}^{\mathrm{s}}}^{\prime}} = \hsiprod{F_{i}^{\mathrm{a}}}{{F_{j}^{\mathrm{a}}}^{\prime}} = \cos{(x_\mu - x_\nu)}\delta_{ij},
\end{equation}
\begin{equation}
	\hsiprod{F_{i}^{\mathrm{s}}}{{F_{j}^{\mathrm{a}}}^{\prime}} = -\hsiprod{F_{i}^{\mathrm{a}}}{{F_{j}^{\mathrm{s}}}^{\prime}} -\sin{(x_\mu - x_\nu)} \delta_{ij},
\end{equation}
\end{subequations}
here with $j$ uniquely determining $(\mu,\nu)$. Putting this together we have
\begin{equation}
	M_{11} = M_{22} = K_{1}(g) = \cos{\Delta}, \quad M_{21} = -M_{12} = K_2 (g) = \sin{\Delta}
\end{equation}
where $\Delta = \operatorname{diag}{\{x_{\mu(k)}-x_{\nu(k)}\}}$, $1\leqslant k \leqslant \frac{1}{2}n(n-1)$; here, notation $\mu(k)$, $\nu(k)$ indicates the bijection $k\mapsto (\mu,\nu)$. This indeed shows that $\hat{\alpha}_U(g)$ has the claimed structure \eqref{eq:AlphaStructure}.

For $\hat{\beta}_U(g) = [\hsiprod{F_i}{F_{j}^{\prime}}]$, $F_{j}^{\prime} = U(g)F_j U(g)$, let us introduce a similar block structure $\hat{\beta}_U(g) = [M_{ij}]$, with blocks $M_{ij}$ of the same size as in $\hat{\alpha}_U(g)$. Note that a mapping $A\mapsto U(g) A U(g)$ does not alter symmetry of $A$ since $U(g)$ is diagonal, i.e.~it has $\operatorname{Sym}_{n}{(\complexes)}$ and $\operatorname{Asym}_{n}{(\complexes)}$ as invariant subspaces. This implies that when $F_i$ and $F_j$ are of different symmetry, we have $\hsiprod{F_i}{U(g)F_j U(g)} = 0$ and immediately $M_{ij}$ vanishes when $i\neq j$. When $F_i$ and $F_j$ are chosen both symmetric or antisymmetric, one confirms with direct computation that
\begin{equation}
	\hsiprod{F_j}{F_{k}^{\prime}} = e^{i(x_{\mu(j)}+x_{\nu(j)})}\delta_{jk},
\end{equation}
i.e.~blocks $M_{11}$ and $M_{22}$ are diagonal, equal and populated with phase factors of a form $e^{i(x_{\mu(j)}+x_{\nu(j)})}$, $1\leqslant j \leqslant \frac{1}{2}n(n-1)$; hence, they are unitary. When $F_i$, $F_j$ are both diagonal, we have
\begin{equation}
	\hsiprod{F_{i}^{\mathrm{d}}}{{F_{i}^{\mathrm{d}}}^{\prime}} = \tr{F_{i}^{\mathrm{d}}U(g)F_{j}^{\mathrm{d}}U(g)} = \tr{F_{j}^{\mathrm{d}} U(g) F_{i}^{\mathrm{d}} U(g)} = \hsiprod{F_{j}^{\mathrm{d}}}{{F_{i}^{\mathrm{d}}}^{\prime}}
\end{equation}
by Hermiticity of Frobenius basis and cyclic property of trace, and so the remaining block $M_{33}$ is symmetric. Unitarity then follows from simple computation,
\begin{align}
	(M_{33}^{\hadj}M_{33})_{ij} &= \sum_{k=1}^{n} \overline{\hsiprod{F_k}{F_{i}^{\prime}}} \hsiprod{F_k}{F_{j}^{\prime}} = \hsiprod{\sum_{k=1}^{n}\hsiprod{F_k}{F_{i}^{\prime}}F_k}{F_{j}^{\prime}} \\
	&= \hsiprod{\proj{\mathrm{d.}}(F_{i}^{\prime})}{F_{j}^{\prime}} = \hsiprod{F_i}{F_j} = \delta_{ij},\nonumber
\end{align}
since $\proj{\mathrm{d.}}$, the projection onto $\mathrm{Diag}_{n}(\complexes)$, acts trivially on diagonal matrices. In result, $\hat{\beta}_U(g)$ is unitary and of the claimed structure \eqref{eq:BetaStructure}.
\end{proof}

\bibliographystyle{unsrt} 
\bibliography{CovariantDecomposableMapsBib}

\end{document}